\documentclass[11pt]{article}
\usepackage{graphicx,indentfirst}
\usepackage{amscd}

\usepackage[all]{xy}
\usepackage{amsmath}
\usepackage{amsfonts,amssymb}
\usepackage{extarrows}
\usepackage{amsthm}
\usepackage{latexsym}
\usepackage[american]{babel}
\usepackage{times}
\usepackage{tikz}
\usepackage{tikz-cd}
\usepackage{url}
\usepackage{comment}
\usepackage[affil-it]{authblk}

\usetikzlibrary{matrix,arrows,decorations.pathmorphing}
\usepackage[a4paper,left=2.5cm,right=2.5cm,bottom=3cm,top=3.5cm]{geometry}

\makeatletter
\newcommand*{\relrelbarsep}{.386ex}
\newcommand*{\relrelbar}{%
  \mathrel{%
    \mathpalette\@relrelbar\relrelbarsep
  }%
}
\newcommand*{\@relrelbar}[2]{%
  \raise#2\hbox to 0pt{$\m@th#1\relbar$\hss}%
  \lower#2\hbox{$\m@th#1\relbar$}%
}
\providecommand*{\rightrightarrowsfill@}{%
  \arrowfill@\relrelbar\relrelbar\rightrightarrows
}
\providecommand*{\leftleftarrowsfill@}{%
  \arrowfill@\leftleftarrows\relrelbar\relrelbar
}
\providecommand*{\xrightrightarrows}[2][]{%
  \ext@arrow 0359\rightrightarrowsfill@{#1}{#2}%
}
\providecommand*{\xleftleftarrows}[2][]{%
  \ext@arrow 3095\leftleftarrowsfill@{#1}{#2}%
}

\makeatother

\RequirePackage[T1]{fontenc}

\begin{document}

\title{The descent of twisted perfect complexes   on a space with soft structure sheaf}

\author{Zhaoting Wei%
 \thanks{Email: \texttt{zwei3@kent.edu}}}
\affil{Kent State University at Geauga\\14111 Claridon Troy Road, Burton, OH 44021, USA}

\maketitle

\newtheorem{thm}{Theorem}[section]
\newtheorem{lemma}[thm]{Lemma}
\newtheorem{prop}[thm]{Proposition}
\newtheorem{coro}[thm]{Corollary}
\newtheorem{ques}[thm]{Question}
\newtheorem{conj}[thm]{Conjecture}
\theoremstyle{definition}\newtheorem{defi}{Definition}[section]
\theoremstyle{remark}\newtheorem{eg}{Example}
\theoremstyle{remark}\newtheorem{rmk}{Remark}
\theoremstyle{remark}\newtheorem{ctn}{Caution}
\renewcommand\appendixname{Appendix}


\begin{abstract}
In this paper we study the dg-category of twisted perfect complexes on a ringed space with soft structure sheaf. We prove that this dg-category is quasi-equivalent to the dg-category of complexes of vector bundles on that space. This result could be considered as a dg-enhancement of the classic result on soft sheaves in SGA6.
\end{abstract}

\section{Introduction}

The derived categories of perfect complexes  on ringed topoi were introduced in SGA 6 \cite{berthelot1966seminaire} in the 1960's. They have played an important role in mathematics ever since.

It is well-known that on a projective scheme $X$, any perfect complex of sheaves has a global resolution, i.e. it is quasi-isomorphic to a bounded complex of vector bundles. On the other hand, people also know that such resolutions do not exist on general ringed spaces $(X,\mathcal{A}_X)$. Nevertheless, if the structure sheaf $\mathcal{A}_X$ is \emph{soft}, global resolution for a perfect complex always exists. More precisely we have the following theorem.

 \begin{thm}[\cite{berthelot1966seminaire}, Expos\'{e} II, Proposition 2.3.2, See also \cite{block2010duality} Proposition 4.2]\label{thm: local perfect is global perfect}
 Suppose $(X, \mathcal{A}_X)$ is a ringed space, where $X$ is compact and $\mathcal{A}_X$ is a soft sheaf of rings. Then
 \begin{enumerate}
 \item The global sections functor
 $$
 \Gamma: \text{Mod}-\mathcal{A}_X\rightarrow \text{Mod}-\mathcal{A}_X(X)
 $$
 is exact and establishes and equivalence of categories between the category of sheaves of right $\mathcal{A}_X$-modules and the category of right modules over the global sections $\mathcal{A}_X(X)$.

 \item If $S\in \text{Mod}-\mathcal{A}_X$ locally has finite resolutions by finitely generated free $\mathcal{A}_X$-modules, then $\Gamma(X, S)$ has a finite resolution by finitely generated projectives.

 \item The derived category of perfect complexes of sheaves $D_{\text{perf}}(\text{Mod}-\mathcal{A}_X)$ is equivalent to the derived category of perfect complexes of modules $D_{\text{perf}}(\text{Mod}-\mathcal{A}_X(X))$.
 \end{enumerate}
  \end{thm}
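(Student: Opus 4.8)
This statement is quoted from \cite{berthelot1966seminaire} and \cite{block2010duality}, so I only indicate the line of argument. Everything rests on two standard properties of a soft sheaf of rings $\mathcal{A}_X$ on the compact Hausdorff space $X$: (a) every sheaf of $\mathcal{A}_X$-modules is again soft, hence acyclic for $\Gamma(X,-)$; and (b) softness provides partitions of unity — for a finite open cover $\{U_i\}$ of $X$ there exist $\phi_i\in\mathcal{A}_X(X)$ with $\operatorname{supp}\phi_i\subseteq U_i$ and $\sum_i\phi_i=1$, and for a closed set $C$ contained in an open set $U$ there is a global section equal to $1$ on a neighborhood of $C$ and supported in $U$ (a ``bump section'').

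For (1), exactness of $\Gamma$ is immediate from (a): the connecting map of a short exact sequence of $\mathcal{A}_X$-modules lies in a first cohomology group of a module, which vanishes. For the equivalence I would exhibit the candidate quasi-inverse $L(M)=\bigl(\mathcal{A}_X\otimes_{\mathcal{A}_X(X)}M\bigr)^{\mathrm{sh}}$, the sheafification of the presheaf tensor product; it is left adjoint to $\Gamma$ and right exact. First I would check $\Gamma\circ L\cong\mathrm{id}$ for free modules — using that on a quasi-compact space $\Gamma(X,-)$ commutes with arbitrary direct sums — and then for arbitrary $M$ by taking a free presentation $A^{(J)}\to A^{(I)}\to M\to 0$ and applying the right-exact functor $\Gamma\circ L$ together with exactness of $\Gamma$. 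The other composite I would treat stalkwise: the stalk of $L\Gamma(S)$ at $x$ is $\mathcal{A}_{X,x}\otimes_{\mathcal{A}_X(X)}\Gamma(X,S)$, and both surjectivity and injectivity of the counit $L\Gamma(S)_x\to S_x$ follow from a bump section $\rho$ equal to $1$ near $x$ and supported in a small neighborhood: $\rho$ lifts a germ at $x$ to a global section (surjectivity), while multiplication by $\rho$ annihilates a global section vanishing near $x$ without altering its germ (injectivity). This verification is the technical core of the theorem, and I expect it to be the main obstacle; the remaining parts are bookkeeping with softness.

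For (2) I would induct on the largest length $n$ of a local finite free resolution of $S$, which is finite because $X$ is compact. A partition-of-unity argument first shows $M:=\Gamma(X,S)$ is finitely generated over $A:=\mathcal{A}_X(X)$: extend local generators to global sections using softness and patch with a subordinate partition of unity. Choosing a surjection $\mathcal{A}_X^N\twoheadrightarrow S$ with kernel $\mathcal{K}$, part (1) turns it into a short exact sequence $0\to\Gamma(X,\mathcal{K})\to A^N\to M\to 0$ with $A^N$ finitely generated free; a Schanuel-type argument shows that $\mathcal{K}$ again locally admits finite free resolutions, of length at most $n-1$, so by induction $\Gamma(X,\mathcal{K})$ has a finite resolution by finitely generated projectives, and splicing gives one for $M$. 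The base case $n=0$ is the Serre--Swan statement that over a soft structure sheaf on a compact space a finitely generated locally free module is a direct summand of a free module — again a partition-of-unity argument. Finally, for (3) I would note that the exact equivalence of (1) induces an equivalence $D(\text{Mod}-\mathcal{A}_X)\simeq D(\text{Mod}-A)$ of derived categories, under which $R\Gamma$ is computed by applying $\Gamma$ termwise since every module is acyclic; it then suffices to match perfect objects on the two sides, which follows from (2) because $L$ carries finitely generated free $A$-modules to finitely generated free $\mathcal{A}_X$-modules and $\Gamma$ carries a perfect complex of sheaves to a complex of $A$-modules whose cohomology modules have finite projective resolutions.
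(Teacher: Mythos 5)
This theorem is given in the paper purely as a citation of SGA~6 (Exp.~II, Prop.~2.3.2) and of \cite{block2010duality} (Prop.~4.2); the paper supplies no proof of its own, so there is nothing internal to compare your outline against. Your sketch faithfully reconstructs the standard argument of those references: exactness of $\Gamma$ from acyclicity of soft $\mathcal{A}_X$-modules, the sheafified tensor-up functor $L(M)=(\mathcal{A}_X\otimes_{\mathcal{A}_X(X)}M)^{\mathrm{sh}}$ as quasi-inverse (first on free modules, using that $\Gamma$ commutes with direct sums on a quasi-compact space, then by right-exactness and free presentations), the bump-section verification of the counit stalkwise, the Serre--Swan base case plus Schanuel descent for part (2), and transport of perfect objects through the exact equivalence for part (3). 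Two small points to be aware of if you were to fill in the details. First, you quietly strengthen the hypothesis to compact \emph{Hausdorff}; the cited sources do assume this (it is what makes paracompactness and the usual softness machinery available), but the statement as quoted here says only ``compact.'' Second, in the Schanuel step you claim $\mathcal{K}$ again locally has finite \emph{free} resolutions of length $\le n-1$, but Schanuel's lemma only shows $\mathcal{K}\oplus F_0$ has one, i.e.\ $\mathcal{K}$ locally has a finite \emph{projective} resolution of that length; to keep the inductive hypothesis about free resolutions you either invoke that finitely generated projectives over the stalk rings are free (fine over local rings, hence for a locally ringed $(X,\mathcal{A}_X)$, which is tacit in the sources), or, more simply, carry projective resolutions through the induction, since the conclusion of (2) asks only for projectives.
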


On the other hand,  Toledo and Tong \cite{toledo1978duality} introduced  \emph{twisted complexes} in the 1970's as a way to obtain global resolutions of perfect complexes of sheaves on a complex manifold. In 2015 Wei proved in \cite{wei2016twisted} that the dg-category of twisted perfect complexes give a dg-enhancement of the derived category of perfect complexes.

  \begin{thm}[\cite{wei2016twisted}, Theorem 3.32]\label{thm: twisted complexes gives an dg-enhancement}
  Let $(X,\mathcal{A}_X)$ be a ringed space with soft structure ring and $\{U_i\}$ be any open cover of $X$. Let us denote the twisted perfect complex on the ringed space $(X, \mathcal{A}_X)$ which subject to the open cover $\{U_i\}$ by $\text{Tw}_{\text{perf}}(X, \mathcal{A}_X, U_i)$. Then $\text{Tw}_{\text{perf}}(X, \mathcal{A}_X, U_i)$  is a dg-enhancement of $D_{\text{perf}}(\text{Mod}-\mathcal{A}_X)$.
  \end{thm}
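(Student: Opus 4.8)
The plan is to realize $\text{Tw}_{\text{perf}}(X,\mathcal{A}_X,U_i)$ as a pretriangulated dg-category and to construct a \emph{realization} functor whose descent to homotopy categories is a triangulated equivalence onto $D_{\text{perf}}(\text{Mod}-\mathcal{A}_X)$. That $\text{Tw}_{\text{perf}}(X,\mathcal{A}_X,U_i)$ is pretriangulated is checked directly: the shift of a twisted perfect complex and the cone of a closed degree-zero morphism of twisted perfect complexes are again twisted perfect complexes via the usual cone and shift formulas for twisted complexes, so $H^0(\text{Tw}_{\text{perf}}(X,\mathcal{A}_X,U_i))$ is triangulated. For the realization functor, I would send a twisted perfect complex $\mathcal{E}=(E_i^{\bullet},a_{i_0\cdots i_k})$, with each $E_i^{\bullet}$ a finite complex of vector bundles on $U_i$, to the total complex $\operatorname{real}(\mathcal{E})$ of the \v{C}ech-type bicomplex $\bigoplus_{k\ge 0}\bigoplus_{i_0<\cdots<i_k}(\iota_{i_0\cdots i_k})_{*}\bigl(\mathcal{G}^{\bullet}(E_{i_k}^{\bullet})\big|_{U_{i_0\cdots i_k}}\bigr)$, where $\iota_{i_0\cdots i_k}\colon U_{i_0\cdots i_k}\hookrightarrow X$ is the inclusion and $\mathcal{G}^{\bullet}$ a fixed flasque (Godement) resolution, with total differential assembled from the \v{C}ech differential, the internal and resolution differentials, and the twisting maps $a_{i_0\cdots i_k}$. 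One checks that closed degree-zero morphisms go to chain maps, chain-homotopic morphisms go to chain-homotopic maps, and shifts and cones are preserved, so $\operatorname{real}$ descends to a triangulated functor $H^0(\text{Tw}_{\text{perf}}(X,\mathcal{A}_X,U_i))\to D_{\text{perf}}(\text{Mod}-\mathcal{A}_X)$, landing in $D_{\text{perf}}$ because perfectness is local and each $E_i^{\bullet}$ is a finite complex of vector bundles.

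Next I would prove essential surjectivity. Given a perfect complex $\mathcal{S}^{\bullet}$, after passing to a refinement of $\{U_i\}$ — which changes $\text{Tw}_{\text{perf}}$ only up to quasi-equivalence, a fact worth isolating as a separate lemma on refinements — we may assume that on each $U_i$ there is a quasi-isomorphism $\phi_i\colon E_i^{\bullet}\xrightarrow{\sim}\mathcal{S}^{\bullet}\big|_{U_i}$ from a finite complex of vector bundles. On $U_{ij}$ both $E_i^{\bullet}$ and $E_j^{\bullet}$ resolve $\mathcal{S}^{\bullet}\big|_{U_{ij}}$, and since $E_j^{\bullet}$ is a bounded complex of vector bundles the complex $\mathcal{H}om^{\bullet}(E_j^{\bullet},E_i^{\bullet})(U_{ij})$ computes the derived hom, so there is a degree-zero $a_{ij}$ with $\phi_i\circ a_{ij}\simeq\phi_j$; proceeding inductively over the higher multiple intersections with the same input, one chooses the higher coherence data $a_{i_0\cdots i_k}$ with no obstruction, since at each stage one need only exhibit a lift, not a canonical one. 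This assembles a twisted perfect complex $\mathcal{E}$ together with a quasi-isomorphism $\operatorname{real}(\mathcal{E})\simeq\mathcal{S}^{\bullet}$. Alternatively, using Theorem~\ref{thm: local perfect is global perfect}(3) one may first replace $\mathcal{S}^{\bullet}$ by the sheaf associated to a bounded complex of finitely generated projective $\mathcal{A}_X(X)$-modules, which is already a twisted perfect complex for the one-element cover, and then transport it along a common refinement.

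The main obstacle is full faithfulness of $\operatorname{real}$ on $H^0$, equivalently the identity $H^n\bigl(\operatorname{Hom}_{\text{Tw}}(\mathcal{E},\mathcal{F})\bigr)\cong\operatorname{Hom}_{D_{\text{perf}}(\text{Mod}-\mathcal{A}_X)}\bigl(\operatorname{real}(\mathcal{E}),\operatorname{real}(\mathcal{F})[n]\bigr)$ for all twisted perfect complexes $\mathcal{E}=(E_i^{\bullet},a)$, $\mathcal{F}=(F_i^{\bullet},b)$ and all $n$. By construction $\operatorname{Hom}_{\text{Tw}}(\mathcal{E},\mathcal{F})$ is the total complex of the \v{C}ech-type bicomplex $\bigoplus_{k}\bigoplus_{i_0<\cdots<i_k}\mathcal{H}om^{\bullet}(E_{i_k}^{\bullet},F_{i_0}^{\bullet})(U_{i_0\cdots i_k})$ with differential $\check{\delta}+d_{\mathrm{int}}+[a,-]\pm[b,-]$, while the right-hand side equals the hypercohomology $\mathbb{H}^{n}\bigl(X,R\mathcal{H}om(\operatorname{real}(\mathcal{E}),\operatorname{real}(\mathcal{F}))\bigr)$. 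I would compare the two through the filtration by \v{C}ech degree, whose $E_1$-page is built from the cohomology of the sheaves $\mathcal{H}om^{\bullet}(E_{i_k}^{\bullet},F_{i_0}^{\bullet})$ over the intersections $U_{i_0\cdots i_k}$; this is exactly where softness enters, for over the soft sheaf of rings $\mathcal{A}_X$ every sheaf of modules — in particular every finite complex of vector bundles and every such internal Hom — is soft, hence acyclic for the functors used to pass from the \v{C}ech data to the hypercohomology, so the \v{C}ech bicomplex already computes the latter. The comparison map (restriction to \v{C}ech degree zero followed by the augmentation, or equivalently a zig-zag through the flasque resolution) is then a quasi-isomorphism. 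Taking $n=0$ yields full faithfulness on $H^0$; together with essential surjectivity and triangulatedness this shows $\text{Tw}_{\text{perf}}(X,\mathcal{A}_X,U_i)$ is a dg-enhancement of $D_{\text{perf}}(\text{Mod}-\mathcal{A}_X)$. The technical heart is the sign bookkeeping in the total differential together with the careful verification of the acyclicity statements on the (not necessarily compact) intersections — which is where the softness of $\mathcal{A}_X$ and the compactness of $X$ must really be used.
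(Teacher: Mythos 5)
This theorem is cited from \cite{wei2016twisted} (Theorem~3.32) and re-stated in the present paper as Theorem~\ref{thm: twisted complexes gives an dg-enhancement in general}; it is not proved here, so the comparison is against the source. That said, your ``realization'' functor is precisely the sheafification functor $\mathcal{S}$ appearing in the paper's restatement, and your overall plan (pretriangulatedness of $\text{Tw}_{\text{perf}}$, a triangulated functor to $D_{\text{perf}}$, essential surjectivity, full faithfulness via a \v{C}ech-to-hypercohomology comparison driven by softness) is the intended route, so the strategy is not genuinely different.

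A few places need tightening. First, the refinement step in your essential-surjectivity argument is both unjustified and unnecessary: the theorem is stated for \emph{any} open cover, so you are not free to refine; and with $\mathcal{A}_X$ soft the cover is automatically p-good in the sense of Definition~\ref{defi: good space}, so each $U_i$ already carries a strictly perfect representative of $\mathcal{S}^{\bullet}|_{U_i}$ without refining. The cleaner route you mention via Theorem~\ref{thm: local perfect is global perfect}(3) has the opposite problem — it silently presupposes that the one-element cover and $\{U_i\}$ give quasi-equivalent $\text{Tw}_{\text{perf}}$'s, which is exactly the content of Theorem~\ref{thm: quasi-equivalence of bundles and twisted complexes} of the present paper and cannot be assumed in a proof of the cited result. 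Second, you invoke ``compactness of $X$'' at the end, but compactness is not among the hypotheses of this theorem (it enters only in the later Theorem~\ref{thm: quasi-equivalence of bundles and twisted complexes}, where the cover is required to be finite so that the big direct sums stay finitely generated); your spectral-sequence argument must work without it. Third, the central acyclicity claim — that the sheaves $\mathcal{H}om^{\bullet}(E_{i_k},F_{i_0})$ are $\Gamma$-acyclic over the open sets $U_{i_0\cdots i_k}$ — is the real crux and deserves more than a parenthetical: you need that modules over the soft ring sheaf are fine hence soft (Proposition~\ref{prop: sheaf of modules of soft rings is fine}) \emph{and} that softness survives restriction to the open intersections, which is not formal for arbitrary open subsets of a paracompact space and should be addressed (it is exactly the paper's p-goodness hypothesis, condition~2 of Definition~\ref{defi: good space}, which the accompanying remark asserts holds in the soft case). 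With those points repaired, your argument is the standard one.
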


A detailed discussion of twisted complexes will be given in Section \ref{section: review of twisted complex}.

In this paper we study twisted perfect complexes on a ringed space with soft structure sheaf and we want to achieve a generalization of Theorem \ref{thm: local perfect is global perfect} to the dg-world. Let us denote the dg-category of bounded complexes of global sections of finitely generated locally free sheaves on $X$ by $\mathfrak{L}(X,\mathcal{A})$ and it is easy to show that $\mathfrak{L}(X,\mathcal{A})$ is an dg-enhancement of $D_{\text{perf}}(\text{Mod}-\mathcal{A}_X(X))$. In conclusion we have
$$
\begin{CD}
\text{Tw}_{\text{perf}}(X, \mathcal{A}_X, U_i)  @. \mathfrak{L}(X,\mathcal{A}_X)\\
@VV\text{dg-enhance}V @VV\text{dg-enhance}V\\
D_{\text{perf}}(\text{Mod}-\mathcal{A}_X) @>\sim>> D_{\text{perf}}(\text{Mod}-\mathcal{A}_X(X)).
\end{CD}
$$
Hence we expect that $\text{Tw}_{\text{perf}}(X, \mathcal{A}_X, U_i)$ and $\mathfrak{L}(X,\mathcal{A})$ are quasi-equivalent too. Actually this is the mean result of this paper.

   \begin{thm}[See Theorem \ref{thm: quasi-equivalence of bundles and twisted complexes} below]\label{thm: quasi-equivalence of bundles and twisted complexes in introduction}
 Let $X$ is a paracompact space with a soft structure sheaf $\mathcal{A}$. Moreover if the open cover $U_i$ is finite and good, then the dg-category $\mathfrak{L}(X,\mathcal{A})$ is quasi-equivalent to the dg-category of twisted complexes $\text{Tw}_{\text{perf}}(X, \mathcal{A}_X, U_i)$.
 \end{thm}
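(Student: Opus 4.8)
The plan is to construct an explicit dg-functor
$$
\Phi\colon \mathfrak{L}(X,\mathcal{A}_X)\longrightarrow \text{Tw}_{\text{perf}}(X,\mathcal{A}_X,U_i)
$$
and to prove that it is a quasi-equivalence, i.e. that it induces quasi-isomorphisms on all Hom-complexes and is essentially surjective on homotopy categories. Since by Theorem \ref{thm: local perfect is global perfect}(1) the global sections functor $\Gamma$ is an exact equivalence between sheaves of $\mathcal{A}_X$-modules and $\mathcal{A}_X(X)$-modules, which by Theorem \ref{thm: local perfect is global perfect}(2) matches up finitely generated locally free sheaves with finitely generated projective modules, I may fix a quasi-inverse (e.g. $M\mapsto \mathcal{A}_X\otimes_{\mathcal{A}_X(X)}M$) and thereby identify an object of $\mathfrak{L}(X,\mathcal{A}_X)$ with a bounded complex $(E^\bullet,d_E)$ of finitely generated locally free sheaves on $X$. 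I would then define $\Phi(E^\bullet)$ to be the \emph{trivial} twisted complex associated to $E^\bullet$ and the cover $\{U_i\}$: its $i$-th component is $E^\bullet|_{U_i}$, the internal differentials are the $d_E|_{U_i}$, all transition cochains are the identity, and all higher components of the twisting cochain vanish; the Maurer--Cartan equation then holds trivially. On morphisms $\Phi$ simply restricts a global map to the members of the cover. This is visibly a dg-functor.

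The next step is to identify the two Hom-complexes. Writing $\mathcal{C}^\bullet:=\mathcal{H}om^\bullet_{\mathcal{A}_X}(E^\bullet,F^\bullet)$ for the internal-Hom complex of sheaves, Theorem \ref{thm: local perfect is global perfect}(1) gives $\text{Hom}_{\mathfrak{L}(X,\mathcal{A}_X)}(E^\bullet,F^\bullet)\cong \Gamma(X,\mathcal{C}^\bullet)$, while a direct inspection of the morphism complex in $\text{Tw}_{\text{perf}}$ --- using that the transition cochains of $\Phi E^\bullet$ and $\Phi F^\bullet$ are identities, so that the twisting contributions in the differential collapse precisely to the \v{C}ech differential together with the internal differential of $\mathcal{C}^\bullet$ --- yields $\text{Hom}_{\text{Tw}_{\text{perf}}}(\Phi E^\bullet,\Phi F^\bullet)\cong \text{Tot}\,\check{C}^\bullet(\{U_i\},\mathcal{C}^\bullet)$, the total complex of the \v{C}ech double complex (a finite direct sum, since the cover is finite). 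Under these identifications the chain map induced by $\Phi$ is exactly the canonical augmentation $\Gamma(X,\mathcal{C}^\bullet)\to \text{Tot}\,\check{C}^\bullet(\{U_i\},\mathcal{C}^\bullet)$.

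The heart of the argument --- and the step I expect to be the main obstacle --- is to show this augmentation is a quasi-isomorphism, and this is where the softness hypothesis is used. Each sheaf $\mathcal{C}^n=\prod_q\mathcal{H}om_{\mathcal{A}_X}(E^q,F^{q+n})$ is a sheaf of $\mathcal{A}_X$-modules, and since $\mathcal{A}_X$ is a soft sheaf of rings on the paracompact space $X$, every $\mathcal{A}_X$-module is soft; moreover a soft sheaf of $\mathcal{A}_X$-modules is \v{C}ech-acyclic for the finite (hence locally finite) cover $\{U_i\}$, because a partition of unity subordinate to $\{U_i\}$ with values in $\mathcal{A}_X$ (available by softness and paracompactness) supplies an explicit contracting homotopy for the augmented \v{C}ech complex. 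Hence $\check{H}^p(\{U_i\},\mathcal{C}^n)=0$ for all $p>0$ and all $n$, so the spectral sequence of the double complex $\check{C}^p(\{U_i\},\mathcal{C}^q)$ --- which converges because the cover is finite --- degenerates at $E_2$ onto $H^\bullet\!\big(\Gamma(X,\mathcal{C}^\bullet)\big)$, and the edge map is precisely the augmentation above. Thus $\Phi$ is quasi-fully faithful. The delicate points here are verifying that softness of $\mathcal{A}_X$-modules and the partition-of-unity homotopy behave well under restriction to the multiple intersections $U_{i_0\cdots i_p}$, and the sign bookkeeping in the collapse of the twisting cochains; both are routine but need care.

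Finally, for essential surjectivity on homotopy categories, let $\mathcal{T}$ be a twisted perfect complex. By Theorem \ref{thm: twisted complexes gives an dg-enhancement} it corresponds, under the equivalence $H^0(\text{Tw}_{\text{perf}})\simeq D_{\text{perf}}(\text{Mod}-\mathcal{A}_X)$, to a perfect complex of sheaves $\mathcal{S}^\bullet$. By Theorem \ref{thm: local perfect is global perfect}(2),(3) --- equivalently, by the standard fact that a perfect complex of $\mathcal{A}_X(X)$-modules is quasi-isomorphic to a bounded complex of finitely generated projectives, transported through $\Gamma$ --- the complex $\mathcal{S}^\bullet$ is quasi-isomorphic to a bounded complex $\mathcal{V}^\bullet$ of finitely generated locally free sheaves. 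Then $\Gamma(X,\mathcal{V}^\bullet)$ is an object of $\mathfrak{L}(X,\mathcal{A}_X)$, and tracing through the construction of the equivalence in Theorem \ref{thm: twisted complexes gives an dg-enhancement} (the trivial twisted complex $\Phi\mathcal{V}^\bullet$ maps to $\mathcal{V}^\bullet$ in $D_{\text{perf}}$) one obtains $\Phi\big(\Gamma(X,\mathcal{V}^\bullet)\big)\cong \mathcal{T}$ in $H^0(\text{Tw}_{\text{perf}})$. Combined with quasi-full-faithfulness, this shows $\Phi$ is a quasi-equivalence.
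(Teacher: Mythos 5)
Your functor $\Phi$ is exactly the paper's twisting functor $\mathcal{T}$ (Definition \ref{defi: twisted functor}), and your quasi-full-faithfulness argument rests on the same underlying mechanism as the paper's Propositions \ref{prop: surjective of morphisms} and \ref{prop: injective of morphisms}, just packaged differently: the paper writes out the partition-of-unity contracting homotopy explicitly (the formula $\varphi^{k,n-1-k}_{i_0\ldots i_k}=\sum_j\rho_j\phi^{k+1,n-1-k}_{ji_0\ldots i_k}$ is precisely the \v{C}ech homotopy you implicitly invoke), whereas you phrase it as \v{C}ech-acyclicity of soft modules plus a spectral-sequence degeneration of the double complex. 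The genuinely different step is essential surjectivity. The paper gives a self-contained constructive proof (Proposition \ref{prop: descent of smooth complex on Cech cover}, via Lemmas \ref{lemma: gluing bundles by transition function modulo Q} and \ref{lemma: higher gluing bundles by twisted cochains}): a downward induction modelled on Thomason--Trobaugh Lemma 1.9.5, building the global complex $E^\bullet$ one degree at a time by introducing ``descent data modulo $Q$'' and gluing the kernel of the partially-built mapping cone $c^{0,1}$, using partitions of unity; this explicit construction is the technical heart of the paper. You instead route entirely through the derived level: read $\mathcal{T}$ into $D_{\text{perf}}(\text{Mod}$-$\mathcal{A}_X)$ via Theorem \ref{thm: twisted complexes gives an dg-enhancement}, replace it by a strictly perfect complex $\mathcal{V}^\bullet$ using Theorem \ref{thm: local perfect is global perfect}, and pull back. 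This shortcut is logically sound, but it rests on the compatibility you flag --- that $\mathcal{S}\circ\Phi$ agrees (up to natural isomorphism) with the obvious inclusion $\mathfrak{L}(X,\mathcal{A})\hookrightarrow D_{\text{perf}}(\text{Mod}$-$\mathcal{A}_X)$ --- which is true from the construction of $\mathcal{S}$ in \cite{wei2016twisted} but is a real bookkeeping check you have not carried out, and it treats Theorems \ref{thm: local perfect is global perfect} and \ref{thm: twisted complexes gives an dg-enhancement} as black boxes. What the paper's approach buys is an explicit, self-contained global resolution independent of the dg-enhancement theorem; what yours buys is brevity, at the cost of an implicit compatibility verification and a heavier external dependence.
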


We would like to discuss the significance of Theorem \ref{thm: quasi-equivalence of bundles and twisted complexes} in descent theory. For an open cover $\{U_i\}$ of $X$, we consider the locally free sheaves on the \v{C}ech nerve and get the following cosimplicial diagram of dg-categories
\begin{equation}\label{equation: cosimplicial diagram of Cpx of an open cover}
\begin{tikzcd}
\prod \mathfrak{L}(U_i) \arrow[yshift=0.7ex]{r}\arrow[yshift=-0.7ex]{r}& \prod \mathfrak{L}(U_i\cap U_j) \arrow[yshift=1ex]{r}\arrow{r}\arrow[yshift=-1ex]{r}  &  \prod \mathfrak{L}(U_i\cap U_j\cap U_k)  \arrow[yshift=1.2ex]{r}\arrow[yshift=0.4ex]{r}\arrow[yshift=-0.4ex]{r}\arrow[yshift=-1.2ex]{r}&\cdots
\end{tikzcd}
\end{equation}

The descent data of $\mathfrak{L}(U_i)$ is given by the \emph{homotopy limit} of Diagram (\ref{equation: cosimplicial diagram of Cpx of an open cover}). Recently in \cite{block2017explicit} it has been proved that $\text{Tw}_{\text{perf}}(X, \mathcal{A}_X, U_i)$ is quasi-equivalent to the homotopy limit of  (\ref{equation: cosimplicial diagram of Cpx of an open cover}). Hence the result in Theorem \ref{thm: quasi-equivalence of bundles and twisted complexes} can be reinterpreted as
   \begin{thm}[See Theorem \ref{thm: locally free sheaves is weakly equivalent to the homotopy limit in section} below]\label{thm: locally free sheaves is weakly equivalent to the homotopy limit}
 Let $X$ is a paracompact space with a soft structure sheaf $\mathcal{A}$. Moreover if the open cover $U_i$ is finite and good, then we have a quasi-equivalence between dg-categories
 $$
 \mathfrak{L}(X)\overset{\sim}{\to}\text{holim}_{I}\mathfrak{L}(U_I)
 $$
 \end{thm}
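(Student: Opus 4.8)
The plan is to deduce this from Theorem~\ref{thm: quasi-equivalence of bundles and twisted complexes} together with the explicit homotopy-limit description of twisted perfect complexes of Block and Holstein. First I would fix a single concrete model for the homotopy limit in (\ref{equation: cosimplicial diagram of Cpx of an open cover}) — for instance the totalization obtained from the Bousfield--Kan formula applied to the cosimplicial dg-category of the \v{C}ech nerve — so that ``quasi-equivalence'' refers unambiguously to the weak equivalences of Tabuada's model structure on dg-categories, and so that the model agrees with the one used in \cite{block2017explicit}. With that fixed, recall from \cite{block2017explicit} that for a finite good cover $\{U_i\}$ there is a quasi-equivalence $\Phi\colon \text{Tw}_{\text{perf}}(X,\mathcal{A}_X,U_i)\overset{\sim}{\to}\text{holim}_I\mathfrak{L}(U_I)$, which on objects sends a twisted complex to the family of its underlying complexes of vector bundles on the $U_I$ equipped with the coherent gluing data read off from its higher components. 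On the other hand Theorem~\ref{thm: quasi-equivalence of bundles and twisted complexes} gives a quasi-equivalence $\Psi\colon \mathfrak{L}(X)\overset{\sim}{\to}\text{Tw}_{\text{perf}}(X,\mathcal{A}_X,U_i)$, realized by restricting a bounded complex of global sections of vector bundles to each $U_i$ and regarding it as a twisted complex with vanishing connection matrices. Composing, $\Phi\circ\Psi\colon\mathfrak{L}(X)\to\text{holim}_I\mathfrak{L}(U_I)$ is a quasi-equivalence.

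The one point that genuinely needs to be checked is that this composite coincides — up to a natural transformation that is itself a homotopy equivalence — with the canonical restriction functor $r\colon\mathfrak{L}(X)\to\text{holim}_I\mathfrak{L}(U_I)$, i.e. the functor induced by the restrictions $\mathfrak{L}(X)\to\mathfrak{L}(U_I)$ and the universal map of a constant cone into the homotopy limit. This is what makes the statement ``$\mathfrak{L}(X)\overset{\sim}{\to}\text{holim}_I\mathfrak{L}(U_I)$'' meaningful rather than merely asserting that the two dg-categories happen to be weakly equivalent. So the second step is to unwind the definitions of $\Phi$ and $\Psi$ and exhibit a natural dg-isomorphism (or at worst a zig-zag of natural quasi-isomorphisms) $\Phi\circ\Psi\cong r$. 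Concretely: a global complex of vector bundles, restricted to each $U_i$ and viewed as a twisted complex with zero higher components, is sent by $\Phi$ to precisely the strict section of (\ref{equation: cosimplicial diagram of Cpx of an open cover}) given by its restrictions, which is the value of $r$; so the identification should be essentially formal, amounting to chasing the explicit formulas of \cite{block2017explicit}. Once $\Phi\circ\Psi\cong r$ is in hand, the two-out-of-three property forces $r$ to be a quasi-equivalence, which is the assertion of Theorem~\ref{thm: locally free sheaves is weakly equivalent to the homotopy limit in section}.

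I expect the main obstacle to be bookkeeping rather than mathematics: pinning down one model of $\text{holim}$ that is simultaneously compatible with \cite{block2017explicit} and with the comparison functor of Theorem~\ref{thm: quasi-equivalence of bundles and twisted complexes}, and then verifying that the functor produced by the composite is literally the restriction functor $r$ (and not some other, a priori different, quasi-equivalence). There is no new homotopical input beyond the two cited/earlier results; the work is to make their interfaces match. I should remark that one could instead attempt a proof independent of Theorem~\ref{thm: quasi-equivalence of bundles and twisted complexes}, by showing directly that the restriction functors exhibit $\mathfrak{L}(X)$ as the homotopy limit of (\ref{equation: cosimplicial diagram of Cpx of an open cover}); but such a direct descent argument for $\mathfrak{L}$ would be essentially as hard as the main theorem itself, so the reduction above is the economical route.
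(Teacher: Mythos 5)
Your proposal is essentially the paper's own argument: the paper obtains Theorem~\ref{thm: locally free sheaves is weakly equivalent to the homotopy limit in section} by citing the Block--Holstein result (Theorem~\ref{thm: twisted complex is homotopy limit}, quoted from \cite{block2017explicit}) that $\text{Tw}_{\text{perf}}(X)$ models $\text{holim}_I\mathfrak{L}(U_I)$, and then composing with the quasi-equivalence $\mathfrak{L}(X,\mathcal{A})\overset{\sim}{\to}\text{Tw}_{\text{perf}}(X)$ of Theorem~\ref{thm: quasi-equivalence of bundles and twisted complexes}. The one refinement you add --- checking that the composite $\Phi\circ\Psi$ agrees with the canonical restriction functor $r$, so that the statement is about the natural comparison map rather than an abstract weak equivalence --- is a legitimate point that the paper passes over silently; it is a worthwhile caution, but it does not constitute a different route.
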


This paper is organized as follows. In Section \ref{section: review of twisted complex} we quickly review the theory of twisted complexes. In Section \ref{section: twisted functor is a quasi-equivalence} we prove that $\mathfrak{L}(X,\mathcal{A})$ is quasi-equivalent to  $\text{Tw}_{\text{perf}}(X, \mathcal{A}_X, U_i)$. In more details, in Section \ref{subsection: twisted functor} we define the dg-functor $\mathcal{T}:\mathfrak{L}(X,\mathcal{A})\to \text{Tw}_{\text{perf}}(X, \mathcal{A}_X, U_i)$, in Section \ref{subsection: quasi-essentially surjective} we prove $\mathcal{T}$ is quasi-essentially surjective and in section \ref{subsection: quasi-fully faithful} we prove $\mathcal{T}$ is quasi-fully faithful. In Section \ref{section: twisted complex and homotopy limit} we discuss the relation between twisted perfect complexes and descent theory.

In Appendix \ref{appendix: some generalities of soft sheaves} we collect some general results on soft sheaves which are necessary in this paper.

\begin{rmk}
Part of this paper, in particular Section \ref{subsection: quasi-essentially surjective}, has been integrated in to \cite{wei2018descent}, although the notations and viewpoint are slightly different.
\end{rmk}

\section{A quick review of twisted perfect complexes}\label{section: review of twisted complex}
We give a brief review of twisted perfect complexes in this subsection, for reference see \cite{o1981trace} Section 1 or \cite{wei2016twisted} Section 2.

Let $(X,\mathcal{A})$ be a ringed paracompact topological space and $\mathcal{U}=\{U_i\}$ be an locally finite open cover of $X$. Let $U_{i_0\ldots i_n}$ denote the intersection $U_{i_0}\bigcap\ldots \bigcap U_{i_n}$.

For each $U_{i_k}$, let $E^{\bullet}_{i_k}=\{E^{r}_{i_k}\}_{r\in \mathbb{Z}}$ be a two-side bounded graded sheaf of $\mathcal{A}$-modules on $U_{i_k}$. Let
$$
C^{\bullet}(\mathcal{U},E^{\bullet})=\bigoplus_{p,q}C^p(\mathcal{U},E^q)
$$
be the bigraded complexes of $E^{\bullet}$. More precisely, an element $c^{p,q}$ of $C^p(\mathcal{U},E^q)$ consists of a section $c^{p,q}_{i_0\ldots i_p}$ of $E^{q}_{i_0}$ over each non-empty intersection $U_{i_0\ldots i_n}$. If $U_{i_0\ldots i_p}=\emptyset$, simply let the component on it be zero.

Now if another two-side bounded graded sheaf $F^{\bullet}_{i_k}$ of $\mathcal{A}$-modules is given on each $U_{i_k}$, then we can consider the map
\begin{equation}\label{equation: map with bigrade between graded sheaves}
C^{\bullet}(\mathcal{U},\text{Hom}^{\bullet}(E,F))=\bigoplus_{p,q}C^p(\mathcal{U},\text{Hom}^q(E,F)).
\end{equation}
An element $u^{p,q}$ of $C^p(\mathcal{U},\text{Hom}^q(E,F))$ gives a section $u^{p,q}_{i_0\ldots i_p}$ of $\text{Hom}^q_{\mathcal{A}-\text{Mod}}(E_{i_p},F_{i_0})$ over each non-empty intersection $U_{i_0\ldots i_n}$. Notice that we require $u^{p,q}$ to be a map from the $E$ on the last subscript of $U_{i_0\ldots i_n}$ to the $F$ on the first subscript of $U_{i_0\ldots i_n}$.

We need to study the compositions of $C^{\bullet}(\mathcal{U},\text{Hom}^{\bullet}(E,F))$. Let $\{G^{\bullet}_{i_k}\}$ be a third two-side bounded graded sheaf of $\mathcal{A}$-modules. There is a composition map
$$
C^{\bullet}(\mathcal{U},\text{Hom}^{\bullet}(F,G)) \times C^{\bullet}(\mathcal{U},\text{Hom}^{\bullet}(E,F))\rightarrow C^{\bullet}(\mathcal{U},\text{Hom}^{\bullet}(E,G)).
$$
In fact, for $u^{p,q}\in C^p(\mathcal{U},\text{Hom}^q(F,G))$ and $v^{r,s} \in C^{r}(\mathcal{U},\text{Hom}^{s}(E,F))$, their composition $(u\cdot v)^{p+r,q+s}$ is given by (see \cite{o1981trace} Equation (1.1))
\begin{equation}\label{equation: composition of maps between graded sheaves}
(u\cdot v)^{p+r,q+s}_{i_0\ldots i_{p+r}}=(-1)^{qr}u^{p,q}_{i_0\ldots i_p}v^{r,s}_{i_p\ldots i_{p+r}}
\end{equation}
where the right hand side is the composition of sheaf maps.

In particular $C^{\bullet}(\mathcal{U},\text{Hom}^{\bullet}(E,E))$ becomes an associative algebra under this composition (It is easy but tedious to check the associativity). We also notice that $C^{\bullet}(\mathcal{U},E^{\bullet})$ becomes a left module over this algebra. In fact the action
$$
C^{\bullet}(\mathcal{U},\text{Hom}^{\bullet}(E,E))\times C^{\bullet}(\mathcal{U},E^{\bullet})\rightarrow  C^{\bullet}(\mathcal{U},E^{\bullet})
$$
is given by $(u^{p,q},c^{r,s})\mapsto (u\cdot c)^{p+r,q+s}$ where (see \cite{o1981trace} Equation (1.2))
\begin{equation}\label{equation: action of maps on sheaves}
(u\cdot c)^{p+r,q+s}_{i_0\ldots i_{p+r}}=(-1)^{qr}u^{p,q}_{i_0\ldots i_p}c^{r,s}_{i_p\ldots i_{p+r}}
\end{equation}
where the right hand side is given by evaluation.

There is also a \v{C}ech-style differential operator $\delta$ on $C^{\bullet}(\mathcal{U},\text{Hom}^{\bullet}(E,F))$ and $C^{\bullet}(\mathcal{U},E^{\bullet})$ of bidegree $(1,0)$ given by the formula
\begin{equation}\label{equation: delta on maps}
(\delta u)^{p+1,q}_{i_0\ldots i_{p+1}}=\sum_{k=1}^p(-1)^k u^{p,q}_{i_0\ldots \widehat{i_k} \ldots i_{p+1}}|_{U_{i_0\ldots i_{p+1}}} \,\text{ for } u^{p,q}\in C^p(\mathcal{U},\text{Hom}^q(E,F))
\end{equation}
and
\begin{equation}\label{equation: delta on sheaves}
(\delta c)^{p+1,q}_{i_0\ldots i_{p+1}}=\sum_{k=1}^{p+1}(-1)^k c^{p,q}_{i_0\ldots \widehat{i_k} \ldots i_{p+1}}|_{U_{i_0\ldots i_{p+1}}} \,\text{ for }c^{p,q}\in C^p(\mathcal{U},E).
\end{equation}
It is not difficult to check that the \v{C}ech differential satisfies the Leibniz rule.

Now we can introduce the definition of twisted perfect complex.

\begin{defi}\label{defi: twisted complex}
Let $(X,\mathcal{A})$ be a ringed paracompact topological space and $\mathcal{U}=\{U_i\}$ be an locally finite open cover of $X$. A \emph{twisted perfect complex} consists of locally free graded sheaves $E^{\bullet}_{i}$ of $\mathcal{A}$-modules together with a collection of morphisms
$$
a=\sum_{k \geq 0} a^{k,1-k}
$$
where $a^{k,1-k}\in C^k(\mathcal{U},\text{Hom}^{1-k}(E,E)) $ and which satisfies the equation
\begin{equation}\label{equation: MC for twisted complex}
\delta a+ a\cdot a=0.
\end{equation}
More explicitly, for $k\geq 0$
\begin{equation}\label{equation: MC for twisted complex explicit}
\delta a^{k-1,2-k}+ \sum_{i=0}^k a^{i,1-i}\cdot a^{k-i,1-k+i}=0.
\end{equation}

The twisted perfect complexes on $(X,\mathcal{A}, \{U_i\})$ form a dg-category: the objects are the twisted perfect complexes $(E^{\bullet}_i,a)$ and the morphism from $\mathcal{E}=(E^{\bullet}_i,a)$ to $\mathcal{F}=(F^{\bullet}_i,b)$ are $C^{\bullet}(\mathcal{U},\text{Hom}^{\bullet}(E,F))$ with the total degree. Moreover, the differential on a morphism $\phi$ is given by
\begin{equation}\label{equation: differential on morphisms of twisted complexes}
d \phi=\delta \phi+b\cdot \phi-(-1)^{|\phi|}\phi\cdot a.
\end{equation}

We denote the dg-category of twisted complexes on $(X,\mathcal{A}, \{U_i\})$ by $\text{Tw}_{\text{perf}}(X, \mathcal{A}, U_i)$. If there is no danger of confusion we can simply denote it by $\text{Tw}_{\text{perf}}(X)$.
\end{defi}

\begin{rmk}
Twisted perfect complex is called \emph{twisted cochain} in \cite{o1981trace}. We use the terminology "twisted perfect complex" to emphasize that each $E^{\bullet}_i$ is a strictly perfect complex.
\end{rmk}

In \cite{wei2016twisted} Section 2.5 it has been shown that $\text{Tw}_{\text{perf}}(X, \mathcal{A}, U_i)$ has a pre-triangulated structure hence Ho$\text{Tw}_{\text{perf}}(X, \mathcal{A}, U_i)$ is a triangulated category.  In more details we have the following definitions.

\begin{defi}\label{defi: shift of twisted complex}[Shift]
Let $\mathcal{E}=(E^{\bullet}_i,a)$ be a twisted perfect complex. We define its shift   $\mathcal{E}[1]$ to be $\mathcal{E}[1]=(E[1]^{\bullet}_i,a[1])$ where
$$
E[1]^{\bullet}_i=E^{\bullet+1}_i \text{ and } a[1]^{k,1-k}=(-1)^{k-1}a^{k,1-k}.
$$

Moreover, let $\phi: \mathcal{E}\to \mathcal{F}$ be a morphism. We define its shift $\phi[1]$ as
$$
\phi[1]^{p,q}=(-1)^q\phi^{p,q}.
$$
\end{defi}

\begin{defi}\label{defi: mapping cone}[Mapping cone]
Let $\phi^{\bullet,-\bullet}$ be a closed degree zero map between twisted perfect complexes $\mathcal{E}=(E^{\bullet},a^{\bullet,1-\bullet})$ and $\mathcal{F}=(F^{\bullet},b^{\bullet,1-\bullet})$ , we can define the \emph{mapping cone} $\mathcal{G}=(G,c)$ of $\phi$ as follows (see \cite{o1985grothendieck} Section 1.1):
$$
G^n_i:=E^{n+1}_i\oplus F^n_i
$$
and
\begin{equation}\label{equation: diff in mapping cone}
c^{k,1-k}_{i_0\ldots i_k}=\begin{pmatrix}(-1)^{k-1} a^{k,1-k}_{i_0\ldots i_k}&0\\ (-1)^k\phi^{k,-k}_{i_0\ldots i_k}&b^{k,1-k}_{i_0\ldots i_k}\end{pmatrix}.
\end{equation}
\end{defi}

We have some further results on the morphisms between twisted perfect complexes.

\begin{prop}[\cite{wei2016twisted} Proposition 2.31]\label{prop: homotopy invertible morphisms}
Let the cover $\{U_i\}$  satisfies   $H^k(U_i,\mathcal{F})=0$ for any $i$, any  sheaf $\mathcal{F}$ on $U_i$ and any $k\geq 1$. If $\mathcal{E}$ and $\mathcal{F}$ are both in the subcategory $\text{Tw}_{\text{perf}}(X)$, then a closed degree zero morphism $\phi$ between twisted complexes $\mathcal{E}$ and $\mathcal{F}$ is invertible in the homotopy category $\text{HoTw}_{\text{perf}}(X)$   if and only if its $(0,0)$ component
$$
\phi^{0,0}_i: (E^{\bullet}_i,a^{0,1}_i)\to (F^{\bullet}_i,b^{0,1}_i)
$$
is a quasi-isomorphism of complexes of $\mathcal{A}$-modules on $U_i$ for each $i$. In this case we call $\phi$ a \emph{weak equivalence} between  $\mathcal{E}$ and $\mathcal{F}$.
\end{prop}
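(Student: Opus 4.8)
The plan is to prove the two implications separately; the ``only if'' direction is formal, while the ``if'' direction carries the content. For ``only if'', observe that the assignment $\mathcal{E}=(E^{\bullet}_i,a)\mapsto\{(E^{\bullet}_i,a^{0,1}_i)\}_i$, $\phi\mapsto\{\phi^{0,0}_i\}_i$ is compatible with composition (by \eqref{equation: composition of maps between graded sheaves} the $(0,0)$-component of $u\cdot v$ is $u^{0,0}v^{0,0}$), carries closed degree-zero morphisms to chain maps, and carries a degree $-1$ morphism $h$ with $d h=\phi$ to a chain homotopy --- all of this by reading off the \v{C}ech-degree-zero part of \eqref{equation: differential on morphisms of twisted complexes} and using that $\delta$ strictly raises \v{C}ech degree. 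Hence, if $\phi$ is invertible in $\text{HoTw}_{\text{perf}}(X)$, a homotopy inverse $\psi$ gives families $\psi^{0,0}_i$ that are homotopy inverse to $\phi^{0,0}_i$, so each $\phi^{0,0}_i$ is in particular a quasi-isomorphism.

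For ``if'', form the mapping cone $\mathcal{G}=(G^{\bullet}_i,c)$ of $\phi$ (Definition \ref{defi: mapping cone}); it lies in $\text{Tw}_{\text{perf}}(X)$, and by the pre-triangulated structure of $\text{Tw}_{\text{perf}}(X)$, $\phi$ is invertible in $\text{HoTw}_{\text{perf}}(X)$ if and only if $\mathcal{G}$ is a zero object there, i.e. if and only if $\text{id}_{\mathcal{G}}$ is null-homotopic. By \eqref{equation: diff in mapping cone}, the complex $(G^{\bullet}_i,c^{0,1}_i)$ on $U_i$ is exactly the ordinary mapping cone of $\phi^{0,0}_i$, hence acyclic by hypothesis. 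So it suffices to show: \emph{if $\mathcal{G}=(G^{\bullet}_i,c)\in\text{Tw}_{\text{perf}}(X)$ is such that $(G^{\bullet}_i,c^{0,1}_i)$ is acyclic for every $i$, then $\delta h+c\cdot h+h\cdot c=\text{id}_{\mathcal{G}}$ has a solution $h$ of total degree $-1$.} I would build $h=\sum_{k\ge 0}h^{k,-1-k}$ by induction on the \v{C}ech degree $k$: having chosen $h^{j,-1-j}$ for $j<k$ so that the discrepancy $\delta h+c\cdot h+h\cdot c-\text{id}_{\mathcal{G}}$ has no components in \v{C}ech degrees $<k$, its \v{C}ech-degree-$k$ component $\Theta_k$ is --- because the total differential on $C^{\bullet}(\mathcal{U},\text{Hom}^{\bullet}(G,G))$ squares to zero and $\text{id}_{\mathcal{G}}$ is closed --- closed for the differential $L\colon x\mapsto c^{0,1}\cdot x-(-1)^{|x|}x\cdot c^{0,1}$; if $L$ admits a contracting homotopy $\sigma$, then $h^{k,-1-k}:=-\sigma(\Theta_k)$ removes the degree-$k$ discrepancy without disturbing lower degrees. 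Boundedness of each $G^{\bullet}_i$ forces $h^{k,-1-k}=0$ once $k$ exceeds the amplitude, so the induction terminates; the $k=0$ step is the one producing a contracting homotopy of $(G^{\bullet}_i,c^{0,1}_i)$ itself.

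Everything thus reduces to the acyclicity of $L$, with a contracting homotopy, on each $C^k(\mathcal{U},\text{Hom}^{\bullet}(G,G))$, and this is precisely where the hypothesis $H^{\ell}(U_i,\mathcal{F})=0$ for $\ell\ge1$ enters. First, a bounded acyclic complex of finite-rank locally free $\mathcal{A}|_{U_i}$-modules is contractible on $U_i$: local freeness makes $\mathcal{E}xt^{\ell}$ out of each term vanish for $\ell\ge1$, so the local-to-global $\text{Ext}$ spectral sequence gives $\text{Ext}^1_{\mathcal{A}|_{U_i}}(G^q_i,\mathcal{K})=H^1(U_i,\mathcal{H}om(G^q_i,\mathcal{K}))=0$ for all sheaves $\mathcal{K}$, and a downward induction on the finite amplitude splits the complex, producing a contraction $s_i$ on $U_i$. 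Restricting $s_{i_0}$ to $U_{i_0\ldots i_k}$, the assignment $f\mapsto s_{i_0}\circ f$ is then a contracting homotopy for $L$ on $C^k(\mathcal{U},\text{Hom}^{\bullet}(G,G))$ (a direct check from $c^{0,1}_{i_0}s_{i_0}+s_{i_0}c^{0,1}_{i_0}=\text{id}$). This completes the argument.

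I expect the main obstacle to be the interplay of these two ingredients. The first --- contractibility of a bounded acyclic complex of finite-rank locally free sheaves on $U_i$ --- is where the cohomological vanishing (hence softness of $\mathcal{A}$) is indispensable, and it must be arranged via the identification of $\text{Ext}^1$ with sheaf cohomology. The second --- the \v{C}ech-degree induction --- is a perturbation-type argument whose only delicate point is checking, via the vanishing of the square of the total differential, that the obstruction $\Theta_k$ at each stage is $L$-closed; neither step is conceptually difficult, but the signs in \eqref{equation: composition of maps between graded sheaves} and \eqref{equation: differential on morphisms of twisted complexes} have to be tracked carefully throughout.
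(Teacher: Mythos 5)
Your proposal is correct, and the overall strategy --- reduce to contractibility of the mapping cone $\mathcal{G}$, then build a contraction $h=\sum_k h^{k,-1-k}$ by induction on \v{C}ech degree, feeding in local contractions of $(G^{\bullet}_i,c^{0,1}_i)$ at each stage --- is the standard one for this kind of statement (it is essentially the O'Brian--Toledo--Tong style argument that \cite{wei2016twisted} also follows). The paper itself only cites the source for the proof, but your argument is a faithful reconstruction of that approach.

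Two small points worth flagging, both of which you partially anticipate. First, the contracting homotopy $\sigma$ on $C^k(\mathcal{U},\text{Hom}^{\bullet}(G,G))$ cannot literally be $f\mapsto s_{i_0}\circ f$: with the composition sign $(-1)^{qr}$ from \eqref{equation: composition of maps between graded sheaves}, one finds $(L\sigma+\sigma L)(f)=(-1)^k f$, so you need $\sigma(f)_{i_0\ldots i_k}=(-1)^k\,s_{i_0}\circ f_{i_0\ldots i_k}$ (or an equivalent sign normalization) before $L\sigma+\sigma L=\mathrm{id}$ holds on the nose. Second, the ``downward induction splits the complex'' step implicitly needs the kernels $Z^q=\ker(c^{0,1}_i\colon G^q_i\to G^{q+1}_i)$ to be locally free of finite rank, so that $\mathcal{E}xt^{\ell}(Z^{q+1},-)$ vanishes for $\ell\ge 1$ at each stage; this is supplied by Corollary~\ref{coro: kernel of acyclic above complex is a vector bundle}, and it is worth citing it explicitly rather than letting it pass silently. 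Neither issue affects the validity of the argument.
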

\begin{proof}
See \cite{wei2016twisted} Proposition 2.31.
\end{proof}

As we mentioned before, the dg-category of twisted perfect complexes gives a dg-enhancement of $D_{\text{perf}}(\mathcal{A}-\text{mod})$. To state our theorem  we first introduce the concept of p-good cover.

\begin{defi}\label{defi: good space}
A locally ringed space $(U,\mathcal{O}_U)$ is called \emph{p-good} if it satisfies the following two conditions
\begin{enumerate}
\item For every perfect complex $\mathcal{P}^{\bullet}$ on $U$, there exists a strictly perfect complex $\mathcal{E}^{\bullet}$ on $U$ together with a quasi-isomorphism $u: \mathcal{E}^{\bullet}\overset{\sim}{\to}\mathcal{P}^{\bullet}$.
\item The higher cohomologies of any sheaf vanish, i.e. $H^k(U,\mathcal{F})=0$ for any   sheaf $\mathcal{F}$ on $U$ and any $k\geq 1$.
\end{enumerate}

An open cover $\{U_i\}$ of $X$ is called a \emph{p-good cover} if $(U_I,\mathcal{O}_X|_{U_I})$ is a p-good space for   any finite intersection $U_I$ of the open cover.
\end{defi}

\begin{thm}[\cite{wei2016twisted} Theorem 3.32]\label{thm: twisted complexes gives an dg-enhancement in general}
For a p-good cover $\{U_i\}$ of $X$ we have an equivalence of categories
\begin{equation}
\mathcal{S}: Ho\text{Tw}_{\text{perf}}(X)\overset{\sim}{\to} D_{\text{perf}}(\mathcal{A}-\text{mod}).
\end{equation}
In other words, $\text{Tw}_{\text{perf}}(X, \mathcal{A}, U_i)$ gives a dg-enhancement of $D_{\text{perf}}(\mathcal{A}-\text{mod})$.
\end{thm}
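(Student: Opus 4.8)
The plan is to construct the comparison functor $\mathcal{S}$ explicitly as a total-complex construction and then to verify that it is essentially surjective and fully faithful. \emph{Construction of $\mathcal{S}$.} Given a twisted perfect complex $\mathcal{E}=(E^\bullet_i,a)$, regard each bigraded piece $C^p(\mathcal{U},E^q)$ as the group of global sections of the sheaf of $\mathcal{A}$-modules $V\mapsto\prod_{i_0,\ldots,i_p}E^q_{i_0}(U_{i_0\ldots i_p}\cap V)$ on $X$, and set $\mathcal{S}(\mathcal{E})^n=\bigoplus_{p+q=n}C^p(\mathcal{U},E^q)$ with the twisted differential $D=\delta+a$ (with the signs built into \eqref{equation: action of maps on sheaves} and \eqref{equation: delta on sheaves}); the identity $D^2=0$ is precisely the Maurer--Cartan equation \eqref{equation: MC for twisted complex}. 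To see that $\mathcal{S}(\mathcal{E})$ is a perfect complex, filter it by \v{C}ech degree: the associated graded carries only the differential $\delta$, and since $C^\bullet(\mathcal{U},\mathcal{F})$ is always a resolution of a sheaf $\mathcal{F}$ --- the contracting homotopy ``insert the index $j$'' exists over $U_j$ because $U_j\in\mathcal{U}$ --- a standard filtration argument yields a quasi-isomorphism $\mathcal{S}(\mathcal{E})|_{U_j}\simeq(E^\bullet_j,a^{0,1}_j)$ for each $j$, and p-goodness makes $(E^\bullet_j,a^{0,1}_j)$ strictly perfect. A closed degree zero morphism $\phi$ of twisted complexes induces the evident chain map $\mathcal{S}(\phi)$, and \eqref{equation: differential on morphisms of twisted complexes} shows a $d$-coboundary morphism goes to a null-homotopic chain map; combining the local identification above with Proposition \ref{prop: homotopy invertible morphisms}, a weak equivalence goes to a quasi-isomorphism. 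Hence $\mathcal{S}$ descends to a functor $Ho\text{Tw}_{\text{perf}}(X)\to D_{\text{perf}}(\mathcal{A}-\text{mod})$.

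\emph{Essential surjectivity.} Let $\mathcal{P}^\bullet$ be a perfect complex on $X$. Condition (1) of p-goodness provides strictly perfect local models $u_i\colon E^\bullet_i\xrightarrow{\sim}\mathcal{P}^\bullet|_{U_i}$. I then build $a=\sum_{k\geq0}a^{k,1-k}$ by induction on the \v{C}ech degree $k$: take $a^{0,1}$ to be the differentials of the $E^\bullet_i$; take $a^{1,0}_{ij}$ to be an honest degree zero map lifting, compatibly with the $u_i$, the canonical derived isomorphism $E^\bullet_j|_{U_{ij}}\cong E^\bullet_i|_{U_{ij}}$ --- such a lift exists because condition (2) forces $\text{Hom}_{D(U_{ij})}(E^\bullet_j,E^\bullet_i)$ to coincide with $H^0$ of the naive Hom-complex; and at stage $k$ use that \eqref{equation: MC for twisted complex explicit} exhibits $\delta a^{k-1,2-k}$ as a $\delta$-closed expression in the already chosen components, whose class in $H^{\geq1}$ of the \v{C}ech complex with coefficients in the relevant Hom-sheaves is killed, again by condition (2), so that the next component can be chosen. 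Comparing through $\bigoplus u_i$ shows that $\mathcal{S}$ of the resulting twisted complex is quasi-isomorphic to $\mathcal{P}^\bullet$.

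\emph{Full faithfulness.} For twisted complexes $\mathcal{E},\mathcal{F}$ it is enough to prove that the morphism of complexes $\big(C^\bullet(\mathcal{U},\text{Hom}^\bullet(E,F)),d\big)\to R\text{Hom}_{\mathcal{A}}(\mathcal{S}\mathcal{E},\mathcal{S}\mathcal{F})$ induced by $\mathcal{S}$ is a quasi-isomorphism, and then to pass to $H^n$ for all $n$ (using the shift of Definition \ref{defi: shift of twisted complex} to handle the nonzero degrees). Since $\mathcal{S}(\mathcal{E})|_{U_I}$ is strictly perfect, the termwise sheaf-Hom of complexes of locally free sheaves computes the derived sheaf-Hom on each $U_I$; since the cover is p-good, the \v{C}ech complex on the $U_I$ computes hypercohomology; a double-complex comparison then identifies $R\text{Hom}_{\mathcal{A}}(\mathcal{S}\mathcal{E},\mathcal{S}\mathcal{F})$ with the twisted Hom-complex. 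Together with essential surjectivity this yields the asserted equivalence of categories.

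\emph{Main obstacle.} The two genuinely non-formal points are the inductive solution of the Maurer--Cartan equation in the proof of essential surjectivity --- where at each stage one must verify that the obstruction lands in a \v{C}ech cohomology group annihilated by p-goodness, while carefully tracking the signs coming from \eqref{equation: composition of maps between graded sheaves} --- and the double-complex/spectral-sequence comparison in the proof of full faithfulness; these are precisely the places where both conditions in the definition of a p-good cover are indispensable.
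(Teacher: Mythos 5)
The paper states Theorem~\ref{thm: twisted complexes gives an dg-enhancement in general} as a citation to \cite{wei2016twisted} Theorem~3.32 and provides no proof of its own, so there is no in-paper argument to compare against; your proposal must therefore be judged against the cited source and the standard literature (O'Brian--Toledo--Tong, Wei). On that basis your sketch follows the standard approach: the sheafification functor built from the totalized \v{C}ech complex, essential surjectivity via an inductive Maurer--Cartan construction whose obstructions lie in \v{C}ech cohomology groups annihilated by p-goodness, and full faithfulness via a double-complex/hypercohomology comparison --- and each of these steps is in substance correct.

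One place where the write-up is a bit loose, and which you should tighten if this is to become a real proof: at stage $k$ of the induction, the unknown $a^{k,1-k}$ enters the Maurer--Cartan identity \eqref{equation: MC for twisted complex explicit} through the two terms $a^{0,1}\cdot a^{k,1-k}+a^{k,1-k}\cdot a^{0,1}$, which constitute the internal Hom-differential applied to $a^{k,1-k}$, while the remaining terms (already determined) form the obstruction cocycle. One must show that this obstruction is closed for the total differential of the \v{C}ech--Hom bicomplex and then invoke the vanishing of $H^{\geq 1}$ there; your phrasing ``exhibits $\delta a^{k-1,2-k}$ as a $\delta$-closed expression'' conflates these two differentials and slightly misidentifies which piece is the obstruction. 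Similarly, when you invoke ``$C^{\bullet}(\mathcal{U},\mathcal{F})$ is always a resolution,'' note that the $\delta$ of \eqref{equation: delta on sheaves} omits the $k=0$ face; the usual \v{C}ech complex is only recovered after adding the $a^{1,0}$ contribution to the total differential, so the filtration argument should be run on $\delta+a^{1,0}$ rather than on $\delta$ alone.
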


\begin{rmk}
Actually if the structure sheaf $\mathcal{A}$ is soft, it is easy to show that any open cover $\{U_i\}$ is p-good hence $\text{Tw}_{\text{perf}}(X, \mathcal{A}, U_i)$ gives a dg-enhancement of $D_{\text{perf}}(\mathcal{A}-\text{mod})$.
\end{rmk}

If $\mathcal{A}$ is soft, we can even take $\{U_i\}$ to be one single open set $X$ and Theorem \ref{thm: twisted complexes gives an dg-enhancement in general} still holds. In this case $\text{Tw}_{\text{perf}}(X, \mathcal{A}, U_i)$  reduces to  $\mathfrak{L}(X,\mathcal{A})$,  the dg-category of bounded complexes of global sections of locally free finitely generated sheaves on $X$, and we see that $\mathfrak{L}(X,\mathcal{A})$ is a dg-enhancement of $D_{\text{perf}}(\mathcal{A}-\text{mod})$.

\section{The descent of twisted perfect complexes}\label{section: twisted functor is a quasi-equivalence}
\subsection{The twisting functor $\mathcal{T}$}\label{subsection: twisted functor}
In this subsection we state the main theorem in this paper, i.e. $\mathfrak{L}(X,\mathcal{A})$ is quasi-equivalent to $\text{Tw}_{\text{perf}}(X, \mathcal{A}, U_i)$.

First we define a natural dg-functor from $\mathfrak{L}(X,\mathcal{A})$  to $\text{Tw}_{\text{perf}}(X, \mathcal{A}, U_i)$ as follows
\begin{defi}[\cite{wei2016twisted} Definition 3.11]\label{defi: twisted functor}
Let $(S^{\bullet},d)$ be a complex of  $\mathcal{A}(X)$-modules. We define its associated twisted perfect complex, $\mathcal{T}(S)$,  by restricting to the $U_i$'s. In more details let $(E^{\bullet},a)=\mathcal{T}(S)$ then
$$
E^{n}_i=S^n|_{U_i}
$$
and
$$
a^{0,1}_i=d|_{U_i},~a^{1,0}_{ij}=id \text{ and } a^{k,1-k}=0 \text{ for }k\geq 2.
$$
The $\mathcal{T}$ of morphisms is defined in a similar way.

We call the dg-functor $\mathcal{T}: \mathfrak{L}(X,\mathcal{A})\to \text{Tw}_{\text{perf}}(X, \mathcal{A}, U_i)$ the \emph{twisting functor}.
\end{defi}

We want to prove that the  $\mathcal{T}$ is actually a quasi-equivalence between dg-categories. First we need some condition on the open cover $\{U_i\}$.

\begin{defi}\label{defi: good cover}
A open cover $U_i$ of $X$ is good if any finite intersection $U_{i_0\ldots i_k}$ is contractible.
\end{defi}

It is well-known that good open cover exists for a wide ranged of spaces $X$. With Definition \ref{defi: good cover} we could state the main theorem of this paper.

\begin{thm}\label{thm: quasi-equivalence of bundles and twisted complexes}
 Let $X$ is a paracompact space with a \emph{soft} structure sheaf $\mathcal{A}$. Moreover if the open cover $U_i$ is finite and good, then the dg-functor $\mathcal{T}: \mathfrak{L}(X,\mathcal{A})\to \text{Tw}_{\text{perf}}(X, \mathcal{A}, U_i)$ is a quasi-equivalence.
 \end{thm}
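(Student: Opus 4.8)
The plan is to show that $\mathcal{T}$ is both quasi-fully faithful and quasi-essentially surjective, which together give the claimed quasi-equivalence. For quasi-essential surjectivity, the strategy is: given any twisted perfect complex $\mathcal{E}=(E^\bullet_i, a)$, I need to produce a bounded complex $S^\bullet$ of global sections of finitely generated locally free sheaves on $X$ together with a weak equivalence $\mathcal{T}(S) \simeq \mathcal{E}$ in $\operatorname{Tw}_{\text{perf}}(X)$. The natural approach is a \v{C}ech-to-global resolution: form the total complex of the \v{C}ech bicomplex $C^\bullet(\mathcal{U}, E^\bullet)$ with differential $\delta + a$, interpret this as a complex of sheaves on $X$ (using that $\mathcal{A}$ is soft, so pushforwards along the cover are acyclic and global sections are exact by Theorem \ref{thm: local perfect is global perfect}(1)), and then observe that since each $E^\bullet_i$ is a finite complex of finitely generated locally free $\mathcal{A}|_{U_i}$-modules and the cover is finite, the associated total object is a perfect complex of $\mathcal{A}$-modules; by Theorem \ref{thm: local perfect is global perfect}(2)--(3) and softness it has a global finite locally free resolution, giving $S^\bullet$. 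One then builds the comparison morphism $\mathcal{T}(S)\to \mathcal{E}$ degree-by-degree using partitions of unity subordinate to $\{U_i\}$ (paracompactness plus softness supplies these), and checks via Proposition \ref{prop: homotopy invertible morphisms} that its $(0,0)$-component is a quasi-isomorphism on each $U_i$ — which it is because the cover is good, so each $U_i$ and each intersection is contractible and \v{C}ech cohomology computes sheaf cohomology locally.

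For quasi-fully faithfulness, I must show that for complexes $S^\bullet, T^\bullet$ of $\mathcal{A}(X)$-modules (global sections of locally free sheaves), the chain map
$$
\mathcal{T}: \operatorname{Hom}^\bullet_{\mathfrak{L}(X,\mathcal{A})}(S,T) \longrightarrow \operatorname{Hom}^\bullet_{\operatorname{Tw}_{\text{perf}}(X)}(\mathcal{T}(S), \mathcal{T}(T)) = C^\bullet(\mathcal{U}, \operatorname{Hom}^\bullet(S|_U, T|_U))
$$
is a quasi-isomorphism. The left-hand side is $\operatorname{Hom}^\bullet_{\mathcal{A}(X)}(S^\bullet, T^\bullet)$ with the usual differential; the right-hand side is the total complex of the \v{C}ech double complex of the sheaf-Hom complex, with total differential incorporating $\delta$ and the twisting by $a, b$ (which here are just the restricted differentials together with the identity transition maps). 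The key point is that the \v{C}ech-to-sheaf-cohomology comparison for the complex of sheaves $\mathcal{H}om^\bullet(\mathcal{S}, \mathcal{T})$ (where $\mathcal{S}, \mathcal{T}$ are the locally free sheaves with global sections $S, T$) is an isomorphism: since the cover is good, each finite intersection is contractible, and more importantly since $\mathcal{A}$ is soft the sheaves $\mathcal{H}om^q(\mathcal{S},\mathcal{T})$ are themselves soft (Hom of locally free into a soft module over a soft ring is soft — this should be in the appendix on soft sheaves), hence $\Gamma$-acyclic, so both \v{C}ech and derived global sections agree and moreover $\Gamma$ is exact. Combining the \v{C}ech spectral sequence with this acyclicity collapses the right-hand total complex onto $\Gamma(X, \mathcal{H}om^\bullet(\mathcal{S},\mathcal{T}))$, and by Theorem \ref{thm: local perfect is global perfect}(1) this equals $\operatorname{Hom}^\bullet_{\mathcal{A}(X)}(S^\bullet, T^\bullet)$, which is exactly the left-hand side — with matching differentials after chasing signs.

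I expect the main obstacle to be the quasi-essential surjectivity, specifically constructing the explicit weak equivalence $\mathcal{T}(S)\to \mathcal{E}$ rather than merely a zigzag, and keeping track of signs in the Maurer--Cartan-type equations when passing from the twisted complex $(E^\bullet_i, a)$ to its "totalization" as a genuine complex of sheaves. One clean way to organize this: first reduce to the case where $\mathcal{E}$ has $a^{k,1-k}=0$ for $k\geq 2$ (i.e. $\mathcal{E}$ comes from a genuine complex of sheaves glued from the $E^\bullet_i$ via transition isomorphisms) by showing every twisted complex is weakly equivalent to such a "normalized" one using the higher homotopies and partitions of unity — this is essentially the descent argument and uses softness crucially — and then apply Theorem \ref{thm: local perfect is global perfect} directly to that complex of sheaves to get $S^\bullet$. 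The quasi-fully faithful part, by contrast, is a fairly mechanical spectral-sequence or acyclic-assembly argument once the softness of the relevant Hom-sheaves is in hand.
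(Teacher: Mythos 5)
Your split into quasi-fully-faithful plus quasi-essentially-surjective is exactly the paper's organization, and your treatment of the fully-faithful half, though phrased differently, is a valid alternative route: you invoke the \v{C}ech-to-derived-global-sections comparison for the Hom-sheaf complex, using that $\mathcal{H}om^q(\mathcal{S},\mathcal{T})$ is an $\mathcal{A}$-module and hence fine (Proposition~\ref{prop: sheaf of modules of soft rings is fine}) so that the \v{C}ech complex collapses to $\Gamma(X,\mathcal{H}om^\bullet)=\operatorname{Hom}^\bullet_{\mathcal{A}(X)}(S,T)$. The paper instead constructs an explicit contracting homotopy by partition-of-unity averaging (Propositions~\ref{prop: surjective of morphisms} and~\ref{prop: injective of morphisms}: set $\widetilde{\phi}=\sum_i\rho_i\phi^{0,n}_i$ and $\varphi^{k,n-1-k}_{i_0\ldots i_k}=\sum_j\rho_j\phi^{k+1,n-1-k}_{ji_0\ldots i_k}$). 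Your argument is more conceptual but requires a spectral-sequence comparison whose compatibility with the twisted differential would need checking; the paper's is more elementary and avoids spectral sequences entirely. Both work.

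The quasi-essential-surjectivity half, however, has a genuine gap. Your first plan asserts that the total complex of the \v{C}ech bicomplex $C^\bullet(\mathcal{U},E^\bullet)$ with differential $\delta+a$, ``interpreted as a complex of sheaves,'' is a perfect complex on $X$. But the sheaf-level \v{C}ech terms are direct sums of pushforwards $(j_I)_*\bigl(E^q_{i_0}|_{U_I}\bigr)$, and pushforwards of locally free sheaves along open inclusions are not locally free; it is not at all obvious that this total object is perfect, and you do not argue it. Your fallback --- normalize every twisted complex to one with $a^{k,1-k}=0$ for $k\geq 2$ ``using partitions of unity'' --- is not an argument but a restatement of the goal: averaging $a^{1,0}_{ij}$ by a partition of unity does not produce a strict cocycle, only a cocycle up to terms killed by the higher $a$'s, and even a normalized twisted complex need not have the $a^{1,0}_{ij}$ be genuine isomorphisms (as required to glue to a complex of sheaves), only quasi-isomorphisms. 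The paper's actual proof of Proposition~\ref{prop: descent of smooth complex on Cech cover} replaces this wishful averaging by a downward induction on degree, inspired by Thomason--Trobaugh Lemma 1.9.5: at each stage it passes to the mapping cone of the partially-built morphism $\phi\colon\mathcal{T}(E^{\geq m+1})\to\mathcal{F}$, applies Lemma~\ref{lemma: higher gluing bundles by twisted cochains} (which formalizes ``surjective modulo the next degree down'' and rests on Lemma~\ref{lemma: gluing bundles by transition function modulo Q}'s notion of descent modulo $Q$) to manufacture a new locally free sheaf $E^m$ together with maps $\psi^{k,-k}$, and extracts from these both the new differential $a^{0,1}\colon E^m\to E^{m+1}$ and the extended comparison maps $\phi^{k,-k}$. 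This delicate bookkeeping --- in particular that the candidate differential glues across charts and that the induced map on $H^{m+1}$ becomes injective while on $H^m$ it is surjective --- is precisely what is missing from your sketch.
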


 From now on we always assume that the structure sheaf $\mathcal{A}$  is soft. We leave the discussion of the properties of soft sheaves in Appendix \ref{appendix: some generalities of soft sheaves}.

\subsection{The quasi-essential surjectivity of $\mathcal{T}$}\label{subsection: quasi-essentially surjective}
First we prove that $\mathcal{T}$ is  quasi-essentially surjective.

 \begin{prop}[Quasi-essential surjectivity]\label{prop: descent of smooth complex on Cech cover}
 Let $X$ is a paracompact space with a soft structure sheaf $\mathcal{A}$.  If the open cover $\{U_i\}$ is finite and good, then for every twisted perfect complex $\mathcal{F}=(F_i, b)$, there is a bounded complex of finitely generated locally free $\mathcal{A}$-modules $\mathcal{E}=(E^{\bullet},d)$ such that $\mathcal{F}$ is isomorphic to $\mathcal{T}(\mathcal{E})$ in the homotopy category HoTw$(X)$.

 In more details, there exists a twisted complex $E=(E^{\bullet}, a)$ such that $a^{0,1}=d$, $a^{1,0}=id$ and $a^{k,1-k}=0$ for all $k\geq 2$ together with a  degree zero morphism
 $\phi: E\rightarrow F$ such that  for all $k \geq 0$ and all $U_{i_0\ldots i_k}$ we have a $\mathcal{A}$-module map
 $$
 \phi^{k,-k}_{i_0\ldots i_k}: E^{\bullet}|_{U_{i_0\ldots i_k}}\rightarrow F_{i_0}^{\bullet-k}|_{U_{i_0\ldots i_k}}
 $$
which satisfies the following two conditions
 \begin{enumerate}
 \item The $\phi^{k,-k}$'s intertwine $a$ and $b$:
 \begin{equation}\label{equation: phi intertwine a and b}
( \delta \phi^{k-1,1-k})_{i_0\ldots i_k}+\sum_{p=0}^k b^{p,1-p}_{i_0\ldots i_p}\phi^{k-p,p-k}_{i_p\ldots i_k}-\phi^{k,-k}_{i_0\ldots i_k} a^{0,1}_{i_k}-\phi^{k-1,1-k}_{i_0\ldots i_{k-1}}|_{U_{i_0\ldots i_k}}=0.
 \end{equation}
\item $\phi$ is invertible in the homotopy category HoTw$(X)$. According to Proposition \ref{prop: homotopy invertible morphisms}, this is the same to say that for each $i$, the map $\phi^{0,0}_i$ induces a quasi-isomorphism from $(E^{\bullet}|_{U_i}, a^{0,1}_i)$ to $(F^{\bullet}_i, b^{0,1}_i)$.
 \end{enumerate}
 \end{prop}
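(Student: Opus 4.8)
The plan is to globalise $\mathcal F$ with the help of softness, recognise the result as a perfect complex over $\mathcal A(X)$, and then transport the resulting isomorphism back into $\text{Tw}_{\text{perf}}(X)$. In detail: write $\mathcal F=(F_i,b)$, where each $F_i^\bullet$ is a bounded complex of finitely generated locally free $\mathcal A|_{U_i}$-modules; since the cover is finite and the $F_i^\bullet$ bounded, only finitely many $b^{k,1-k}$ are nonzero. Let $\mathcal S(\mathcal F)$ be the bounded complex of $\mathcal A$-modules attached to $\mathcal F$ by the dg-enhancement of Theorem~\ref{thm: twisted complexes gives an dg-enhancement in general} --- concretely the total complex of the \v{C}ech double complex built from the pushforwards $(\iota_{i_0\ldots i_p})_*\!\big(F_{i_0}^\bullet|_{U_{i_0\ldots i_p}}\big)$ with differential $\delta+b$ --- which is a perfect complex of sheaves. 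Because $\mathcal A$ is soft, each $F_{i_0}^\bullet$ is a soft sheaf (being an $\mathcal A$-module; see Appendix~\ref{appendix: some generalities of soft sheaves}), and softness is preserved by restriction to the opens $U_{i_0\ldots i_p}$ and by pushforward along the inclusions into the paracompact $X$; hence every term of $\mathcal S(\mathcal F)$ is soft and therefore $\Gamma(X,-)$-acyclic. Consequently $T^\bullet:=\Gamma\big(X,\mathcal S(\mathcal F)\big)=\big(C^\bullet(\mathcal U,F^\bullet)_{\mathrm{tot}},\,\delta+b\big)$ computes $R\Gamma(X,\mathcal S(\mathcal F))$ and, by Theorem~\ref{thm: local perfect is global perfect}, is a perfect complex over $\mathcal A(X)$. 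Choose a bounded complex $(E^\bullet,d)$ of finitely generated projective $\mathcal A(X)$-modules together with a quasi-isomorphism $q\colon E^\bullet\xrightarrow{\ \sim\ }T^\bullet$; by Theorem~\ref{thm: local perfect is global perfect} once more, $E^\bullet$ is the complex of global sections of a bounded complex of finitely generated locally free $\mathcal A$-modules, i.e. an object of $\mathfrak L(X,\mathcal A)$, and $\mathcal T(E^\bullet)=(E^\bullet|_{U_i},a)$ of Definition~\ref{defi: twisted functor} has $a^{0,1}=d$, $a^{1,0}=\mathrm{id}$ and $a^{k,1-k}=0$ for $k\ge 2$, exactly as the statement demands.

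It remains to produce the weak equivalence $\phi\colon\mathcal T(E^\bullet)\to\mathcal F$. Apply $\mathcal S$, which is legitimate because a soft structure sheaf makes every open cover $p$-good. On one hand $\mathcal S(\mathcal T(E^\bullet))$ is the augmented \v{C}ech complex of $\widetilde{E^\bullet}=E^\bullet\otimes_{\mathcal A(X)}\mathcal A$ for the cover $\mathcal U$, hence quasi-isomorphic to $\widetilde{E^\bullet}$. On the other hand, in the soft setting $\Gamma(X,-)$ is an exact equivalence whose quasi-inverse is $\widetilde{(\cdot)}$ (Theorem~\ref{thm: local perfect is global perfect}), so $q$ sheafifies to a quasi-isomorphism $\widetilde{E^\bullet}\xrightarrow{\ \sim\ }\widetilde{T^\bullet}=\widetilde{\Gamma(X,\mathcal S(\mathcal F))}\cong\mathcal S(\mathcal F)$. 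Hence $\mathcal S(\mathcal T(E^\bullet))\cong\mathcal S(\mathcal F)$ in $D_{\mathrm{perf}}(\mathcal A\text{-mod})$, and since $\mathcal S$ is an equivalence of categories this lifts to an isomorphism $\mathcal T(E^\bullet)\xrightarrow{\ \sim\ }\mathcal F$ in $\text{HoTw}(X)$. Pick a closed degree-$0$ morphism $\phi=\sum_{k\ge 0}\phi^{k,-k}$ of $\text{Tw}_{\text{perf}}(X)$ representing it; writing out the closedness condition $d\phi=0$ component by component is precisely the system \eqref{equation: phi intertwine a and b}, and since $\phi$ is invertible in $\text{HoTw}(X)$, Proposition~\ref{prop: homotopy invertible morphisms} guarantees that each $\phi^{0,0}_i\colon(E^\bullet|_{U_i},a^{0,1}_i)\to(F_i^\bullet,b^{0,1}_i)$ is a quasi-isomorphism. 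This establishes all the assertions.

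The one genuinely delicate part is the homological bookkeeping just used: identifying $\mathcal S\circ\mathcal T$ with sheafification and verifying that every sheaf that occurs ($\mathcal A$-modules, their restrictions, their pushforwards along the $\iota_{i_0\ldots i_k}$) is soft and hence $\Gamma(X,-)$-acyclic. This is exactly where softness of $\mathcal A$ enters, through the facts collected in Appendix~\ref{appendix: some generalities of soft sheaves}; goodness of the cover is used (via Proposition~\ref{prop: homotopy invertible morphisms}) to read off that $\phi$ is a weak equivalence, and finiteness of the cover keeps $T^\bullet$ bounded. If one prefers not to route through the abstract equivalence $\mathcal S$, the same $\phi$ can be built semi-explicitly: take $\phi^{0,0}_i$ to be $q$ followed by the projection onto the \v{C}ech-degree-$0$ summand $F_i^\bullet$ (which one checks is a quasi-isomorphism), and determine the $\phi^{k,-k}$ by solving \eqref{equation: phi intertwine a and b} inductively in $k$; the obstruction to passing from $\phi^{<k}$ to $\phi^{\le k}$ is a $\delta$-closed \v{C}ech $k$-cochain valued in Hom-sheaves $\underline{\text{Hom}}^\bullet(\mathcal E,\mathcal F_{i_0})$, which vanishes in cohomology because these Hom-sheaves are again $\mathcal A$-modules, hence soft, so that the only surviving \v{C}ech cohomology of $\mathcal U$ lives in degree $0$. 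Either way finiteness of the cover makes the construction terminate.
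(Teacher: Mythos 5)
Your argument is essentially correct but takes a genuinely different route from the paper. The paper's proof is constructive: it builds $E^\bullet$ and $\phi$ by a downward induction on degree, at each stage gluing together extensions of kernel sheaves $\ker c^{0,1}_i$ (via a partition of unity supplied by softness) into a direct sum $E^m=\bigoplus_i \widetilde{\ker c^{0,1}_i}$, and solving the intertwining equations explicitly; the key technical tools are Lemma~\ref{lemma: gluing bundles by transition function modulo Q} and Lemma~\ref{lemma: higher gluing bundles by twisted cochains}, in the spirit of Thomason's Lemma~1.9.5. You instead go abstract: push $\mathcal F$ through the equivalence $\mathcal S$ of Theorem~\ref{thm: twisted complexes gives an dg-enhancement in general}, take global sections using softness and Theorem~\ref{thm: local perfect is global perfect}, realize the result as a bounded complex of finitely generated projectives, de-globalize, and then argue that $\mathcal S\circ\mathcal T$ agrees with sheafification so that $\mathcal S$ being an equivalence forces an isomorphism $\mathcal T(E^\bullet)\cong\mathcal F$ in $\text{HoTw}(X)$, after which any closed degree-$0$ representative $\phi$ satisfies both required conditions. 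This is shorter and more modular, but it leans on two facts that you assert rather than prove: (i) the compatibility of $\mathcal S\circ\mathcal T$ with sheafification, which depends on the concrete Čech-complex model of $\mathcal S$ from \cite{wei2016twisted} and on vanishing of higher Čech cohomology for modules over a soft sheaf of rings, and (ii) the Serre--Swan-type statement that every finitely generated projective $\mathcal A(X)$-module is $\Gamma$ of a finitely generated locally free $\mathcal A$-module --- a direction not literally contained in Theorem~\ref{thm: local perfect is global perfect}(2) as stated, though it is standard for soft $\mathcal A$ and is already implicitly used when the paper claims that $\mathfrak L(X,\mathcal A)$ enhances $D_{\text{perf}}(\text{Mod-}\mathcal A(X))$. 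Neither is a gap, but both deserve a sentence of justification. What your route buys is brevity and conceptual transparency; what the paper's route buys is an explicit, self-contained construction of $E^\bullet$ and $\phi$ that does not presuppose the dg-enhancement equivalence and gives concrete control over the ranks and shape of the resolving complex. One small remark on your claim that softness ``is preserved by restriction to opens and by pushforward along the inclusions'': for an arbitrary soft sheaf these are delicate assertions, but here the cleaner argument is the one you gesture at via the appendix --- every pushforward in sight is still an $\mathcal A$-module, and over a soft sheaf of rings on a paracompact space every module is fine (Proposition~\ref{prop: sheaf of modules of soft rings is fine}), hence soft and $\Gamma$-acyclic.
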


The rest of this subsection devotes to the proof of Proposition \ref{prop: descent of smooth complex on Cech cover}. First we introduce the following definitions.

\begin{defi}\label{defi: descent data module Q}
Let $P_i,~Q_i$ be  sheaves of $\mathcal{A}$-modules on $U_i$. A  \emph{descent data of $P_i$ modulo $Q_i$} consists of the following data
\begin{itemize}
\item a collection of $\mathcal{A}$-module maps
$$
\tau_i: Q_i\rightarrow P_i;
$$

\item a collection of $\mathcal{A}$-module maps
$$
\theta_{ji}: P_i|_{U_{ji}}\rightarrow P_j|_{U_{ji}};
$$
\item a collection of $\mathcal{A}$-module maps
$$
\vartheta_{kji}: P_i|_{U_{kji}}\rightarrow Q_k|_{U_{kji}};
$$
\end{itemize}
which satisfy the following conditions
\begin{enumerate}
\item  $\theta_{ji}$ is a cocycle module $Q_i$, more precisely
\begin{equation}\label{equation: transittion function modulo Q}
\theta_{ki}-\theta_{kj}\theta_{ji}=\tau_k \vartheta_{kji};
\end{equation}

\item $
\theta_{ii}=id_{P_i}$.
\end{enumerate}
\end{defi}

\begin{defi}\label{defi: descent of P to X module Q}
Let $P_i,~Q_i$ be  sheaves of $\mathcal{A}$-modules on $U_i$. A  \emph{descent of $P_i$ to $X$ modulo $Q_i$} consists of a   sheaf of $\mathcal{A}$-modules $R$ on $X$ together with the following data
\begin{itemize}
\item a collection of $\mathcal{A}$-module maps
$$
\tau_i: Q_i\rightarrow P_i;
$$

\item a collection of $\mathcal{A}$-module maps
$$
\psi_i: R|_{U_i}\rightarrow P_i
$$
\item a collection of $\mathcal{A}$-module maps
$$\xi_{ji}: R|_{U_{ji}}\rightarrow Q_j|_{U_{ji}};$$
\end{itemize}
which satisfy the following conditions
\begin{enumerate}
\item  On $U_{ji}$ the maps $\psi_i$ and $\psi_j$ are compatible with the transition function $\theta_{ji}$ modulo $Q_i$, i.e.
$$
\psi_j-\theta_{ji}\psi_i=\tau_j \xi_{ji}
$$

\item Each of the $\psi_i$ is surjective modulo $Q_i$ in the following sense: For any point $x\in U_i$ and $u_i$ in the fiber $P_i|_x$, there exist an $v\in R|_x$ and a $w_i\in Q_i|_x$, such that
$$
u_i=\psi_i(v)-\tau_i(w_i).
$$
\end{enumerate}
\end{defi}

We have the following lemma on the existence of descent modulo $Q_i$.
\begin{lemma}\label{lemma: gluing bundles by transition function modulo Q}
Let $P_i,~Q_i$ be locally free finitely generated sheaves of $\mathcal{A}$-modules on $U_i$ with a descent data of $P_i$ modulo $Q_i$.
Then there exists a locally free finitely generated sheaf of $\mathcal{A}$-modules $R$ on $X$ which is a descent of $P_i$ to $X$ modulo $Q_i$.
\end{lemma}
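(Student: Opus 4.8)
The plan is to build $R$ as a quotient of a direct sum built from the $P_i$'s and the $Q_i$'s, mimicking the classical construction of a sheaf glued from transition functions, but now allowing the cocycle condition to fail by a controlled amount (the $\tau_k\vartheta_{kji}$ term). Concretely, let $\{\rho_i\}$ be a partition of unity subordinate to the locally finite cover $\{U_i\}$ — this exists because $X$ is paracompact and $\mathcal{A}$ is soft, so $\mathcal{A}$ admits partitions of unity (see the appendix on soft sheaves). First I would form the sheaf $\widetilde{R} := \bigoplus_i (j_i)_!(P_i)$ — or, to keep everything locally free and finitely generated, work instead with the honest sheaf-theoretic pushforward/extension and define $R$ on an open $U$ by prescribing its sections to be tuples $(s_i)_i$ with $s_i \in P_i(U\cap U_i)$ subject to the gluing relations $s_j = \theta_{ji} s_i$ on triple-overlap-free regions, corrected by $Q$-valued terms. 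The cleanest route is: set $R := \mathrm{coker}\big(\bigoplus_{i<j} (j_{ij})_*(Q_i|_{U_{ij}}) \xrightarrow{\ \mu\ } \bigoplus_i (j_i)_*(P_i)\big)$ where $\mu$ on the $(i,j)$ summand sends $w$ to $\tau_i(w)$ in the $i$-slot and $-\theta_{ji}\tau_i(w)$-type corrections in the others; then define $\psi_i : R|_{U_i} \to P_i$ using the partition of unity, $\psi_i((s_k)_k) := \sum_k \rho_k\,\theta_{ik}(s_k)$, interpreting $\theta_{ik}$ via the descent data and noting $\theta_{ii}=\mathrm{id}$.

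The key steps, in order, are: (1) show $R$ so defined is a sheaf of $\mathcal{A}$-modules on $X$; (2) produce $\tau_i$ (already given) and the maps $\psi_i$ and $\xi_{ji}$, and verify the compatibility $\psi_j - \theta_{ji}\psi_i = \tau_j\xi_{ji}$ — here the failure of the cocycle condition \eqref{equation: transittion function modulo Q} is exactly absorbed into $\xi_{ji}$, which one writes down explicitly as $\xi_{ji} = \sum_k \rho_k\,\vartheta_{jik}\,(\text{restriction})$ using the partition of unity and the relation $\theta_{ki}-\theta_{kj}\theta_{ji}=\tau_k\vartheta_{kji}$; (3) verify surjectivity of $\psi_i$ modulo $Q_i$ at each stalk — at $x\in U_i$, given $u_i\in P_i|_x$, lift it to the tuple supported in the $i$-slot, apply $\psi_i$, and check the discrepancy lies in $\tau_i(Q_i|_x)$; (4) show $R$ is locally free and finitely generated: locally on a small enough neighborhood where the cover and partition of unity simplify, $R$ is an extension/quotient of locally free finitely generated sheaves in a way that splits (again using softness of $\mathcal{A}$, which kills $\mathrm{Ext}^1$ of $\mathcal{A}$-modules and guarantees that surjections of locally free modules split locally), so $R$ is a direct summand of a free module of finite rank, hence locally free finitely generated.

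The main obstacle I expect is step (4), establishing that $R$ is genuinely locally free and finitely generated rather than merely a coherent-type $\mathcal{A}$-module. The point is that the quotient construction a priori only gives a quotient of locally free sheaves, and one must argue that the kernel is a direct summand. This is where softness is essential: over a soft sheaf of rings, locally (indeed globally, on compact pieces) every short exact sequence of the relevant modules splits, so the quotient stays in the category of locally free finitely generated sheaves. I would isolate this as the technical heart of the proof, invoking the appendix results on soft sheaves — in particular that a soft $\mathcal{A}$-module is acyclic and that finitely generated locally free $\mathcal{A}$-modules form a subcategory closed under the operations used here. The remaining verifications in steps (1)–(3) are diagram-chase bookkeeping with the partition of unity and should be routine once the setup is fixed.
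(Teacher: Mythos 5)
Your proposal diverges from the paper's proof in the construction of $R$, and that divergence introduces a real gap. The paper does \emph{not} build $R$ as a cokernel or quotient. It extends each $P_i$ from the contractible open $U_i$ to a locally free finitely generated sheaf $\widetilde{P_i}$ on all of $X$ (this uses Lemma~\ref{lemma: extension of a locally free finitely generated sheaf}: on a contractible open a locally free finitely generated sheaf is trivial, so it extends), and then simply sets $R = \bigoplus_i \widetilde{P_i}$. Because the cover is finite, this is a finite direct sum of locally free finitely generated sheaves, so $R$ is automatically locally free and finitely generated --- there is nothing to prove. The maps $\psi_i$ and $\xi_{ji}$ are then defined with a partition of unity essentially as you describe, and the verifications of surjectivity modulo $Q_i$ and compatibility modulo $Q_j$ go through as ``diagram-chase bookkeeping.''

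The problem with your route is precisely the step you flag as the technical heart. First, the building blocks $(j_i)_*(P_i)$ (pushforward along an open inclusion) are generally \emph{not} locally free on $X$; locally free is preserved by extension over a contractible open via trivialization, not by $j_*$. Second, even granting some repaired construction, your cokernel
$$
R := \mathrm{coker}\Big(\bigoplus_{i<j} (j_{ij})_*(Q_i|_{U_{ij}}) \xrightarrow{\ \mu\ } \bigoplus_i (j_i)_*(P_i)\Big)
$$
is not obviously locally free finitely generated, and the argument you sketch --- that softness of $\mathcal{A}$ ``kills $\mathrm{Ext}^1$'' and forces surjections of locally free modules to split --- is not a result available in the paper and is not true as stated: softness gives $\Gamma$-acyclicity and partitions of unity, not projectivity of locally free $\mathcal{A}$-modules in the sheaf category. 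So step (4) of your plan does not close. The fix is exactly the paper's simplification: do not quotient at all. Take $R$ to be a large direct sum of extended $P_i$'s, accept that its rank is large, and let the $\psi_i$'s (rather than any gluing relation imposed on $R$) carry the descent-modulo-$Q$ structure. The $Q_i$'s never need to enter the construction of $R$ itself; they only enter through $\xi_{ji}$ and $\tau_i$ in the verification that $(R,\tau_i,\psi_i,\xi_{ji})$ satisfies Definition~\ref{defi: descent of P to X module Q}.
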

\begin{proof}
First we construct the sheaf $R$. By Lemma \ref{lemma: extension of a locally free finitely generated sheaf} we can extend each $P_i$ from $U_i$ to a locally free finitely generated sheaf of $\mathcal{A}$-modules on $X$. Let's denote the extension  by $\widetilde{P_i}$. By Proposition \ref{prop: sheaf of modules of soft rings is fine}, $\widetilde{P_i}$ is still fine. Then we define
$$
R=\bigoplus_i \widetilde{P_i}
$$
as a sheaf of $\mathcal{A}$-module on $X$. Since the cover $U_i$ is finite, $R$ is still finitely generated.

Next we construct the maps $\psi_i: R|_{U_i}\rightarrow P_i$. By the definition of $R$, a section $r$ of $R$ is of the form
$$
r=(\widetilde{p_1},\ldots,\widetilde{p_n})
$$
where the $\widetilde{p_j}$'s are  sections of $\widetilde{P_j}$. Then $\widetilde{p_j}|_{U_j}$ is a section of $P_j$. The naive way to define $\psi_i$ is to apply the transition functions $\theta_{ij}$ directly on $\widetilde{p_j}$ and sum them up. The problem is that $\theta_{ij}\widetilde{p_j}$ is defined only on $U_{ij}$ instead of  $U_i$ hence we cannot get a well-defined map

To solve this problem we use Lemma \ref{lemma: partition of unity} to obtain a partition of unity $\{\rho_j\}$ and  we notice that $
\theta_{ij}(\rho_j\widetilde{p_j})=\rho_j\theta_{ij}(\widetilde{p_j})$
 can be extended by $0$ from $U_{ij}$ to $U_i$. Then we define $\psi_i$ by
\begin{equation}\label{equation: construction of psi}
\begin{split}
\psi_i: R|_{U_i}& \rightarrow P_i\\
(\widetilde{p_1},\ldots,\widetilde{p_n})&\mapsto \sum_j \theta_{ij}(\rho_j\widetilde{p_j}).
\end{split}
\end{equation}

It is obvious that $\psi_i$ is a smooth bundle map. Now we need to prove that $\psi_i$ has the required properties.

First we prove that  $\psi_i$  is surjective modulo $Q_i$. For any point $x\in U_i$ and $u_i$ in the fiber $P_i|_x$ we just define the vectors $v_j\in P_j|_x$ as
$$
v_j=\begin{cases}\theta_{ji} (u_i) &\text{ if } x\in U_j \\
0 &\text{ if } x \not\in U_j
\end{cases}
$$
and $v\in R|_x$ as $v=(v_1,\ldots v_n)$. Then
$$
\psi_i(v)=\sum_j\rho_j \theta_{ij}(v_j)
=\sum_j\rho_j \theta_{ij}\theta_{ji} (u_i).
$$

Now we use the "cocycle modulo $Q_i$" condition on $\theta_{ij}$'s. By Equation (\ref{equation: transittion function modulo Q})  we know that
$$
\theta_{ij}\theta_{ji} =\theta_{ii}-\tau_i\vartheta_{iji}=id-\tau_i\vartheta_{iji},
$$
hence
\begin{equation*}
\begin{split}
\psi_i(v)=&\sum_j \rho_j u_i-\sum_j\rho_j \tau_i\vartheta_{iji}(u_i)\\
=& u_i-\tau_i(\sum_j\rho_j \vartheta_{iji}(u_i)).
\end{split}
\end{equation*}
Take $w_i=\sum_j\rho_j \vartheta_{iji}(u_i)\in Q_i|_x$ we have proved the surjectivity of $\psi_i$.

The proof of the compatibility of $\psi_i$ and $\psi_j$ is similar. We define the map $\xi_{ji}: R|_{U_{ji}}\rightarrow Q_j|_{U_{ji}}$ as follows: for any $r=(\widetilde{p_1},\ldots,\widetilde{p_n})$ a section of $R|_{U_{ji}}$ define
$$
\xi_{ji}(\widetilde{p_1},\ldots,\widetilde{p_n}):=\sum_k \vartheta_{jik}(\rho_k\widetilde{p_k})=\sum_k \rho_k\vartheta_{jik}(\widetilde{p_k}).
$$
Then
$$
\theta_{ji}\psi_i(\widetilde{p_1},\ldots,\widetilde{p_n})=\theta_{ji} \sum_k \rho_k\theta_{ik}\widetilde{p_k}=\sum_k \rho_k\theta_{ji}\theta_{ik}\widetilde{p_k}.
$$
Again by Equation (\ref{equation: transittion function modulo Q}) we have
$$
\theta_{ji}\theta_{ik}=\theta_{jk}-\tau_j \vartheta_{jik}
$$
hence
\begin{equation*}
\begin{split}
\theta_{ji}\psi_i(\widetilde{p_1},\ldots,\widetilde{p_n})=&\sum_k \rho_k(\theta_{jk}\widetilde{p_k}-\tau_j \vartheta_{jik}\widetilde{p_k})\\
=& \sum_k \rho_k\theta_{jk}\widetilde{p_k}-\sum_k \rho_k\tau_j \vartheta_{jik}\widetilde{p_k}\\
=& \sum_k \rho_k\theta_{jk}\widetilde{p_k}-\tau_j(\sum_k \rho_k\vartheta_{jik}\widetilde{p_k})\\
=& \psi_j (\widetilde{p_1},\ldots,\widetilde{p_n})-\tau_j \xi_{ji}(\widetilde{p_1},\ldots,\widetilde{p_n}).
\end{split}
\end{equation*}
hence we get the compatibility of $\psi_i$ and $\psi_j$ .
\end{proof}

\begin{rmk}
Lemma \ref{lemma: gluing bundles by transition function modulo Q} requires $X$ to be compact. More precisely it requires the cover $U_i$ to be finite. The sheaf $R$ constructed in the proof has a very large rank over $\mathcal{A}$ but it is still locally free and finitely generated.
\end{rmk}

To prove Proposition \ref{prop: descent of smooth complex on Cech cover}, we need the following lemma, which is a variation of Lemma \ref{lemma: gluing bundles by transition function modulo Q}.
\begin{lemma}\label{lemma: higher gluing bundles by twisted cochains}
Let $(F^{\bullet}_i,b)$ be a twisted perfect complex for $U_i$. If the number $l$ has the property that for any $n>l$ and any $i$, the cohomology
$H^n(F^{\bullet}_i, b^{0,1}_i)$ vanishes, then there exists a locally free finitely generated sheaf of $\mathcal{A}$-modules $E^l$ on $X$ together with  $\mathcal{A}$-module maps
$$
\psi^{k,-k}: E^l|_{U_{i_0\ldots i_k}}\rightarrow F^{l-k}_{i_0}|_{U_{i_0\ldots i_k}}
$$
which satisfy the following two conditions:
\begin{enumerate}\label{equation: Maurer-Cartan in lemma}
\item For each $i$, the map $\psi^{0,0}: E^l|_{U_i}\rightarrow F^l_i$ has its image in $\ker b^{0,1}_i\subset F^l_i$ and is surjective  modulo $F^{l-1}_i$ in the same sense as Condition 1 in Lemma \ref{lemma: gluing bundles by transition function modulo Q}.

\item On each $U_{i_0\ldots i_k}$ the $\psi$'s satisfy the equation
\begin{equation}
\delta \psi^{k-1,1-k}+(-1)^{k}\psi^{k-1,1-k}_{i_0\ldots i_{k-1}}|_{U_{i_0\ldots i_{k}}}+\sum_{p=0}^k b^{p,1-p}\,\psi^{k-p,p-k}=0
\end{equation}

\end{enumerate}
\end{lemma}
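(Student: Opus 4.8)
The plan is to rerun the proof of Lemma~\ref{lemma: gluing bundles by transition function modulo Q}, with the single transition cocycle $\theta_{ji}$ replaced by the whole tower $b^{1,0},b^{2,-1},b^{3,-2},\dots$ of the twisted complex $(F^\bullet_i,b)$. Set $Z^l_i:=\ker\bigl(b^{0,1}_i:F^l_i\to F^{l+1}_i\bigr)$ and $Q_i:=F^{l-1}_i$. First I would check that $Z^l_i$ is again finitely generated and locally free: since the $F^n_i$ are finitely generated locally free and $H^n(F^\bullet_i,b^{0,1}_i)=0$ for all $n>l$, the complex $0\to Z^{l+1}_i\to F^{l+1}_i\to F^{l+2}_i\to\cdots$ is exact, so splitting it off from the top makes $Z^{l+1}_i$ finitely generated locally free, and then $0\to Z^l_i\to F^l_i\to Z^{l+1}_i\to 0$ splits locally and does the same for $Z^l_i$. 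Next I would observe that the $Z^l_i$ carry a descent datum modulo $Q_i$ in the sense of Definition~\ref{defi: descent data module Q}: $\tau_i:=b^{0,1}_i$ maps $Q_i$ into $Z^l_i$ because $(b^{0,1}_i)^2=0$; the order-$1$ Maurer--Cartan equation~(\ref{equation: MC for twisted complex explicit}) says $b^{1,0}_{ij}$ commutes with the differentials $b^{0,1}$, so $\theta_{ji}:=b^{1,0}_{ji}$ carries $Z^l_i$ into $Z^l_j$; and the order-$2$ Maurer--Cartan equation, restricted to $Z^l$ (where the summand $b^{2,-1}b^{0,1}$ dies), is precisely the ``cocycle modulo $Q_i$'' relation~(\ref{equation: transittion function modulo Q}) with $\vartheta_{kji}:=b^{2,-1}_{kji}\big|_{Z^l_i}$. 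As in Lemma~\ref{lemma: gluing bundles by transition function modulo Q} I would use the normalization $b^{1,0}_{ii}=\mathrm{id}$ here.

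Following the proof of Lemma~\ref{lemma: gluing bundles by transition function modulo Q}, I would then extend each $Z^l_i$ to a finitely generated locally free sheaf $\widetilde{Z^l_i}$ on $X$, put $E^l:=\bigoplus_i\widetilde{Z^l_i}$ (finitely generated because the cover is finite, and locally free), and fix a partition of unity $\{\rho_i\}$ subordinate to $\{U_i\}$. For a local section $r=(\widetilde{z_j})_j$ of $E^l$ I would define
$$\psi^{k,-k}_{i_0\ldots i_k}(r):=\varepsilon_k\sum_j b^{\,k+1,-k}_{i_0\ldots i_k\,j}\bigl(\rho_j\,\widetilde{z_j}\bigr),$$
each term extended by zero from $U_{i_0\ldots i_k j}$ to $U_{i_0\ldots i_k}$, with signs $\varepsilon_k\in\{\pm1\}$ to be chosen. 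For $k=0$ this is $\psi^{0,0}_i(r)=\sum_j b^{1,0}_{ij}(\rho_j\widetilde{z_j})$, which is exactly formula~(\ref{equation: construction of psi}) for the datum above, and for $k=1$ it reproduces up to sign the map $\xi_{ji}$ produced by Lemma~\ref{lemma: gluing bundles by transition function modulo Q}; thus $(E^l,\psi^{0,0})$ is a descent of $Z^l_i$ to $X$ modulo $Q_i$. Condition~1 of the lemma is then immediate: $b^{0,1}_i\psi^{0,0}_i(r)=\sum_j b^{1,0}_{ij}\bigl(\rho_j\,b^{0,1}_j\widetilde{z_j}\bigr)=0$ since $\widetilde{z_j}|_{U_j}\in\ker b^{0,1}_j$, and the surjectivity of $\psi^{0,0}_i$ modulo $F^{l-1}_i$ is exactly the surjectivity clause of Lemma~\ref{lemma: gluing bundles by transition function modulo Q}.

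For Condition~2 I would substitute the formulas above into the left-hand side of the equation in Condition~2 and identify each piece: $\delta\psi^{k-1,1-k}$ supplies the interior \v{C}ech faces, the term $(-1)^k\psi^{k-1,1-k}_{i_0\ldots i_{k-1}}\big|$ supplies the last face (it is the $\Psi\cdot a$ contribution coming from $a^{1,0}=\mathrm{id}$ when one views $\Psi=\sum_k\psi^{k,-k}$ as a degree-zero morphism from $\mathcal{T}$ of $E^l$ placed in degree $l$ to $\mathcal{F}$), the $p=1$ term $b^{1,0}_{i_0i_1}\psi^{k-1,1-k}_{i_1\ldots i_k}$ supplies the first face, and each $b^{p,1-p}\psi^{k-p,p-k}$ becomes $\varepsilon_{k-p}\sum_j b^{p,1-p}_{i_0\ldots i_p}b^{\,k-p+1,p-k}_{i_p\ldots i_k\,j}(\rho_j\widetilde{z_j})$. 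Pulling $\sum_j(-)(\rho_j\widetilde{z_j})$ out front, the resulting operator is, up to the composition signs $(-1)^{qr}$ of~(\ref{equation: composition of maps between graded sheaves}), the $(k{+}1,1{-}k)$-component of $\delta b^{k,1-k}+b\cdot b$, which vanishes on $Z^l$ by the order-$(k{+}1)$ Maurer--Cartan equation~(\ref{equation: MC for twisted complex explicit}) (once more the term $b^{k+1,-k}b^{0,1}$ drops on $Z^l$). I expect the one genuinely non-routine point to be precisely this matching of signs: choosing the $\varepsilon_k$ (and if necessary refining the formula) so that the \v{C}ech signs in $\delta$, the shift sign in the $(-1)^k$-face, and the composition signs $(-1)^{qr}$ all conspire to turn the order-$(k{+}1)$ Maurer--Cartan identity for $b$ into exactly the order-$k$ identity for the $\psi$'s; the rest of the argument is a direct rerun of the proof of Lemma~\ref{lemma: gluing bundles by transition function modulo Q}.
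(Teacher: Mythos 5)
Your proposal follows essentially the same route as the paper's proof: you take the same $E^l=\bigoplus_i\widetilde{\ker b^{0,1}_i}$, the same formula $\psi^{k,-k}_{i_0\ldots i_k}(r)=\sum_j b^{k+1,-k}_{i_0\ldots i_k j}(\rho_j\widetilde{z_j})$, and the same reduction of Condition~2 to the order-$(k{+}1)$ Maurer--Cartan identity for $b$ with the $b^{k+1,-k}_{i_0\ldots i_k j}b^{0,1}_j$ term annihilated by $\rho_j\widetilde{z_j}\in\ker b^{0,1}_j$. The paper's explicit computation resolves the one point you leave open: with its conventions all your placeholder signs may be taken to be $\varepsilon_k=1$, so the unadorned formula already satisfies Condition~2, and your reformulation of the degree-$1$ and degree-$2$ Maurer--Cartan relations as a descent datum modulo $Q_i=F^{l-1}_i$ is a clean reading of what the paper does implicitly.
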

\begin{proof}
First by Corollary \ref{coro: kernel of acyclic above complex is a vector bundle} we know that $\ker b^{0,1}_i$ is a locally free finitely generated sub-sheaf of $F^l_i$. Then similar to the proof of  Lemma \ref{lemma: gluing bundles by transition function modulo Q}, let's first extend $\ker b^{0,1}_i$ on $U_i$ to $\widetilde{\ker b^{0,1}_i}$ on $X$ and define
\begin{equation}\label{equation: construction of E^l}
E^l=\bigoplus_i \widetilde{\ker b^{0,1}_i}.
\end{equation}

The construction of the map $\psi^{k,-k}$ is also similar to the construction of $\psi_i$ in Lemma \ref{lemma: gluing bundles by transition function modulo Q}. Let $(\widetilde{f_1},\ldots,\widetilde{f_n})$ be a section of $E^l$, we define (see Equation (\ref{equation: construction of psi})) $\psi^{k,-k}$ as
\begin{equation}\label{equation: construction of psi in lemma}
\begin{split}
\psi^{k,-k}: E^l|_{U_{i_0\ldots i_k}}&\rightarrow F^{l-k}_{i_0}|_{U_{i_0\ldots i_k}}\\
(\widetilde{f_1},\ldots,\widetilde{f_n})&\mapsto \sum_j b^{k+1,-k}_{i_0\ldots i_k j}(\rho_j \widetilde{f_j}).
\end{split}
\end{equation}

In particular
\begin{equation}\label{equation: construction of psi00}
\psi^{0,0}(\widetilde{f_1},\ldots,\widetilde{f_n})=\sum_j b^{1,0}_{i j}(\rho_j \widetilde{f_j}).
\end{equation}
We know that $b^{0,1}b^{1,0}+b^{1,0}b^{0,1}=0$ hence
\begin{equation*}
\begin{split}
b^{0,1}_i\psi^{0,0}(\widetilde{f_1},\ldots,\widetilde{f_n})=& \sum_jb^{0,1}_i b^{1,0}_{i j}(\rho_j \widetilde{f_j})\\
=&-\sum_jb^{1,0}_{ij} b^{0,1}_j(\rho_j \widetilde{f_j}).
\end{split}
\end{equation*}
We know each of the $\rho_j \widetilde{f_j}$ belongs to $\ker b^{0,1}_j$ so the right hand side is zero, hence $\psi^{0,0}(\widetilde{f_1},\ldots,\widetilde{f_n})\in \ker b^{0,1}_i$.

Moreover, $\psi^{0,0}$ is surjective modulo $F^{l-1}_i$ by exactly the same argument as in Lemma \ref{lemma: gluing bundles by transition function modulo Q}.

Then we need to prove that $\psi^{k,-k}$ satisfies Condition 2. First we notice that on $U_{i_0\ldots i_k}$
\begin{equation*}
\begin{split}
(\delta\psi^{k-1,1-k})_{i_0\ldots i_k}(\widetilde{f_1},\ldots,\widetilde{f_n})=&\sum_{s=1}^{k-1} (-1)^s\psi^{k-1,1-k}_{{i_0\ldots \widehat{i_s}\ldots i_k}} (\widetilde{f_1},\ldots,\widetilde{f_n})\\
=& \sum_j \sum_{s=1}^{k-1}(-1)^s b^{k,1-k}_{{i_0\ldots \widehat{i_s}\ldots i_k}j}(\rho_j \widetilde{f_j})
\end{split}
\end{equation*}
and
\begin{equation*}
\begin{split}
(b^{p,1-p}\,\psi^{k-p,p-k})_{i_0\ldots i_k}(\widetilde{f_1},\ldots,\widetilde{f_n})=&b^{p,1-p}_ {i_0\ldots i_p} \psi^{k-p,p-k}_{{i_p\ldots i_k}} (\widetilde{f_1},\ldots,\widetilde{f_n})\\
=& \sum_j b^{p,1-p}_ {i_0\ldots i_p} b^{k-p+1,p-k}_{i_p\ldots i_k j}(\rho_j \widetilde{f_j}).
\end{split}
\end{equation*}
Moreover we have
$$
\psi^{k-1,1-k}_{i_0\ldots i_{k-1}}|_{U_{i_0\ldots i_{k}}}(\widetilde{f_1},\ldots,\widetilde{f_n})=  \sum_j  b^{k,1-k}_{i_0\ldots i_{k-1}j}(\rho_j \widetilde{f_j}).
$$

Sum them up we get
\begin{equation}\label{equation: computation of Maurer-Cartan in lemma}
\begin{split}
&(\delta \psi^{k-1,1-k}+(-1)^{k}\psi^{k-1,1-k}_{i_0\ldots i_{k-1}}|_{U_{i_0\ldots i_{k}}}+\sum_{p=0}^k b^{p,1-p}\,\psi^{k-p,p-k})(\widetilde{f_1},\ldots,\widetilde{f_n})\\
=& \sum_j\Big( \sum_{s=1}^{k}(-1)^s b^{k,1-k}_{{i_0\ldots \widehat{i_s}\ldots i_k}j}(\rho_j \widetilde{f_j})+\sum_{p=0}^k b^{p,1-p}_ {i_0\ldots i_p} b^{k-p+1,p-k}_{i_p\ldots i_k j}(\rho_j \widetilde{f_j})\Big)\\
=& \sum_j\Big( (\delta b^{k,1-k})_{i_0\ldots i_kj}(\rho_j \widetilde{f_j})+\sum_{p=0}^k b^{p,1-p}_ {i_0\ldots i_p} b^{k-p+1,p-k}_{i_p\ldots i_k j}(\rho_j \widetilde{f_j})\Big)
\end{split}
\end{equation}
Since $(F_i,b^{\bullet,1-\bullet})$ is a twisted complex, we know that on $U_{i_0\ldots i_kj}$ we have
$$
(\delta b^{k,1-k})_{i_0\ldots i_kj} +\sum_{p=0}^k b^{p,1-p}_ {i_0\ldots i_p} b^{k-p+1,p-k}_{i_p\ldots i_k j}+b^{k+1,-k}_{i_0\ldots i_kj}b^{0,1}_j=0
$$
hence the right hand side of Equation (\ref{equation: computation of Maurer-Cartan in lemma}) equals to
$$
-\sum_jb^{k+1,-k}_{i_0\ldots i_kj}b^{0,1}_j (\rho_j \widetilde{f_j}).
$$
We remember that by the definition $\rho_j \widetilde{f_j}$ is in $\ker b^{0,1}_j$ hence the above expression vanishes and the maps $\psi^{k,-k}$'s satisfy Condition 2 .
\end{proof}

\emph{The proof of Proposition} \ref{prop: descent of smooth complex on Cech cover}: We  know that each $F^{\bullet}_i$ is bounded on both directions and we have finitely many $F^{\bullet}_i$'s. Let $n$ be the largest integer such that there exists a $U_i$ such that $F^n_i\neq 0$.

Now we apply Lemma \ref{lemma: higher gluing bundles by twisted cochains} to  $F^n_i$. We notices that it automatically satisfies the condition in Lemma \ref{lemma: higher gluing bundles by twisted cochains}, as a result we can find a locally free finitely generated sheaf of $\mathcal{A}$-modules $E^n$ on $X$ together with $\mathcal{A}$-module maps
$$
\phi^{k,-k}: E^n|_{U_{i_0\ldots i_k}}\rightarrow F^{n-k}_{i_0}|_{U_{i_0\ldots i_k}}
$$
such that $\phi^0_i$ is surjective to $F^n_i$ modulo $F^{n-1}_i$ (here $b^{0,1}=0$ on $F^n_i$ so $\ker b^{0,1}_i=F^n_i$) and the $\phi^{k,-k}$'s satisfies Equation (\ref{equation: Maurer-Cartan in lemma}). We put $E^k=0$ for $k>n$.

Then we use downwards induction on the lower bound of $E^{\bullet}$: Assume we have
\begin{enumerate}
\item A cochain complex  of locally free finitely generated sheaves of $\mathcal{A}$-modules  $E^{\bullet}$ with lower bound $\geq m+1$ and upper bound $n$. Let $a^{0,1}$ denote the differentials on the $E^{\bullet}$'s.
\item For each $m+1\leq l\leq n$ and any $k\geq 0$ we have $\mathcal{A}$-module maps
$$
\phi^{k,-k}: E^l|_{U_{i_0\ldots i_k}}\rightarrow F^{l-k}_{i_0}|_{U_{i_0\ldots i_k}}
$$
which are compatible with  $b^{\bullet,1-\bullet}$ and $a^{0,1}$ in the sense of Equation (\ref{equation: phi intertwine a and b}).

\item The map induced by $\phi^{0,0}$ between cohomologies
$$
\phi^{0,0}_i: H^l(E^{\bullet}|_{U_i},a^{0,1}|_{U_i})\rightarrow H^l(F^{\bullet},b^{0,1}_i)
$$
is an isomorphism for all $l>m+1$ and is surjective for $l=m+1$.
\end{enumerate}

We want to proceed from $m+1$ to $m$. First we construct the sheaf $E^m$. For this purpose we temporarily let $E^k=0$ for all $k\leq m$ and we consider mapping cone $(G^{\bullet},c^{\bullet,1-\bullet})$ of the $\phi: \mathcal{T}(E)\to F$. According to Definition \ref{defi: mapping cone} we have
\begin{equation}
\begin{split}
G^l_i=&E^{l+1}|_{U_i}\oplus F^l_i \text{ for } l\geq m\\
G^l_i=& F^l_i \text{ for } l<m
\end{split}
\end{equation}
and
\begin{equation}\label{equation: construction of c^{k,1-k}}
\begin{split}
c^{0,1}_i=&\begin{pmatrix} a^{0,1}_i&0\\ \phi^{0,0}_i&b^{0,1}_i\end{pmatrix}\\
c^{1,0}_{ij}=&\begin{pmatrix}-id_{U_{ij}}&0\\ \phi^{1,-1}_{ij}&b^{1,0}_{ij}\end{pmatrix}\\
c^{k,1-k}_{i_0\ldots i_k}=&\begin{pmatrix}0&0\\ \phi^{k,-k}_{i_0\ldots i_k}&b^{k,1-k}_{i_0\ldots i_k}\end{pmatrix} \text{ for } k>1.
\end{split}
\end{equation}

The complex $(G^{\bullet}_i, c^{0,1}_i)$ is the ordinary mapping cone of the cochain map $ \phi^{0,0}_i: E^{\bullet}|_{U_i}\rightarrow F^{\bullet}_i$. By Assumption 3 above,  the map $\phi^{0,0}_i$ induces an isomorphism on cohomology for degree $>m+1$ and is surjective on degree $=m+1$. Hence the cohomology
$$
H^p(G^{\bullet}_i, c^{0,1}_i)=0 \text{ for } p\geq m+1.
$$
Again by Lemma \ref{lemma: higher gluing bundles by twisted cochains} there exists a locally free finitely generated sheaf of $\mathcal{A}$-modules $E^m$ together with $\mathcal{A}$-module maps
$$
\psi^{k,-k}:  E^m|_{U_{i_0\ldots i_k}}\rightarrow G^{m-k}_{i_0}|_{U_{i_0\ldots i_k}}
$$
such that
\begin{enumerate}
\item For each $i$,  the image of the map $\psi^{0,0}_i: E^m|_{U_i}\rightarrow G^m_i$ is contained in $\ker c^{0,1}_i\subset G^m_i=E^{m+1}|_{U_i}\oplus F^m_i$ and $\psi^{0,0}_i$ is onto $\ker c^{0,1}_i$ modulo $G^{m-1}_i=F^{m-1}_i$.

\item On each $U_{i_0\ldots i_k}$ the $\psi$'s satisfy the identity
\begin{equation}\label{equation: in theorem phi intertwine a and c}
(\delta \psi^{k-1,1-k})_{i_0\ldots i_k}+(-1)^{k}\psi^{k-1,1-k}_{i_0\ldots i_{k-1}}|_{U_{i_0\ldots i_{k}}}+\sum_{p=0}^k c^{p,1-p}_{i_0\ldots i_p}\,\psi^{k-p,p-k}_{i_p\ldots i_k}=0
\end{equation}
as maps from $E^m|_{U_{i_0\ldots i_k}}$ to $G^{m+1-k}_{i_0}|_{U_{i_0\ldots i_k}}$.
\end{enumerate}

Since $G^k_i=F^k_i$ for $k<m$, we can write the $\psi$'s more precisely as
\begin{equation}
\begin{split}
\psi^{0,0}_i:  &E^m|_{U_i}\rightarrow E^{m+1}|_{U_i}\oplus F^m_i\\
\psi^{k,-k}_{i_0\ldots i_k}:  &E^m|_{U_{i_0\ldots i_k}}\rightarrow F^{m-k}_{i_0}|_{U_{i_0\ldots i_k}} \text{ for } k\geq 1.
\end{split}
\end{equation}

Now let $p_E$ and $p_F$ denote the projection from $E^{m+1}|_{U_i}\oplus F^m_i$ to $E^{m+1}|_{U_i}$ and $F^m_i$ respectively, then we define $a^{0,1}: E^m\rightarrow E^{m+1}$ as
\begin{equation}
a^{0,1}|_{U_i}=p_E\circ \psi^{0,0}_i.
\end{equation}
We need to prove  that the $p_E\circ \psi^{0,0}_i$'s on different $U_i$'s actually glue together to get the map $a^{0,1}$. In fact in the proof of Lemma \ref{lemma: higher gluing bundles by twisted cochains}, Equation (\ref{equation: construction of E^l}) tells us
\begin{equation}
E^m=\bigoplus_i\widetilde{ \ker c^{0,1}_i}
\end{equation}
According to Equation (\ref{equation: construction of c^{k,1-k}}), the map $c^{0,1}_i: E^{m+1}|_{U_i}\oplus F^m_i\rightarrow E^{m+2}|_{U_i}\oplus F^{m+1}_i$ is given by
\begin{equation*}
c^{0,1}_i=\begin{pmatrix} a^{0,1}_i&0\\ \phi^{0,0}_i&b^{0,1}_i\end{pmatrix}
\end{equation*}
hence an element $g_i=(e_i,f_i)\in E^{m+1}|_{U_i}\oplus F^m_i$ is contained in $\ker c^{0,1}_i$ if and only if
$$
a^{0,1}_i(e_i)=0 \text{ and } \phi^{0,0}_i(e_i)=b^{0,1}_i(f_i)
$$
 Equation (\ref{equation: construction of psi00}) tells us for $(\widetilde{g_1},\ldots,\widetilde{g_n})=((\widetilde{e_1},\widetilde{f_1})\ldots (\widetilde{e_n},\widetilde{f_n}))\in E^m$ we have
\begin{equation*}
\psi^{0,0}_i(\widetilde{g_1},\ldots,\widetilde{g_n})=\sum_j c^{1,0}_{i j}(\rho_j \widetilde{g_j})=\sum_j c^{1,0}_{i j}(\rho_j\widetilde{e_j},\rho_j\widetilde{f_j})).
\end{equation*}
Again Equation (\ref{equation: construction of c^{k,1-k}}) tell us
$$
c^{1,0}_{ij}= \begin{pmatrix}-id_{U_{ij}}&0\\ \phi^{1,-1}_{ij}&b^{1,0}_{ij}\end{pmatrix}
$$
therefore
$$
c^{1,0}_{ij}\begin{pmatrix}\rho_j\widetilde{e_j}\\\rho_j\widetilde{f_j}\end{pmatrix}= \begin{pmatrix}-\rho_j\widetilde{e_j}|_{U_{ij}} \\ \phi^{1,-1}_{ij}(\rho_j\widetilde{e_j}|_{U_{ij}})+ b^{1,0}_{ij}(\rho_j\widetilde{f_j})\end{pmatrix}
$$
hence
\begin{equation}
\psi^{0,0}_i(\widetilde{g_1},\ldots,\widetilde{g_n})=\begin{pmatrix}\sum_j-\rho_j\widetilde{e_j}|_{U_{ij}} \\ \sum_j\phi^{1,-1}_{ij}(\rho_j\widetilde{e_j}|_{U_{ij}})+ b^{1,0}_{ij}(\rho_j\widetilde{f_j})\end{pmatrix}.
\end{equation}
As a result
\begin{equation}
(a^{0,1}|_{U_i})(\widetilde{g_1},\ldots,\widetilde{g_n})=p_E( \psi^{0,0}_i(\widetilde{g_1},\ldots,\widetilde{g_n}))=-\sum_j\rho_j\widetilde{e_j}|_{U_{ij}}
\end{equation}
therefore the $a^{0,1}$ on different $U_i$'s can glue together to an $\mathcal{A}$-module map $a^{0,1}: E^m\rightarrow E^{m+1}$. Moreover, its image $-\sum\rho_j\widetilde{e_j}$ is in the kernel of $a^{0,1}: E^{m+1}\rightarrow E^{m+2}$ hence we have extended the complex $(E^{\bullet}, a^{0,1})$ to degree $=m$.

Next we define $\phi^{k,-k}_{i_0\ldots i_k}: E^m|_{U_{i_0\ldots i_k}}\rightarrow F^{m-k}_{i_0}|_{U_{i_0\ldots i_k}}$ as
\begin{equation}
\begin{split}
\phi^{0,0}_i = p_F\circ \psi^{0,0}_i:& E^m|_{U_i}\rightarrow   F^m_i\\
\phi^{k,-k}_{i_0\ldots i_k}= \psi^{k,-k}_{i_0\ldots i_k}:& E^m|_{U_{i_0\ldots i_k}}\rightarrow F^{m-k}_{i_0}|_{U_{i_0\ldots i_k}} \text{ for } k\geq 1.
\end{split}
\end{equation}

Recall that Equation (\ref{equation: in theorem phi intertwine a and c}) tells us
$$
(\delta \psi^{k-1,1-k})_{i_0\ldots i_k}+(-1)^{k}\psi^{k-1,1-k}_{i_0\ldots i_{k-1}}|_{U_{i_0\ldots i_{k}}}+\sum_{p=0}^k c^{p,1-p}_{i_0\ldots i_p}\,\psi^{k-p,p-k}_{i_p\ldots i_k}=0
$$
as maps from $E^m|_{U_{i_0\ldots i_k}}$ to $G^{m+1-k}_{i_0}|_{U_{i_0\ldots i_k}}$ and we need to show that the above identity implies
\begin{equation}\label{equation: phi intertwine a and b in the theorem}
( \delta \phi^{k-1,1-k})_{i_0\ldots i_k}+\sum_{p=0}^k b^{p,1-p}_{i_0\ldots i_p}\phi^{k-p,p-k}_{i_p\ldots i_k}-\phi^{k,-k}_{i_0\ldots i_k} a^{0,1}_{i_k}-\phi^{k-1,1-k}_{i_0\ldots i_{k-1}}|_{U_{i_0\ldots i_k}}=0.
\end{equation}
Let us check it for $k=1$.
From the definition of the $c^{k,1-k}$'s as in Equation (\ref{equation: construction of c^{k,1-k}}) we can easily deduce that
$$
c^{k,1-k}\psi^{0,0}=\begin{cases}\phi^{k,-k}a^{0,1}+b^{k,1-k}\phi^{0,0} &\text{ if } k\geq 2 \\
(-a^{0,1},  \phi^{1,-1}a^{0,1}+b^{1,0}\phi^{0,0}) &\text{ if } k=1\\
\phi^{0,0}a^{0,1}+b^{0,1}\phi^{0,0}&\text{ if } k=0
\end{cases}
$$
In the $k=1$ case we know that $ \delta \psi^{0,0}$ and $ \delta \phi^{0,0}$ are automatically zero hence Equation (\ref{equation: in theorem phi intertwine a and c}) becomes
\begin{equation*}
\begin{split}
&c^{0,1}_{i_0}\psi^{1,-1}_{i_0i_1}+c^{1,0}_{i_0}\psi^{0,0}_{i_0i_1}-\psi^{0,0}_{i_0}|_{U_{i_0i_1}}\\
=& (0, b^{0,1}_{i_0}\phi^{1,-1}_{i_0i_1})+(-a^{0,1}, b^{1,0}_{i_0i_1}\phi^{0,0}_{i_1}+\phi^{1,-1}_{i_0i_1}a^{0,1})-(-a^{0,1},\phi^{0,0}_{i_0}|_{U_{i_0i_1}})\\
=& (0, b^{0,1}_{i_0}\phi^{1,-1}_{i_0i_1}+b^{1,0}_{i_0i_1}\phi^{0,0}_{i_1}+\phi^{1,-1}_{i_0i_1}a^{0,1}-\phi^{0,0}_{i_0}|_{U_{i_0i_1}})=0.
\end{split}
\end{equation*}
Therefore
$$
b^{0,1}_{i_0}\phi^{1,-1}_{i_0i_1}+b^{1,0}_{i_0i_1}\phi^{0,0}_{i_1}+\phi^{1,-1}_{i_0i_1}a^{0,1}-\phi^{0,0}_{i_0}|_{U_{i_0i_1}}=0
$$
which is exactly Equation (\ref{equation: phi intertwine a and b in the theorem}) in the case $k=1$. The $k=0$ and $k \geq 2$ cases are similar.

We also need to show that after we introduce $E^m$, the map
$$
\phi^{0,0}_i: H^{m+1}(E^{\bullet}|_{U_i},a^{0,1}|_{U_i})\rightarrow H^{m+1}(F^{\bullet},b^{0,1}_i)
$$
becomes an isomorphism and
$$
\phi^{0,0}_i: H^{m}(E^{\bullet}|_{U_i},a^{0,1}|_{U_i})\rightarrow H^{m}(F^{\bullet},b^{0,1}_i)
$$
is surjective.

We already know that in degree $m+1$ the induced map is surjective. Moreover by Lemma \ref{lemma: higher gluing bundles by twisted cochains} we know that $\psi^{0,0}_i: E^m\rightarrow \ker c^{0,1}_i \subset E^{m+1}\oplus F^m_i$ is surjective. In more details if $(e^{m+1},f^m)\in E^{m+1}\oplus F^m_i$ such that
$$
a^{0,1}_i(e^{m+1})=0 \text{ and } \phi^{0,0}_i(e^{m+1})=b^{0,1}_i(f^m)
$$
then there exists an $e^m$ and an $f^{m-1}$ such that $\psi^{0,0}_i(e^m)=(e^{m+1},f^m)+(0,b^{0,1}_i(f^{m-1}))$. We remember that by definition $a^{0,1}=p_E \psi^{0,0}_i$ and $\phi^{0,0}_i=p_F\psi^{0,0}_i$ hence we get
\begin{equation}\label{equation: a(e)=e in the theorem}
a^{0,1}e^m=e^{m+1} \phi^{0,0}_i(e^m)=f^m+b^{0,1}_i(f^{m-1}).
\end{equation}
and
\begin{equation}\label{equation: phi(e)=f in the theorem}
a^{0,1}e^m=e^{m+1} \phi^{0,0}_i(e^m)=f^m+b^{0,1}_i(f^{m-1}).
\end{equation}
Equation (\ref{equation: a(e)=e in the theorem}) implies
$$
\phi^{0,0}_i: H^{m+1}(E^{\bullet}|_{U_i},a^{0,1}|_{U_i})\rightarrow H^{m+1}(F^{\bullet},b^{0,1}_i)
$$
is also injective and hence an isomorphism. To prove $\phi^{0,0}_i: H^{m}(E^{\bullet}|_{U_i},a^{0,1}|_{U_i})\rightarrow H^{m}(F^{\bullet},b^{0,1}_i)$ is surjective,  we consider the special case that $e^{m+1}=0$ and $b^{0,1}_i(f^m)=\phi^{0,0}_i(e^{m+1})=0$. In this case Equation (\ref{equation: phi(e)=f in the theorem}) tells us the map
$$
\phi^{0,0}_i: H^{m}(E^{\bullet}|_{U_i},a^{0,1}|_{U_i})\rightarrow H^{m}(F^{\bullet},b^{0,1}_i)
$$
is surjective. Now we have proved that $(E^{\bullet},a^{0,1})$ satisfies the induction assumption 1, 2 and 3 at degree $m$.

Lastly we need to show that the induction steps eventually stops at some degree. Let $m_0$ be the smallest integer such that $F^{m_0}_i\neq 0$ for some $i$. By downward induction we get a complex $(E^{l},a^{0,1}), ~m_0+1\leq l\leq n$ and maps $\phi^{k,-k}: E\rightarrow F$ which is compatible with the $a$'s and $b$'s and
$$
\phi^{0,0}_i: H^{l}(E^{\bullet}|_{U_i},a^{0,1}|_{U_i})\rightarrow H^{l}(F^{\bullet},b^{0,1}_i)
$$
is an isomorphism for $l>m_0+1$ and is surjective for $l=m_0+1$.

Now we need to construct $E^{m_0}$ and $\psi^{0,0}_i: E^{m_0}|_{U_i}\rightarrow E^{m_0+1}|_{U_i}\oplus F^{m_0}_i$ in a slightly different way. For this we use the map
$$
c^{0,1}_i: E^{m_0+1}|_{U_i}\oplus F^{m_0}_i\rightarrow   E^{m_0+2}|_{U_i}\oplus F^{m_0+1}_i
$$
and we get $\ker c^{0,1}_i $ which is a locally free finitely generated sheaf of $\mathcal{A}$-modules on $U_i$ by  Corollary \ref{coro: kernel of acyclic above complex is a vector bundle}.

Now we want to show that $\ker c^{0,1}_i $ can glue together to get a locally free finitely generated sheaf of $\mathcal{A}$-modules on $X$. In fact the map $c^{1,0}_{ij}$'s satisfy $c^{1,0}_{ik}-c^{1,0}_{ij}c^{1,0}_{jk}+c^{2,-1}_{ijk}c^{0,1}_k-c^{0,1}_ic^{2,-1}_{ijk}=0$. Now we are considering $\ker c^{0,1}_j$ and $F^{m_0-1}_i=0$ hence the last two term vanish and we have
$$
c^{1,0}_{ik}-c^{1,0}_{ij}c^{1,0}_{jk}=0 \text{ on } \ker c^{0,1}_k.
$$
In other words the maps $c^{1,0}_{ij}$'s give us transition function from $\ker c^{0,1}_j$ to $\ker c^{0,1}_i$ and they satisfy the cocycle condition.

Therefore we can define the resulting locally free finitely generated sheaf of $\mathcal{A}$-modules by $E^{m_0}$. In particular we have
$$
E^{m_0}|_{U_i}=\ker c^{0,1}_i\subset E^{m_0+1}|_{U_i}\oplus F^{m_0}_i.
$$
Moreover we define $a^{0,1}: E^{m_0}\rightarrow E^{m_0+1}$ and $\phi^{0,0}_i: E^{m_0}|_{U_i}\rightarrow F^{m_0}_i$ be the projection map onto the first and second component respectively, and the $\phi^{k,-k}$'s are all zero for $k \geq 1$. It is easy to see that $\phi^{0,0}_i$ induces an isomorphism from $H^{m_0}(E^{\bullet}|_{U_i},a^{0,1})$ to $H^{m_0}(F^{\bullet}_i,b^{0,1})$ and we finished the proof.

$\square$

\begin{rmk}
The proof of Proposition \ref{prop: descent of smooth complex on Cech cover} is inspired by the proof of Lemma 1.9.5 in \cite{thomason1990higher}.
\end{rmk}

\subsection{The quasi-fully faithfulness of $\mathcal{T}$}\label{subsection: quasi-fully faithful}
In this subsection we prove that $\mathcal{T}$ is quasi-fully faithful. The dg-functor
 $$
\mathcal{T}:~\mathfrak{L}(X,\mathcal{A})\rightarrow \text{Tw}_{\text{perf}}(X)
 $$
 gives a map between complexes of morphisms
  \begin{equation}\label{equation: restriction of morphisms}
\mathcal{T}:~\mathfrak{L}(X,\mathcal{A})(E^{\bullet}, F^{\bullet})\rightarrow \text{Tw}_{\text{perf}}(X)(\mathcal{T}(E^{\bullet}),\mathcal{T}(F^{\bullet}))
 \end{equation}
where for a morphism $\varphi: E^{\bullet}\rightarrow F^{\bullet}$ of degree $n$, its image $\mathcal{T}(\varphi)$ is an element of total degree $n$ in $C^{\bullet}(\mathcal{U}, \text{Hom}^{\bullet}(\mathcal{T}(E^{\bullet}),\mathcal{T}(F^{\bullet})))$ given by
$$
\mathcal{T}(\varphi)^{0,n}_i=\varphi|_{U_i}
$$
and
$$
\mathcal{T}(\varphi)^{k,n-k}=0,~\forall~k\geq 1.
$$
\begin{prop}[Quasi-fully faithfulness]\label{prop: descent of morphisms to twisted complex}
As in Proposition \ref{prop: descent of smooth complex on Cech cover}, let $X$ is a paracompact space with a soft structure sheaf $\mathcal{A}$. Moreover if the open cover $U_i$ is finite and good, then the  morphism $\mathcal{T}$ in Equation (\ref{equation: restriction of morphisms}) is a quasi-isomorphism of complexes.
\end{prop}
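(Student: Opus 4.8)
The plan is to identify the domain and codomain of $\mathcal{T}$ with, respectively, the global sections of a complex of sheaves and the total \v{C}ech complex of that same complex, and then to run a \v{C}ech-acyclicity argument that uses only the softness of $\mathcal{A}$.

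First I would fix notation: let $\mathcal{E}^n$, $\mathcal{F}^n$ be the finitely generated locally free sheaves of $\mathcal{A}$-modules with $\Gamma(X,\mathcal{E}^n)=E^n$ and $\Gamma(X,\mathcal{F}^n)=F^n$, and set $\mathcal{G}^q=\bigoplus_n\mathcal{H}om_{\mathcal{A}}(\mathcal{E}^n,\mathcal{F}^{n+q})$ (sheaf Hom), a bounded complex of finitely generated locally free $\mathcal{A}$-modules on $X$. Since the global-sections functor is fully faithful on $\mathcal{A}$-modules (Theorem~\ref{thm: local perfect is global perfect}(1)), the source complex $\mathfrak{L}(X,\mathcal{A})(E^\bullet,F^\bullet)$ is canonically $\Gamma(X,\mathcal{G}^\bullet)$ with its internal differential. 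For the target: because $\mathcal{T}(E)$ and $\mathcal{T}(F)$ have $a^{1,0}=b^{1,0}=\mathrm{id}$ and vanishing components in \v{C}ech degree $\ge 2$, the twisted differential \eqref{equation: differential on morphisms of twisted complexes} splits as $(\delta\phi+b^{1,0}\cdot\phi-(-1)^{|\phi|}\phi\cdot a^{1,0})+(b^{0,1}\cdot\phi-(-1)^{|\phi|}\phi\cdot a^{0,1})$; a short check against the composition rule \eqref{equation: composition of maps between graded sheaves} shows the first bracket reinstates the two faces omitted from \eqref{equation: delta on maps}, so it is the full \v{C}ech differential on $C^\bullet(\mathcal{U},\mathcal{G}^q)$, while the second bracket is the internal Hom differential. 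Hence $\text{Tw}_{\text{perf}}(X)(\mathcal{T}(E^\bullet),\mathcal{T}(F^\bullet))$ is the total complex of the bounded double complex $K^{p,q}=C^p(\mathcal{U},\mathcal{G}^q)$ (bounded because the complexes are bounded and the cover is finite), and under these identifications $\mathcal{T}$ is the evident inclusion $\Gamma(X,\mathcal{G}^\bullet)\hookrightarrow C^0(\mathcal{U},\mathcal{G}^\bullet)\hookrightarrow\mathrm{Tot}\,K^{\bullet,\bullet}$.

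It then suffices to show that for each $q$ the \v{C}ech complex $C^\bullet(\mathcal{U},\mathcal{G}^q)$ is a resolution of $\Gamma(X,\mathcal{G}^q)$, i.e.\ the augmented complex $0\to\Gamma(X,\mathcal{G}^q)\to C^0(\mathcal{U},\mathcal{G}^q)\xrightarrow{\delta}C^1(\mathcal{U},\mathcal{G}^q)\xrightarrow{\delta}\cdots$ is exact; granted this, a standard double-complex argument (filter $K^{\bullet,\bullet}$ by Hom-degree, or invoke the acyclic assembly lemma) shows that the inclusion $\Gamma(X,\mathcal{G}^\bullet)\hookrightarrow\mathrm{Tot}\,K^{\bullet,\bullet}$, which is $\mathcal{T}$, is a quasi-isomorphism. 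Exactness at the $\Gamma$ and $C^0$ spots is the sheaf axiom. For $p\ge 1$ I would use softness of $\mathcal{A}$: by Appendix~\ref{appendix: some generalities of soft sheaves} there is a partition of unity $\{\rho_j\}$ by global sections of $\mathcal{A}$ subordinate to the locally finite cover $\{U_i\}$, and every sheaf of $\mathcal{A}$-modules --- in particular $\mathcal{G}^q$ --- is fine. Define $h\colon C^p(\mathcal{U},\mathcal{G}^q)\to C^{p-1}(\mathcal{U},\mathcal{G}^q)$ by $(hc)_{i_0\ldots i_{p-1}}=\sum_j\rho_j\,c_{ji_0\ldots i_{p-1}}$, where $\rho_j c_{ji_0\ldots i_{p-1}}$, a section over $U_j\cap U_{i_0\ldots i_{p-1}}$ vanishing off $\mathrm{supp}\,\rho_j$, is extended by zero to a section over $U_{i_0\ldots i_{p-1}}$ --- this is precisely where fineness of $\mathcal{G}^q$ enters. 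The standard computation then gives $\delta h+h\delta=\mathrm{id}$ on $C^p$ for $p\ge1$.

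The proof is thus mostly bookkeeping; its two computational ingredients are the sign check reorganizing \eqref{equation: differential on morphisms of twisted complexes} into the total \v{C}ech--Hom differential and the classical identity $\delta h+h\delta=\mathrm{id}$. The step most deserving of care is the first: one must verify that, with $a^{1,0}=b^{1,0}=\mathrm{id}$, the extra terms of the twisted differential really do supply the missing \v{C}ech faces with the correct signs, so that in this very special case the Hom-complex of twisted complexes is nothing but the total \v{C}ech--Hom double complex. I would also remark that of the hypotheses only softness of $\mathcal{A}$ is used here --- it is what makes $C^\bullet(\mathcal{U},\mathcal{G}^q)$ a resolution of $\Gamma(X,\mathcal{G}^q)$ --- while the finiteness and goodness of the cover, though assumed in the statement, are not needed for this proposition.
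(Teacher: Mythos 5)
Your proof is correct, and its two ingredients are exactly the ones the paper uses; the difference is one of packaging. The paper first proves the same reorganization lemma you describe (showing that with $a^{1,0}=b^{1,0}=\mathrm{id}$ the twisted differential is $\hat\delta\pm\hat d$, the full \v{C}ech differential plus the internal Hom differential), and then verifies surjectivity and injectivity on cohomology by hand: given a closed $\phi$ it sets $\widetilde\phi=\sum_i\rho_i\phi^{0,n}_i$ and $\varphi^{k,n-1-k}_{i_0\ldots i_k}=\sum_j\rho_j\phi^{k+1,n-1-k}_{ji_0\ldots i_k}$ and checks $\phi-\mathcal{T}(\widetilde\phi)=d\varphi$ degree by degree, and similarly for the injectivity half. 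Those formulas are precisely your contraction $h$ (and its effect on the diagonal filtration of the double complex), so the underlying mechanism is identical. What you do differently is recognize the target complex as $\mathrm{Tot}\,C^\bullet(\mathcal{U},\mathcal{G}^\bullet)$ for the fine sheaf $\mathcal{G}^q=\bigoplus_n\mathcal{H}om(\mathcal{E}^n,\mathcal{F}^{n+q})$ and then invoke the standard facts --- \v{C}ech acyclicity of a fine sheaf via the partition-of-unity homotopy, plus the acyclic-assembly/double-complex lemma --- rather than unwinding them. This buys conceptual clarity: it makes transparent that the proposition is an instance of the classical \v{C}ech-to-global comparison, and it isolates, as you correctly note, that only softness of $\mathcal{A}$ (hence fineness of $\mathcal{G}^q$) and local finiteness of the cover are genuinely needed here, with goodness of the cover playing no role in this half of Theorem~\ref{thm: quasi-equivalence of bundles and twisted complexes}.
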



The proof of Proposition \ref{prop: descent of morphisms to twisted complex}

First we need to get a more explicit description of the differentials in the complex Tw$(X)(\mathcal{T}(E^{\bullet}),\mathcal{T}(F^{\bullet}))$.

\begin{lemma}
Let $\phi^{p,q}$ be an element in $\text{Tw}^{p,q}(X)(\mathcal{T}(E^{\bullet}),\mathcal{T}(F^{\bullet}))$.
We can write $d\phi^{p,q}$ as
\begin{equation}\label{equation: decompose of d on morphisms}
d\phi^{p,q}=\hat{\delta}\phi^{p,q}+(-1)^p \hat{d}\phi^{p,q}
\end{equation}
where $\hat{\delta}\phi^{p,q}$ is of degree $(p+1,q)$ and is given by
$$
(\hat{\delta}\phi^{p,q})_{i_0i_1\ldots i_{p+1}}=\sum_{k=0}^{p+1}(-1)^k \phi^{p,q}_{i_0\ldots \widehat{i_k} \ldots i_{p+1}}|_{U_{i_0i_1\ldots i_{p+1}}}
$$
and $\hat{d}\phi^{p,q}$ is of degree $(p,q+1)$ and is given by
$$
\hat{d}\phi^{p,q}=d_F \circ\phi^{p,q}-(-1)^q \phi^{p,q}\circ d_E.
$$
\end{lemma}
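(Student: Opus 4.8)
The plan is to expand the definition of the differential on morphism complexes, Equation~(\ref{equation: differential on morphisms of twisted complexes}),
$$
d\phi=\delta\phi+b\cdot\phi-(-1)^{|\phi|}\phi\cdot a,
$$
for the particular twisted complexes $\mathcal{E}=\mathcal{T}(E^{\bullet})$ and $\mathcal{F}=\mathcal{T}(F^{\bullet})$, whose defining data (Definition~\ref{defi: twisted functor}) are $a^{0,1}=d_E$, $a^{1,0}=\mathrm{id}$, $a^{k,1-k}=0$ for $k\geq2$, and likewise $b^{0,1}=d_F$, $b^{1,0}=\mathrm{id}$, $b^{k,1-k}=0$ for $k\geq2$. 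Since for $\phi^{p,q}$ the total degree is $|\phi^{p,q}|=p+q$, the products $b\cdot\phi^{p,q}$ and $\phi^{p,q}\cdot a$ each reduce to two summands, one coming from the $(1,0)$-component and one from the $(0,1)$-component; all other contributions vanish.

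Then I would evaluate these four products with the composition formula~(\ref{equation: composition of maps between graded sheaves}), tracking the sign $(-1)^{qr}$ it carries. One finds
$$
(b^{1,0}\cdot\phi^{p,q})_{i_0\ldots i_{p+1}}=\phi^{p,q}_{i_1\ldots i_{p+1}}\big|_{U_{i_0\ldots i_{p+1}}},\qquad
(b^{0,1}\cdot\phi^{p,q})_{i_0\ldots i_p}=(-1)^p\,d_F\circ\phi^{p,q}_{i_0\ldots i_p},
$$
and
$$
(\phi^{p,q}\cdot a^{1,0})_{i_0\ldots i_{p+1}}=(-1)^q\,\phi^{p,q}_{i_0\ldots i_p}\big|_{U_{i_0\ldots i_{p+1}}},\qquad
(\phi^{p,q}\cdot a^{0,1})_{i_0\ldots i_p}=\phi^{p,q}_{i_0\ldots i_p}\circ d_E.
$$
Next I would sort the five summands of $d\phi^{p,q}$ (namely $\delta\phi^{p,q}$ and the four products, each with its prescribed outer sign) by bidegree: $\delta\phi^{p,q}$, $b^{1,0}\cdot\phi^{p,q}$ and $-(-1)^{p+q}\phi^{p,q}\cdot a^{1,0}$ all have bidegree $(p+1,q)$, while $b^{0,1}\cdot\phi^{p,q}$ and $-(-1)^{p+q}\phi^{p,q}\cdot a^{0,1}$ have bidegree $(p,q+1)$, which already matches the shape of the claimed splitting $d\phi^{p,q}=\hat\delta\phi^{p,q}+(-1)^p\hat d\phi^{p,q}$.

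For the $(p+1,q)$-part I would note that $\delta\phi^{p,q}$, by its definition~(\ref{equation: delta on maps}), is the alternating sum over the interior indices $k=1,\dots,p$ only; the term $b^{1,0}\cdot\phi^{p,q}$ restores the $k=0$ summand (with the correct sign $(-1)^0=+1$), and $-(-1)^{p+q}\phi^{p,q}\cdot a^{1,0}=(-1)^{p+1}\phi^{p,q}_{i_0\ldots i_p}$ restores the $k=p+1$ summand with sign $(-1)^{p+1}$, using $(-1)^{p+q}(-1)^q=(-1)^p$. Hence the $(p+1,q)$-part equals $\sum_{k=0}^{p+1}(-1)^k\phi^{p,q}_{i_0\ldots\widehat{i_k}\ldots i_{p+1}}=(\hat\delta\phi^{p,q})_{i_0\ldots i_{p+1}}$. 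For the $(p,q+1)$-part I would factor out $(-1)^p$:
$$
b^{0,1}\cdot\phi^{p,q}-(-1)^{p+q}\phi^{p,q}\cdot a^{0,1}=(-1)^p d_F\circ\phi^{p,q}-(-1)^{p+q}\phi^{p,q}\circ d_E=(-1)^p\bigl(d_F\circ\phi^{p,q}-(-1)^q\phi^{p,q}\circ d_E\bigr)=(-1)^p\hat d\phi^{p,q}.
$$
Adding the two parts gives the lemma.

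The only delicate point is the sign bookkeeping: one must reconcile the internal sign $(-1)^{qr}$ from the composition formula with the external sign $-(-1)^{|\phi|}$ in the definition of $d$, and check that the two boundary contributions coming from the identity maps $a^{1,0}$ and $b^{1,0}$ attach to the truncated \v{C}ech alternating sum $\delta\phi^{p,q}$ with precisely the right signs to complete it to the full alternating sum appearing in $\hat\delta$. The bidegree separation itself is automatic, since $a^{1,0}$ and $b^{1,0}$ raise the \v{C}ech degree while $a^{0,1}$ and $b^{0,1}$ raise the internal degree. There is no conceptual obstacle here — the lemma is just the unwinding of Definition~\ref{defi: twisted complex} for the specific twisted complexes $\mathcal{T}(E^{\bullet})$ and $\mathcal{T}(F^{\bullet})$.
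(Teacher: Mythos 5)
Your proof is correct and takes essentially the same route as the paper's: expand the formula $d\phi = \delta\phi + b\cdot\phi - (-1)^{|\phi|}\phi\cdot a$ using the special form of $a$ and $b$ coming from $\mathcal{T}$, obtain the five-term expression $\delta\phi^{p,q} + b^{0,1}\cdot\phi^{p,q} + b^{1,0}\cdot\phi^{p,q} - (-1)^{p+q}\phi^{p,q}\cdot a^{0,1} - (-1)^{p+q}\phi^{p,q}\cdot a^{1,0}$, and regroup by bidegree. The paper stops at that five-term expression with ``and we get the result we want,'' whereas you carry out the sign bookkeeping explicitly — in particular the key observation that the truncated \v{C}ech differential $\delta$ of Equation~(\ref{equation: delta on maps}) runs only over interior indices $k=1,\dots,p$, and that the $b^{1,0}$ and $a^{1,0}$ terms supply the missing $k=0$ and $k=p+1$ boundary summands with the signs $+1$ and $(-1)^{p+1}$ needed to complete the full alternating sum $\hat\delta$. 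Your sign computations all check out against the composition formula~(\ref{equation: composition of maps between graded sheaves}); this is a faithful, more detailed version of the paper's argument.
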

\begin{proof}
We know that in general the differential is given in Equation (\ref{equation: differential on morphisms of twisted complexes}) by
$$
d \phi=\delta \phi+b\cdot \phi-(-1)^{|\phi|}\phi\cdot a.
$$

Now by the very definition of $\mathcal{T}(E^{\bullet})$ and $\mathcal{T}(F^{\bullet})$ we know that
$$
a^{0,1}_i=d_E|_{U_i},~a^{1,0}_{ij}=id_E|_{U_{ij}}, ~a^{k,1-k}=0, ~ \forall k\geq 2
$$
and
$$
b^{0,1}_i=d_F|_{U_i},~b^{1,0}_{ij}=id_F|_{U_{ij}}, ~b^{k,1-k}=0, ~ \forall k\geq 2
$$
therefore
$$
d\phi^{p,q}=\delta \phi^{p,q}+b^{0,1}\cdot \phi^{p,q}+b^{1,0}\cdot \phi^{p,q}-(-1)^{p+q}\phi^{p,q}\cdot a^{0,1}-(-1)^{p+q}\phi^{p,q}\cdot a^{1,0}
$$
and we get the result we want.
\end{proof}

Then we can proceed to the proof. First we have the following proposition

\begin{prop}\label{prop: surjective of morphisms}
The map $\mathcal{T}:~\mathfrak{L}(X,\mathcal{A})(E^{\bullet}, F^{\bullet})\rightarrow \text{Tw}_{\text{perf}}(X)(\mathcal{T}(E^{\bullet}),\mathcal{T}(F^{\bullet}))$ is surjective on the level of cohomology. In other words, for any closed element $\phi$ in Tw$(X)(\mathcal{T}(E^{\bullet}),\mathcal{T}(F^{\bullet}))$, there exists a closed element $\widetilde{\phi}$ in $\mathfrak{L}(X,\mathcal{A})(E^{\bullet},F^{\bullet})$ and an element $\varphi$ in Tw$(X)(\mathcal{T}(E^{\bullet}),\mathcal{T}(F^{\bullet}))$ such that
$$
\phi-\mathcal{T}(\widetilde{\phi})=d \varphi.
$$
\end{prop}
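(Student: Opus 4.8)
The plan is to recognize $\text{Tw}_{\text{perf}}(X)(\mathcal{T}(E^{\bullet}),\mathcal{T}(F^{\bullet}))$ as the total complex of the \v{C}ech double complex $C^{p}(\mathcal{U},\text{Hom}^{q}(E^{\bullet},F^{\bullet}))$ with horizontal differential $\hat{\delta}$ and vertical differential $\hat{d}$ supplied by the lemma above (the $b^{1,0}$ and $a^{1,0}$ terms of $d$ having been absorbed into $\hat{\delta}$), and then to \emph{untwist} a given closed element $\phi=\sum_{k\geq 0}\phi^{k,n-k}$ step by step, removing at each stage its component of maximal \v{C}ech degree by subtracting an explicit exact term $d\varphi$. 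The only input we need from the hypotheses is that, for every $q$, the sheaf $\text{Hom}^{q}(E^{\bullet},F^{\bullet})=\bigoplus_{j}\text{Hom}_{\mathcal{A}}(E^{j},F^{j+q})$ is a finitely generated locally free $\mathcal{A}$-module, hence fine by Proposition~\ref{prop: sheaf of modules of soft rings is fine} and therefore soft, so that $\check{H}^{p}(\mathcal{U},\text{Hom}^{q}(E^{\bullet},F^{\bullet}))=0$ for all $p\geq 1$ because $X$ is paracompact and $\mathcal{U}$ is finite.

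Decomposing the equation $d\phi=0$ by bidegree and using $d\phi^{p,q}=\hat{\delta}\phi^{p,q}+(-1)^{p}\hat{d}\,\phi^{p,q}$, the bidegree $(l,n+1-l)$ component reads $\hat{\delta}\phi^{l-1,n-l+1}+(-1)^{l}\hat{d}\,\phi^{l,n-l}=0$. Let $m$ be the largest integer with $\phi^{m,n-m}\neq 0$; if $m=0$ there is nothing to untwist, so assume $m\geq 1$. Taking $l=m+1$ and using $\phi^{m+1,n-m-1}=0$ gives $\hat{\delta}\phi^{m,n-m}=0$, so $\phi^{m,n-m}$ is a $\hat{\delta}$-cocycle of \v{C}ech degree $m\geq 1$; by the vanishing above there is a \v{C}ech $(m-1)$-cochain $\varphi^{m-1,n-m}$ with $\hat{\delta}\varphi^{m-1,n-m}=\phi^{m,n-m}$. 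Since $d\varphi^{m-1,n-m}=\hat{\delta}\varphi^{m-1,n-m}+(-1)^{m-1}\hat{d}\,\varphi^{m-1,n-m}$ and the second summand has \v{C}ech degree $m-1$, the element $\phi-d\varphi^{m-1,n-m}$ is still closed, has vanishing component in every \v{C}ech degree $\geq m$, and differs from $\phi$ by an exact term. Iterating this --- the complexes $E^{\bullet},F^{\bullet}$ being bounded and $\mathcal{U}$ finite, so only finitely many steps occur --- reduces $\phi$, modulo an exact element $d\varphi\in\text{Tw}_{\text{perf}}(X)(\mathcal{T}(E^{\bullet}),\mathcal{T}(F^{\bullet}))$, to a closed element $\phi''$ concentrated in \v{C}ech degree $0$.

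Finally I would identify $\phi''$ with a global morphism. Being a $0$-cochain with $\hat{\delta}\phi''=0$, its local pieces $\phi''_{i}\colon E^{\bullet}|_{U_{i}}\to F^{\bullet+n}|_{U_{i}}$ agree on overlaps and glue to a morphism of sheaves $\widehat{\phi}\colon E^{\bullet}\to F^{\bullet+n}$ on $X$, while $\hat{d}\,\phi''=0$ says precisely that $\widehat{\phi}$ commutes with the differentials up to the sign $(-1)^{n}$. Applying the global-sections equivalence of Theorem~\ref{thm: local perfect is global perfect}, part~(1), turns $\widehat{\phi}$ into a degree-$n$ morphism $\widetilde{\phi}\in\mathfrak{L}(X,\mathcal{A})(E^{\bullet},F^{\bullet})$, which is closed exactly because $\hat{d}\,\phi''=0$, and by the definition of $\mathcal{T}$ on morphisms one has $\mathcal{T}(\widetilde{\phi})=\phi''$ on the nose. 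Hence $\phi-\mathcal{T}(\widetilde{\phi})=\phi-\phi''=d\varphi$, which is the assertion.

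There is no real conceptual difficulty here; the point requiring care is the bookkeeping of bidegrees and of the signs $(-1)^{p}$ in $d\phi^{p,q}=\hat{\delta}\phi^{p,q}+(-1)^{p}\hat{d}\,\phi^{p,q}$ --- one must verify that at each stage the current top component is genuinely $\hat{\delta}$-closed (so a \v{C}ech primitive exists) and that subtracting $d\varphi^{m-1,n-m}$ does not reintroduce components in \v{C}ech degrees $\geq m$. The sole substantive use of the hypotheses is the vanishing of the higher \v{C}ech cohomology of $\text{Hom}^{q}(E^{\bullet},F^{\bullet})$, which is where the softness of $\mathcal{A}$, together with the finiteness of the cover, enters.
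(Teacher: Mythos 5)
Your proof is correct, but it is organized quite differently from the paper's. The paper proceeds in one shot: it writes down the explicit Čech contracting homotopy coming from the partition of unity, namely $\widetilde{\phi}=\sum_i\rho_i\phi^{0,n}_i$ and $\varphi^{k,n-1-k}_{i_0\ldots i_k}=\sum_j\rho_j\phi^{k+1,n-1-k}_{j i_0\ldots i_k}$, and then verifies the identity $\phi-\mathcal{T}(\widetilde{\phi})=d\varphi$ bidegree by bidegree using the closedness of $\phi$. You instead cite the abstract fact that $\check H^p(\mathcal{U},\text{Hom}^q(E^{\bullet},F^{\bullet}))=0$ for $p\geq 1$ (because $\text{Hom}^q(E^{\bullet},F^{\bullet})$ is a module over the soft sheaf $\mathcal{A}$, hence fine) and run a descending induction on the top Čech degree of $\phi$, killing the top component by an exact term at each step. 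The two arguments are morally the same --- both rest on the fineness of $\text{Hom}^q(E^{\bullet},F^{\bullet})$, and the paper's explicit $\varphi$ is precisely what the standard partition-of-unity contraction produces --- but your version is more structural and buys clarity: it isolates the one cohomological input, needs no hands-on verification of the big cancellation, and makes transparent why only finitely many steps occur (boundedness of $E^{\bullet},F^{\bullet}$ plus finiteness of the cover). The paper's version is more self-contained, since it does not presuppose the vanishing of Čech cohomology for fine sheaves but in effect re-proves it on the spot. Your bookkeeping is right: the bidegree $(m+1,n-m)$ component of $d\phi=0$ indeed forces $\hat\delta\phi^{m,n-m}=0$, and subtracting $d\varphi^{m-1,n-m}$ kills the $(m,n-m)$ component without touching anything in Čech degree $>m$, since $\hat d\varphi^{m-1,n-m}$ sits in degree $m-1$. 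The final identification of $\phi''$ concentrated in Čech degree $0$ with $\mathcal{T}(\widetilde\phi)$ is also correct; invoking the full global-sections equivalence of Theorem \ref{thm: local perfect is global perfect}(1) is a bit more than needed (one simply glues the $\phi''_i$, which agree on overlaps by $\hat\delta\phi''=0$, and takes global sections), but it is not wrong.
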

\begin{proof}
Let $n$ be the total degree of $\phi$. By Equation (\ref{equation: decompose of d on morphisms}) we have
$$
d\phi^{k,n-k}=\hat{\delta}\phi^{k,n-k}+(-1)^k \hat{d}\phi^{k,n-k},~\forall ~k\geq 0.
$$

 $\phi$ is closed implies that
\begin{equation}\label{equation: closed condition on morphism}
\begin{split}
\hat{\delta}\phi^{k,n-k}+(-1)^{k+1}\hat{d}\phi^{k+1,n-k-1}=&0,~\forall ~k\geq 0\\
\hat{d}\phi^{0,n}=&0.
\end{split}
\end{equation}

We know that $\phi^{0,n}$ consists of pieces $\phi^{0,n}_i$ on $U_i$. Using the partition of unity $\rho_i$ we define $\widetilde{\phi}$ as
\begin{equation}
\widetilde{\phi}:=\sum_i \rho_i \phi^{0,n}_i.
\end{equation}
It is obvious that $\widetilde{\phi}$ is a globally defined degree $n$ map from $E^{\bullet}$ to  $F^{\bullet}$. Moreover $\widetilde{\phi}$ is closed since we have $\hat{d}\phi^{0,n}=0$ hence
$$
d_F\circ \widetilde{\phi}-(-1)^n \widetilde{\phi}\circ d_E=\sum_i \rho_i \hat{d}\phi^{0,n}_i=0.
$$.

We still need to show that  $\mathcal{T}(\widetilde{\phi})$ and $\phi$ are cohomologous. For this we define $\varphi$ with total degree $n-1$ by
\begin{equation}\label{equation: definition of varphi in morphisms}
\varphi^{k,n-1-k}_{i_0\ldots i_k}:=\sum_j \rho_j \phi^{k+1,n-1-k}_{ji_0\ldots i_k }.
\end{equation}

Let us check that $\phi-\mathcal{T}(\widetilde{\phi})=d \varphi$ in each degree. First in degree $(0,n)$
\begin{equation*}
\begin{split}
[\phi-\mathcal{T}(\widetilde{\phi})]^{0,n}_i=& \phi^{0,n}_i-\sum_j\rho_j \phi^{0,n}_j|_{U_i}\\
=&\sum_j \rho_j \phi^{0,n}_i-\sum_j \rho_j \phi^{0,n}_j|_{U_i}\\
=&\sum_j \rho_j (\hat{\delta}\phi^{0,n})_{ji}.
\end{split}
\end{equation*}
Equation (\ref{equation: closed condition on morphism}) tells us that $(\hat{\delta}\phi^{0,n})_{ji}=\hat{d}\phi^{1,n-1}_{ji}$ therefore
\begin{equation*}
\begin{split}
[\phi-\mathcal{T}(\widetilde{\phi})]^{0,n}_i=&\sum_j \rho_j \hat{d}\phi^{1,n-1}_{ji}\\
=&\hat{d}\sum_j \rho_j \phi^{1,n-1}_{ji}\\
=&\hat{d}\varphi^{0,n-1}_i.
\end{split}
\end{equation*}

For degree $(k,n-k)$, $k \geq 1$, since $\mathcal{T}(\widetilde{\phi})^{k,n-k}=0$ it is clear that
$$
[\phi-\mathcal{T}(\widetilde{\phi})]^{k,n-k}_{i_0\ldots i_k}=\phi^{k,n-k}_{i_0\ldots i_k}.
$$
On the other hand  Equation (\ref{equation: decompose of d on morphisms}) tells us
$$
(d\varphi)^{k,n-k}_{i_0\ldots i_k}=(\hat{\delta}\varphi^{k-1,n-k})_{i_0\ldots i_k}+(-1)^k (\hat{d}\varphi^{k,n-k-1})_{i_0\ldots i_k}
$$
where
\begin{equation*}
\begin{split}
(\hat{\delta}\varphi^{k-1,n-k})_{i_0\ldots i_k}=&\sum_{s=0}^k(-1)^s \varphi^{k-1,n-k}_{i_0\ldots \widehat{i_s}\ldots i_k}\\
=&\sum_{s=0}^k(-1)^s \sum_j \rho_j \phi^{k,n-k}_{ji_0\ldots \widehat{i_s}\ldots i_k}\\
=& \sum_j \rho_j \sum_{s=0}^k(-1)^s \phi^{k,n-k}_{ji_0\ldots \widehat{i_s}\ldots i_k}\\
=& \sum_j \rho_j [-(\hat{\delta}\phi^{k,n-k})_{ji_0\ldots i_k }+\phi^{k,n-k}_{i_0\ldots i_k}]\\
=& -\sum_j \rho_j (\hat{\delta}\phi^{k,n-k})_{j i_0\ldots i_k }+ \sum_j \rho_j \phi^{k,n-k}_{i_0\ldots i_k}\\
=& -\sum_j \rho_j (\hat{\delta}\phi^{k,n-k})_{j i_0\ldots i_k }+ \phi^{k,n-k}_{i_0\ldots i_k}
\end{split}
\end{equation*}
and
$$
\hat{d}\varphi^{k,n-k-1}_{i_0\ldots i_k}=\sum_j\rho_j \hat{d}\phi^{k+1,n-k-1}_{ji_0\ldots i_k}.
$$
Recall that $\phi$ is closed hence $\hat{\delta}\phi^{k,n-k}+(-1)^k\hat{d}\phi^{k+1,n-k-1}=0$, therefore we get cancellations and
$$
(d\varphi)^{k,n-k}_{i_0\ldots i_k}=  \phi^{k,n-k}_{i_0\ldots i_k}
$$
hence we get
$$
[\phi-\mathcal{T}(\widetilde{\phi})]^{k,n-k}_{i_0\ldots i_k}=(d\varphi)^{k,n-k}_{i_0\ldots i_k}.
$$
\end{proof}

Next we have the following proposition, which gives the injectivity of $\mathcal{T}$.

\begin{prop}\label{prop: injective of morphisms}
If $\mathcal{T}(\phi)$ is a coboundary in Tw$(X)(\mathcal{T}(E^{\bullet}),\mathcal{T}(F^{\bullet}))$, then $\phi$ is a coboundary in $\mathfrak{L}(X,\mathcal{A})(E^{\bullet},F^{\bullet})$.
\end{prop}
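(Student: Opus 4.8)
The plan is to descend the homotopy directly, parallel to the proof of Proposition~\ref{prop: surjective of morphisms} but using only its \v{C}ech--degree--zero part: the point is that the $\check{\mathrm{C}}$ech-degree-zero component of any primitive of $\mathcal{T}(\phi)$ is already forced to be a global primitive of $\phi$.

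First I would fix notation. Let $n=|\phi|$ and, using the hypothesis, choose an element $\psi=\sum_{k\geq 0}\psi^{k,n-1-k}$ of $\text{Tw}^{n-1}(X)(\mathcal{T}(E^{\bullet}),\mathcal{T}(F^{\bullet}))$ with $\mathcal{T}(\phi)=d\psi$. The key observation is a bidegree count based on Equation~(\ref{equation: decompose of d on morphisms}): the term $\hat{\delta}\psi^{k,n-1-k}$ has bidegree $(k+1,n-1-k)$ and $\hat{d}\psi^{k,n-1-k}$ has bidegree $(k,n-k)$, so the only contribution to the bidegree-$(0,n)$ part of $d\psi$ comes from $\hat{d}\psi^{0,n-1}$. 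Since $\mathcal{T}(\phi)$ is concentrated in bidegree $(0,n)$ with $\mathcal{T}(\phi)^{0,n}_i=\phi|_{U_i}$, comparing $(0,n)$-components of $\mathcal{T}(\phi)=d\psi$ yields, on each $U_i$,
\begin{equation*}
\phi|_{U_i}=\hat{d}\psi^{0,n-1}_i=d_F\circ\psi^{0,n-1}_i-(-1)^{n-1}\psi^{0,n-1}_i\circ d_E .
\end{equation*}

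Next I would glue these local primitives. Let $\{\rho_i\}$ be a partition of unity subordinate to $\{U_i\}$ (available by Lemma~\ref{lemma: partition of unity}) and set
\begin{equation*}
\widetilde{\psi}:=\sum_i\rho_i\,\psi^{0,n-1}_i ,
\end{equation*}
each summand extended by zero from $U_i$ to $X$; since $\operatorname{supp}\rho_i\subset U_i$ this is a well-defined degree-$(n-1)$ element of $\mathfrak{L}(X,\mathcal{A})(E^{\bullet},F^{\bullet})$. As each $\rho_i$ lies in $\mathcal{A}(X)$ and $d_E,d_F$ are $\mathcal{A}(X)$-linear, multiplication by $\rho_i$ commutes with $d_E$ and $d_F$, so computing the Hom-complex differential of $\widetilde{\psi}$ term by term and using the displayed identity gives
\begin{equation*}
d_F\circ\widetilde{\psi}-(-1)^{n-1}\widetilde{\psi}\circ d_E=\sum_i\rho_i\bigl(d_F\circ\psi^{0,n-1}_i-(-1)^{n-1}\psi^{0,n-1}_i\circ d_E\bigr)=\sum_i\rho_i\,\phi|_{U_i}=\Bigl(\sum_i\rho_i\Bigr)\phi=\phi .
\end{equation*}
Thus $\phi=d_F\circ\widetilde{\psi}-(-1)^{n-1}\widetilde{\psi}\circ d_E$ is a coboundary in $\mathfrak{L}(X,\mathcal{A})(E^{\bullet},F^{\bullet})$, as claimed. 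Combined with Proposition~\ref{prop: surjective of morphisms}, this shows that $\mathcal{T}$ in Equation~(\ref{equation: restriction of morphisms}) is a quasi-isomorphism, completing the proof of Proposition~\ref{prop: descent of morphisms to twisted complex}.

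I do not anticipate a serious obstacle: the argument is short and, like Proposition~\ref{prop: surjective of morphisms}, uses neither softness of $\mathcal{A}$ nor goodness of the cover, only paracompactness. The only places requiring a moment's care are the bidegree bookkeeping that singles out $\psi^{0,n-1}$ as the unique source of the $(0,n)$-component of $d\psi$, and the verification that the extension by zero of $\rho_i\psi^{0,n-1}_i$ interacts correctly with pre- and post-composition by $d_E$ and $d_F$ — immediate since these maps are $\mathcal{A}$-linear and $\rho_i$ vanishes off $U_i$.
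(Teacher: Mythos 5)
Your proof is correct and coincides with the paper's own construction: the paper likewise sets $\widetilde{\varphi}=\sum_j\rho_j\varphi^{0,n-1}_j$ from the \v{C}ech-degree-zero part of the chosen primitive and asserts $\phi=d\widetilde{\varphi}$ by a computation ``similar to'' the surjectivity proof, which you spell out cleanly. The bidegree bookkeeping singling out $\hat{d}\psi^{0,n-1}$ as the only contributor to the $(0,n)$-component of $d\psi$, together with the observation that multiplication by $\rho_i\in\mathcal{A}(X)$ commutes with the $\mathcal{A}$-linear differentials $d_E$ and $d_F$, is exactly the detail the paper leaves implicit, and your extension-by-zero remark is the right justification that $\widetilde{\psi}$ is globally defined. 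One small correction to your closing comment: the partition of unity invoked from Lemma~\ref{lemma: partition of unity} does require the structure sheaf $\mathcal{A}$ to be soft (paracompactness alone does not yield a partition of unity by sections of an arbitrary sheaf of rings), so softness is in fact used here, just as it is in Proposition~\ref{prop: surjective of morphisms}; goodness of the cover is indeed not needed.
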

\begin{proof}
Let the total degree of $\phi$ be $n$. Let $\varphi$ be an element with total degree $n-1$ in  Tw$(X)(\mathcal{T}(E^{\bullet}),\mathcal{T}(F^{\bullet}))$ such that
$$
\mathcal{T}(\phi)=d\varphi.
$$

Define $\widetilde{\varphi}$ in $\mathfrak{L}(X,\mathcal{A})(E^{\bullet},F^{\bullet})$ as
$$
\widetilde{\varphi}=\sum_j\rho_j \varphi^{0,n-1}_j
$$

Similar to the proof of Proposition  \ref{prop: descent of morphisms to twisted complex}, we could check that
$$
\phi=d \widetilde{\varphi}.
$$
\end{proof}

Proposition \ref{prop: surjective of morphisms} and Proposition \ref{prop: injective of morphisms} together give Proposition \ref{prop: descent of morphisms to twisted complex}.

\section{Twisted perfect complex and homotopy limit}\label{section: twisted complex and homotopy limit}
In this section we consider $\mathfrak{L}$ as a presheaf of dg-categories: for any open subset $U\subset X$, $\mathfrak{L}(U)$ is the dg-category of bounded complexes of locally free sheaves on $U$.  For an open cover $\mathcal{U}=\{U_i\}$ of $X$, its \emph{\v{C}ech nerve} is a simplicial space
$$
\begin{tikzcd}
\cdots  \arrow[yshift=1.2ex]{r}\arrow[yshift=0.4ex]{r}\arrow[yshift=-0.4ex]{r}\arrow[yshift=-1.2ex]{r}& \coprod U_i\cap U_j\cap U_k \arrow[yshift=1ex]{r}\arrow{r}\arrow[yshift=-1ex]{r}& \coprod U_i\cap U_j  \arrow[yshift=0.7ex]{r}\arrow[yshift=-0.7ex]{r}  &  \coprod U_i.
\end{tikzcd}
$$
We consider the locally free sheaves on the spaces and get the following cosimplicial diagram of dg-categories
\begin{equation}\label{equation: cosimplicial diagram of L of an open cover}
\begin{tikzcd}
\prod \mathfrak{L}(U_i) \arrow[yshift=0.7ex]{r}\arrow[yshift=-0.7ex]{r}& \prod \mathfrak{L}(U_i\cap U_j) \arrow[yshift=1ex]{r}\arrow{r}\arrow[yshift=-1ex]{r}  &  \prod \mathfrak{L}(U_i\cap U_j\cap U_k)  \arrow[yshift=1.2ex]{r}\arrow[yshift=0.4ex]{r}\arrow[yshift=-0.4ex]{r}\arrow[yshift=-1.2ex]{r}&\cdots
\end{tikzcd}
\end{equation}

In general descent theory, the descent data of $\mathfrak{L}(U_i)$ is given by the \emph{homotopy limit} of the cosimplicial diagram (\ref{equation: cosimplicial diagram of L of an open cover}).

In our case we have an explicit construction of the homotopy limit.
\begin{thm}\label{thm: twisted complex is homotopy limit}
If the open cover $\{U_i\}$ of $X$ is good, then the dg-category of twisted complexes on $X$, $\text{Tw}_{\text{perf}}(X)$ is quasi-equivalent to the homotopy limit of  $\mathfrak{L}(U_I)$.
\end{thm}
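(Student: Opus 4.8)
The plan is to realize both $\text{Tw}_{\text{perf}}(X)$ and $\text{holim}_I \mathfrak{L}(U_I)$ as point-set models of the same homotopy type, via the Thom--Whitney (cobar-type) description of homotopy limits of cosimplicial dg-categories. First I would fix an explicit model for the homotopy limit of the cosimplicial diagram (\ref{equation: cosimplicial diagram of L of an open cover}): an object of $\text{holim}_I\mathfrak{L}(U_I)$ is a family $\{E_i\}$ of bounded complexes of locally free $\mathcal{A}$-modules, one on each $U_i$, together with a Maurer--Cartan element $\alpha$ in the totalization dg-algebra $\prod_n \Omega^\bullet_{\mathrm{poly}}(\Delta^n)\otimes C^n(\mathcal{U},\mathrm{Hom}^\bullet(E,E))$ (polynomial differential forms on the simplices, tensored with the \v{C}ech cochains of endomorphisms), the morphism complex between two objects being the corresponding $\alpha$-twisted Hom-complex. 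That this totalization genuinely computes the $\infty$-categorical limit is the formal homotopy-theoretic input I would cite (cf.\ \cite{block2017explicit}) rather than reprove.

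Second, I would run the Dupont simplicial contraction --- the classical explicit homotopy equivalence between $\Omega^\bullet_{\mathrm{poly}}(\Delta^\bullet)$ and the normalized simplicial cochain complex --- and transfer the dg-algebra structure along it using the homotopy transfer theorem. The transferred $A_\infty$-structure lives on $\bigoplus_{p,q} C^p(\mathcal{U},\mathrm{Hom}^q(E,E))$: its differential is the \v{C}ech differential $\delta$ of (\ref{equation: delta on maps}) and its binary product is exactly the sign-twisted composition (\ref{equation: composition of maps between graded sheaves}), the potential higher operations vanishing because Dupont's map is multiplicative in the range that matters. Under this transfer a Maurer--Cartan element becomes precisely a collection $a=\sum_{k\ge 0}a^{k,1-k}$ with $\delta a + a\cdot a = 0$, i.e.\ a twisted perfect complex in the sense of Definition \ref{defi: twisted complex} --- $a^{0,1}_i$ the differential on $E_i$, $a^{1,0}_{ij}$ the first-order gluing datum, the $a^{k,1-k}$ with $k\ge 2$ the higher coherences --- and the twisted Hom-complex reduces to $C^\bullet(\mathcal{U},\mathrm{Hom}^\bullet(E,F))$ with the differential (\ref{equation: differential on morphisms of twisted complexes}). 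Since homotopy transfer along a contraction yields an $A_\infty$-quasi-isomorphism, the associated dg-categories of Maurer--Cartan elements are quasi-equivalent, giving $\text{Tw}_{\text{perf}}(X)\simeq \text{holim}_I\mathfrak{L}(U_I)$.

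Goodness of the cover enters to ensure that this finite combinatorial model is homotopically correct and, in particular, that the morphism complexes have the right cohomology: since all finite intersections $U_{i_0\dots i_p}$ are contractible, combining this with softness of $\mathcal{A}$ gives $H^k(U_{i_0\dots i_p},\mathcal{F})=0$ for $k\ge 1$, so the \v{C}ech totalization of the Hom-complexes computes the correct derived mapping complex --- this is the same input as in Proposition \ref{prop: homotopy invertible morphisms}. Quasi-essential surjectivity is then not an extra obstacle, since in the chosen models both sides have the same underlying descent data $(\{E_i\},\alpha)$; if one prefers, one can also invoke Theorem \ref{thm: twisted complexes gives an dg-enhancement in general} to see that the local pieces $\mathfrak{L}(U_I)$ already enhance $D_{\mathrm{perf}}(U_I)$.

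I expect the main obstacle to be the second step: pinning down the transferred $A_\infty$-structure exactly, checking that the higher operations vanish so that one lands on the precise \v{C}ech formulas (\ref{equation: composition of maps between graded sheaves})--(\ref{equation: action of maps on sheaves}) with the stated signs rather than on some zig-zag-equivalent but messier structure, and reconciling the sign conventions of the Thom--Whitney totalization with the normalization in (\ref{equation: differential on morphisms of twisted complexes}). The genuinely homotopy-theoretic content --- that the totalization computes the homotopy limit --- I would quote from \cite{block2017explicit}; everything else is careful but routine bookkeeping.
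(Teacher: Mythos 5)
The paper does not prove this theorem: the entire proof is the one-line citation ``See \cite{block2017explicit}.'' There is therefore no internal argument in the paper to compare against; your proposal is an attempt to reconstruct the argument of that external reference.

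As a reconstruction it has a genuine gap, which you locate but do not close. Dupont's contraction between polynomial forms on simplices and normalized simplicial cochains is not an algebra map --- that is precisely why homotopy transfer is invoked at all --- and the $A_\infty$-structure it transfers onto the \v{C}ech totalization carries nontrivial higher operations in general (they encode the Steenrod-type $\cup_i$-products that measure the non-commutativity of the Alexander--Whitney cup product). Your statement that these operations ``vanish because Dupont's map is multiplicative in the range that matters'' is neither a theorem nor justified here, and as written it is not true. Without it, the transfer does not land on the strict dg-algebra of (\ref{equation: composition of maps between graded sheaves}) underlying Definition \ref{defi: twisted complex}; it lands on an $A_\infty$-deformation of it. The conclusion can likely still be reached --- one would need an $A_\infty$-quasi-isomorphism between the transferred structure and the strict \v{C}ech algebra, together with invariance of the Maurer--Cartan dg-category under $A_\infty$-quasi-isomorphism --- but that argument \emph{is} the content of the theorem and is what the cited reference supplies, not ``careful but routine bookkeeping.'' You have set up the right comparison, but the step you defer is the one the whole theorem turns on.
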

\begin{proof}
See \cite{block2017explicit}.
\end{proof}

With Theorem \ref{thm: twisted complex is homotopy limit}, we can interpret the result of Theorem \ref{thm: quasi-equivalence of bundles and twisted complexes} as  follows  \begin{thm}\label{thm: locally free sheaves is weakly equivalent to the homotopy limit in section}
 Let $X$ is a paracompact space with a soft structure sheaf $\mathcal{A}$. Moreover if the open cover $U_i$ is finite and good, then we have a quasi-equivalence between dg-categories
 $$
 \mathfrak{L}(X)\overset{\sim}{\to}\text{holim}_{I}\mathfrak{L}(U_I)
 $$
 \end{thm}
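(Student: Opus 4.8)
The plan is to deduce the statement by composing the two quasi-equivalences already in hand, so the argument is essentially formal. First I would invoke Theorem \ref{thm: quasi-equivalence of bundles and twisted complexes}: since $X$ is paracompact with soft structure sheaf $\mathcal{A}$ and $\{U_i\}$ is finite and good, the twisting functor $\mathcal{T}\colon \mathfrak{L}(X,\mathcal{A})\to \text{Tw}_{\text{perf}}(X,\mathcal{A},U_i)$ is a quasi-equivalence. Next, a good cover in the sense of Definition \ref{defi: good cover} has all finite intersections contractible, so every $U_I$ is in particular p-good (vanishing higher sheaf cohomology, and global resolutions exist since $\mathcal{A}$ is soft); hence Theorem \ref{thm: twisted complex is homotopy limit} applies and yields a quasi-equivalence $\text{Tw}_{\text{perf}}(X)\overset{\sim}{\to}\operatorname{holim}_I \mathfrak{L}(U_I)$. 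Note that finiteness of the cover is only needed for the first step. Composing gives a quasi-equivalence $\mathfrak{L}(X)\overset{\sim}{\to}\operatorname{holim}_I\mathfrak{L}(U_I)$, as asserted.

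The point that deserves a short verification is that this composite is the \emph{canonical} map, i.e. the cone map induced by restricting a global complex of vector bundles to each $U_I$ and recording the strict compatibility of the restrictions. I would check this by unwinding the two identifications: the equivalence of Theorem \ref{thm: twisted complex is homotopy limit}, as constructed in \cite{block2017explicit}, sends a twisted complex $(E_i^\bullet,a)$ to the family of local complexes $(E_i^\bullet,a^{0,1}_i)$ together with the twisting cochain as coherent gluing data, while $\mathcal{T}$ sends a global $(S^\bullet,d)$ to the twisted complex with $E_i^\bullet=S^\bullet|_{U_i}$, $a^{0,1}_i=d|_{U_i}$, $a^{1,0}_{ij}=\mathrm{id}$, and $a^{k,1-k}=0$ for $k\ge 2$. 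Tracing a global complex through both functors produces exactly the restricted family with identity gluing data, which is the image of the canonical cone map, and the same bookkeeping works on morphisms. Hence the composite agrees with the map in the statement.

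The only genuine mathematical content is Theorem \ref{thm: quasi-equivalence of bundles and twisted complexes}, and within it the main obstacle is quasi-essential surjectivity, Proposition \ref{prop: descent of smooth complex on Cech cover}: one must descend an arbitrary twisted perfect complex to a genuine bounded complex of global vector bundles, carried out by downward induction on the amplitude, at each stage gluing local kernels into a global locally free sheaf via the ``descent modulo $Q$'' device of Lemmas \ref{lemma: gluing bundles by transition function modulo Q} and \ref{lemma: higher gluing bundles by twisted cochains} (this is where softness and finiteness of the cover enter, and where the large-rank direct-sum construction lives). Quasi-full-faithfulness, Proposition \ref{prop: descent of morphisms to twisted complex}, is comparatively soft, handled by partition-of-unity averaging. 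Given all of that, the present theorem is a formal consequence, and the residual effort is exactly the hypothesis-matching and diagram-chasing indicated above.
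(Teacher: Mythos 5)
Your argument is exactly the paper's: the theorem is obtained by composing the quasi-equivalence $\mathcal{T}\colon\mathfrak{L}(X,\mathcal{A})\to\mathrm{Tw}_{\mathrm{perf}}(X,\mathcal{A},U_i)$ of Theorem~\ref{thm: quasi-equivalence of bundles and twisted complexes} with the identification $\mathrm{Tw}_{\mathrm{perf}}(X)\simeq\operatorname{holim}_I\mathfrak{L}(U_I)$ of Theorem~\ref{thm: twisted complex is homotopy limit}, and your assessment of where the real work lies (Proposition~\ref{prop: descent of smooth complex on Cech cover}) matches the paper. The extra paragraph checking that the composite coincides with the canonical restriction map is a sensible addition, though the paper does not spell it out.
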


At the end of this paper we should point out that it is important to generalize the result in Theorem \ref{thm: quasi-equivalence of bundles and twisted complexes} from covers to \emph{hypercovers}.

\begin{conj}\label{conj: descent for hypercovers}
Let $(X,\mathcal{A})$ be a compact ringed space with soft structure sheaf $\mathcal{A}$. Then for any hypercover $U_{\bullet}$ of $X$, the natural functor
$$
\mathfrak{L}(X) \rightarrow  \text{ho}\lim_{U_{\bullet}} \mathfrak{L}(U_{\bullet})
$$
is a quasi-equivalence.
\end{conj}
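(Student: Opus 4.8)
The plan is to recast Conjecture \ref{conj: descent for hypercovers} as a statement about twisted complexes, by first extending the formalism of Section \ref{section: review of twisted complex} from \v{C}ech nerves of covers to arbitrary simplicial hypercovers. Given a hypercover $U_\bullet \to X$, one forms the cosimplicial ringed structure $\mathcal{A}(U_\bullet)$ together with the totalization $C^\bullet(U_\bullet,E^\bullet)$ of two--side bounded graded sheaves, with total differential $\delta$ built from the alternating sum of the face maps of $U_\bullet$; the composition and action formulas (\ref{equation: composition of maps between graded sheaves})--(\ref{equation: delta on sheaves}) and Definition \ref{defi: twisted complex} carry over with no change, yielding a pre-triangulated dg-category $\text{Tw}_{\text{perf}}(X,\mathcal{A},U_\bullet)$, and the twisting functor of Definition \ref{defi: twisted functor} extends by the same recipe ($a^{0,1}$ the differential pulled back to $U_0$, $a^{1,0}$ the canonical identification of $E^\bullet$ along the two face maps $U_1\rightrightarrows U_0$, and $a^{k,1-k}=0$ for $k\geq 2$). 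Since the proof in \cite{block2017explicit} identifying $\text{Tw}_{\text{perf}}(X,\mathcal{A},U_i)$ with $\text{holim}_I\mathfrak{L}(U_I)$ is formal in the cosimplicial shape, it should upgrade to a quasi-equivalence $\text{holim}_{U_\bullet}\mathfrak{L}(U_\bullet)\simeq \text{Tw}_{\text{perf}}(X,\mathcal{A},U_\bullet)$, so that the conjecture reduces to: the extended functor $\mathcal{T}\colon \mathfrak{L}(X,\mathcal{A})\to \text{Tw}_{\text{perf}}(X,\mathcal{A},U_\bullet)$ is a quasi-equivalence. Finally, since the opens of $X$ form a basis, every hypercover can be refined to one in which each $U_n$ is a disjoint union of opens of $X$, and each such open is again paracompact with soft structure sheaf by Appendix \ref{appendix: some generalities of soft sheaves}, so the local hypotheses used below hold at every level.

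Quasi-fully faithfulness should follow the pattern of Proposition \ref{prop: descent of morphisms to twisted complex}. Unwinding the definitions, $\text{Tw}_{\text{perf}}(X,\mathcal{A},U_\bullet)(\mathcal{T}(E^\bullet),\mathcal{T}(F^\bullet))$ is the totalization of the complex of $\mathcal{A}$-module sheaves $\underline{\mathrm{Hom}}^\bullet(E,F)$ evaluated on $U_\bullet$, whereas $\mathfrak{L}(X,\mathcal{A})(E^\bullet,F^\bullet)=\Gamma(X,\underline{\mathrm{Hom}}^\bullet(E,F))$. Here hypercovers behave \emph{better} than ordinary covers: by Verdier's theorem (SGA4, Exp.\ V; see also the Brown--Gersten--Thomason formulation in \cite{thomason1990higher}) the hypercohomology of any sheaf over a hypercover computes its cohomology on $X$, giving a convergent spectral sequence with $E_1$-page $H^q(U_p,\underline{\mathrm{Hom}}^\bullet(E,F))$ abutting to $\mathbb{H}^{p+q}(X,\underline{\mathrm{Hom}}^\bullet(E,F))$. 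Because $E^\bullet,F^\bullet$ are finitely generated locally free and $\mathcal{A}$ is soft, each $\underline{\mathrm{Hom}}^n(E,F)$ is a sheaf of modules over a soft ring, hence soft and $\Gamma$-acyclic on every open (Appendix \ref{appendix: some generalities of soft sheaves}); the spectral sequence therefore collapses to the edge, identifying the totalization with $\Gamma(X,\underline{\mathrm{Hom}}^\bullet(E,F))$. Equivalently, one writes the contracting homotopy down by hand using a partition of unity subordinate to $U_0$ exactly as in Proposition \ref{prop: descent of morphisms to twisted complex}, the higher face maps of $U_\bullet$ only affecting the combinatorial bookkeeping.

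Quasi-essential surjectivity is the substantial point, and I would imitate the d\'{e}vissage of Proposition \ref{prop: descent of smooth complex on Cech cover}: given a twisted perfect complex $\mathcal{F}=(F^\bullet_\bullet,b)$ over $U_\bullet$, build a bounded complex of finitely generated locally free $\mathcal{A}$-modules $E^\bullet$ on $X$ with a weak equivalence $\mathcal{T}(E^\bullet)\to\mathcal{F}$ by downward induction on the amplitude, constructing $E^m$ at each step as a large-rank locally free sheaf on $X$ obtained by extending the kernel sheaves $\ker c^{0,1}$ from the top level $U_0$ (locally free by Corollary \ref{coro: kernel of acyclic above complex is a vector bundle}) and gluing by a partition of unity together with the higher twisting cochains. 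The technical heart is the analogue over a hypercover of Lemma \ref{lemma: gluing bundles by transition function modulo Q} and Lemma \ref{lemma: higher gluing bundles by twisted cochains}: for a \v{C}ech nerve the gluing data and all cocycle identities are indexed by tuples $(i_0,\dots,i_k)$, but for a hypercover $U_n$ is an \emph{arbitrary} cover of $(\mathrm{cosk}_{n-1}U_\bullet)_n$ and the face-map combinatorics must be propagated through the induction. I expect this to be the main obstacle, and it is essentially why the statement is left as a conjecture here. A plausible route that tames it: first establish descent against the \v{C}ech nerve $\check{C}(U_0\to X)$ alone --- which, after refining $U_0$ to a finite good cover using compactness of $X$, is exactly Theorem \ref{thm: quasi-equivalence of bundles and twisted complexes}/Theorem \ref{thm: locally free sheaves is weakly equivalent to the homotopy limit in section} --- and then filter $\text{holim}_{U_\bullet}$ by the skeleta $\Delta_{\le n}$ and show the resulting tower stabilizes. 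Stabilization is where softness enters a second time: it forces the relevant presheaf cohomology to vanish in positive degrees, i.e.\ the small site of $X$ has cohomological dimension $0$ for $\mathcal{A}$-modules, which is the classical condition under which \v{C}ech descent implies hypercover descent (hypercompleteness being automatic in finite cohomological dimension). Carrying out this tower argument rigorously --- or, equivalently, organizing the d\'{e}vissage so that the hypercover is handled one simplicial level at a time --- is the content that remains to be supplied.
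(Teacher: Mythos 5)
The statement you address is explicitly a \emph{conjecture} in the paper; no proof is given, only a remark on its significance via Holstein's fibrancy criterion (Proposition \ref{prop: fibrant obj in dg sheaves and descent}). There is thus no paper proof to compare against, and you are right to present this as a programme rather than a finished argument.

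Your quasi-full-faithfulness sketch is essentially sound: each $\text{Hom}^n(E,F)$ is a module over the soft ring $\mathcal{A}$, hence soft (Proposition \ref{prop: sheaf of modules of soft rings is fine}) and $\Gamma$-acyclic, so Verdier's hypercover theorem collapses the totalization over $U_\bullet$ to $\Gamma(X,\text{Hom}^\bullet(E,F))$. But two gaps remain, both of which you partly flag. First, the twisted-complex formalism of Section \ref{section: review of twisted complex} is tied to \v{C}ech nerves in an essential way: the composition rule (\ref{equation: composition of maps between graded sheaves}) splits an index string $(i_0,\ldots,i_{p+r})$ at $i_p$, and this has no evident analogue when $U_n$ is an arbitrary refinement of $(\text{cosk}_{n-1}U_\bullet)_n$ rather than a coproduct of $(n+1)$-fold intersections; so even \emph{defining} $\text{Tw}_{\text{perf}}(X,\mathcal{A},U_\bullet)$ for a non-\v{C}ech hypercover needs new input, and the identification of $\text{Tw}_{\text{perf}}$ with $\text{holim}$ in \cite{block2017explicit} is proved only for \v{C}ech covers. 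Second, your invocation of ``cohomological dimension $0$'' is not what softness gives: softness yields $\Gamma$-acyclicity of $\mathcal{A}$-modules, not vanishing of higher cohomology for all abelian sheaves on $X$, and the general principle that \v{C}ech descent plus finite cohomological dimension implies hypercover descent is about the latter (or at least needs to be localized to the presheaf at hand). What is actually needed is either a hypercover analogue of the d\'evissage in Lemmas \ref{lemma: gluing bundles by transition function modulo Q}--\ref{lemma: higher gluing bundles by twisted cochains}, or a Postnikov-tower argument specific to $\mathfrak{L}$ showing that $\text{holim}_{U_\bullet}\mathfrak{L}$ stabilizes along skeleta; supplying either is precisely the open content that makes this a conjecture.
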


Let us briefly talk about the significance of Conjecture \ref{conj: descent for hypercovers}. 
We consider dgCat-$Pr(X)$, the category of  presheaves valued in  dg-categories and we have the  local model structure on dgCat-$Pr(X)$, see \cite{holstein2014morita}. In this case we have the following result.

\begin{prop}[Lemma 2.9 in \cite{holstein2014morita}]\label{prop: fibrant obj in dg sheaves and descent}
If $\mathcal{F}\in \text{dgCat-}Pr(X)$ is both objectwise fibrant and satisfies descent for any hypercover, then $\mathcal{F}$ is an fibrant object for the local model structure on dgCat-$Pr(X)$.
\end{prop}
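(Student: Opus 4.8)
The plan is to deduce this from the general theory of left Bousfield localization of model categories. By construction (see \cite{holstein2014morita}) the local model structure on $\mathrm{dgCat}\text{-}Pr(X)$ is the left Bousfield localization of the projective model structure at the class $\mathcal{H}$ of \emph{hypercover maps}: for a hypercover $U_\bullet\to U$ this is the canonical map $\mathrm{hocolim}_{[n]}F_{U_n}(k)\to F_U(k)$, where $F_V$ is the left adjoint to evaluation at the open set $V$ (left Kan extension along $\{V\}\hookrightarrow\mathrm{Open}(X)$) and $k$ is the unit dg-category. The standard characterization of fibrant objects in a left Bousfield localization says that $\mathcal{F}$ is fibrant in the local structure if and only if (i) $\mathcal{F}$ is projectively fibrant, and (ii) $\mathcal{F}$ is $\mathcal{H}$-local. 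Condition (i) is exactly objectwise fibrancy, since projective fibrations are objectwise fibrations. So everything reduces to showing that the descent hypothesis implies $\mathcal{H}$-locality.

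For this I would first use the adjunction $(F_V,\mathrm{ev}_V)$ together with the objectwise fibrancy of $\mathcal{F}$ to identify, functorially in $V$, the derived mapping space $\mathrm{Map}(F_V(k),\mathcal{F})$ with $\mathrm{Map}_{\mathrm{dgCat}}(k,\mathcal{F}(V))$. Commuting the homotopy colimit in the source past the derived mapping functor then gives $\mathrm{Map}(\mathrm{hocolim}_{[n]}F_{U_n}(k),\mathcal{F})\simeq\mathrm{holim}_{[n]}\mathrm{Map}_{\mathrm{dgCat}}(k,\mathcal{F}(U_n))$, so that $\mathcal{H}$-locality of $\mathcal{F}$ becomes the assertion that $\mathrm{Map}_{\mathrm{dgCat}}(k,\mathcal{F}(U))\to\mathrm{holim}_{[n]}\mathrm{Map}_{\mathrm{dgCat}}(k,\mathcal{F}(U_n))$ is a weak equivalence for every hypercover. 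This follows by applying the homotopy-limit-preserving functor $\mathrm{Map}_{\mathrm{dgCat}}(k,-)$ to the descent equivalence $\mathcal{F}(U)\xrightarrow{\sim}\mathrm{holim}_{[n]}\mathcal{F}(U_n)$; to make weak equivalences of dg-categories actually detected one tests $\mathcal{H}$-locality against a slightly larger generating set (shifts, cones, the endomorphism complex of the unit object) rather than $k$ alone, which is legitimate since these are built from the $F_V$'s by pretriangulated operations, in the same spirit as the rest of the paper.

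The main obstacle is this last compatibility step: checking that the functor $\mathrm{Map}(-,\mathcal{F})$ really does turn the explicit $\mathrm{hocolim}$ presentation of a hypercover into the cosimplicial homotopy limit $\mathrm{holim}_{[n]}\mathcal{F}(U_n)$ appearing in the formulation of descent, with no cofibrancy or fibrancy replacement lost along the way, and that this holds uniformly enough to conclude locality against the whole class $\mathcal{H}$ from locality against a generating \emph{set} of maps (the bounded family of hypercovers used to form the localization) — the latter following from a standard cofinality argument among hypercovers. Once these identifications are in place the proof is immediate, and the same bookkeeping yields the converse direction, which is not needed here.
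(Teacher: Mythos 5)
The paper does not actually prove this proposition; it simply cites it as Lemma 2.9 of \cite{holstein2014morita}, so there is no ``paper's own proof'' to compare against. Evaluating your argument on its own merits: the Bousfield-localization framework is the right one, and the core chain of reductions --- fibrant in the local structure iff projectively fibrant and $\mathcal{H}$-local, projective fibrancy $=$ objectwise fibrancy, descent $\Rightarrow$ $\mathcal{H}$-locality by adjunction plus the fact that $\mathrm{Map}(k,-)$ \emph{preserves} weak equivalences of fibrant dg-categories and commutes with homotopy limits --- is correct and does prove the statement. That part is fine and is essentially how one expects the source to argue.

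However, two things in your write-up reveal a misunderstanding worth flagging. First, the aside about testing $\mathcal{H}$-locality against ``a slightly larger generating set (shifts, cones, the endomorphism complex of the unit object)'' in order to make weak equivalences of dg-categories ``actually detected'' is both unnecessary and illegitimate for this direction. You do not need $\mathrm{Map}_{\mathrm{dgCat}}(k,-)$ to \emph{detect} weak equivalences here; you only need it to \emph{preserve} them, which it does, since you are starting from the given descent equivalence $\mathcal{F}(U)\xrightarrow{\sim}\mathrm{holim}_{[n]}\mathcal{F}(U_n)$ and pushing it forward. Moreover the class $\mathcal{H}$ at which one localizes is fixed as part of the definition of the local model structure; silently enlarging it would change the model structure you are proving fibrancy in. Second, and more seriously, your closing remark that ``the same bookkeeping yields the converse direction'' is false, and is explicitly contradicted by the remark in the paper immediately following the proposition (quoting Remark 2.13 of \cite{holstein2014morita}): the converse fails precisely because $\mathrm{Map}_{\mathrm{dgCat}}(k,-)$ does \emph{not} detect weak equivalences of dg-categories, so $\mathcal{H}$-locality is strictly weaker than descent. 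This is not a cosmetic slip: the gap between ``preserve'' and ``detect'' is exactly what makes this an implication rather than an equivalence, and your proof proposal conflates the two at both places.
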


\begin{rmk}
According to Remark 2.13 in \cite{holstein2014morita}, we do not have the "if and only if" result in Proposition \ref{prop: fibrant obj in dg sheaves and descent} mainly because the category dgCat does not have a symmetric monoidal model structure.
\end{rmk}

With Proposition \ref{prop: fibrant obj in dg sheaves and descent} in hand, if Conjecture \ref{conj: descent for hypercovers} is true, then the presheaf $\mathfrak{L}$ is a fibrant object in the category of presheaves of dg-categories.

\appendix
\section{Some generalities of soft sheaves}\label{appendix: some generalities of soft sheaves}
We collect here some results in sheaf theory which is necessary for our use, for reference see \cite{bredon1997sheaf} Chapter I and II.

\begin{defi}\label{defi: soft sheaf}
A sheaf $\mathcal{F}$ on a topological space $X$ is called \emph{soft} if any section over any closed subset of $X$ can be extended to a global section. In other words, for any closed subset $K\subset X$, the restriction map $\mathcal{F}(X)\rightarrow \mathcal{F}(K) $ is surjective.
\end{defi}

We also have the  concept of fine sheaf, which is related to soft sheaf.

\begin{defi}\label{defi: fine sheaf}
Let $X$ be a paracompact space. A sheaf $\mathcal{F}$ of groups over $X$ is \emph{fine} if for every two disjoint closed subsets $A,B\subset X, A\cap B=\emptyset $, there is an endomorphism of the sheaf of groups $\mathcal{F}\rightarrow \mathcal{F}$  which restricts to the identity in a neighborhood of $A$ and to the $0$ endomorphism in a neighborhood of $B$.
\end{defi}

Every fine sheaf is soft but in general not every soft sheaf is fine. However we have the following proposition:

\begin{prop}\label{prop: sheaf of modules of soft rings is fine}
Let $X$ be a paracompact space and $\mathcal{A}$ a soft sheaf of rings with unit on $X$. Then any sheaf of $\mathcal{A}$-modules is fine. In particular $\mathcal{A}$-itself is fine.
\end{prop}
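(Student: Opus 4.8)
The plan is to reduce the statement to producing, for any two disjoint closed subsets $A,B\subset X$, a single global section $s\in\mathcal{A}(X)$ that equals the unit $1$ on an open neighborhood of $A$ and $0$ on an open neighborhood of $B$. Once such an $s$ is in hand, for any sheaf $\mathcal{M}$ of $\mathcal{A}$-modules the multiplication map $m_s\colon\mathcal{M}\to\mathcal{M}$, $t\mapsto s\cdot t$ (on sections over an open $W$, $t\mapsto s|_W\cdot t$), is an endomorphism of $\mathcal{M}$ as a sheaf of abelian groups, indeed of $\mathcal{A}$-modules, because $s$ is a global section; and by construction it restricts to the identity on a neighborhood of $A$ and to the zero endomorphism on a neighborhood of $B$. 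This is precisely the condition in Definition~\ref{defi: fine sheaf}, so every such $\mathcal{M}$ is fine, and taking $\mathcal{M}=\mathcal{A}$ gives the final assertion.

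To build $s$, I would first use that a paracompact space is normal. Pick disjoint open sets $V_A\supset A$ and $V_B\supset B$, and then, shrinking near $A$, an open set $W_A$ with $A\subset W_A\subset\overline{W_A}\subset V_A$. The closed sets $\overline{W_A}$ and $X\setminus V_A$ are then disjoint; moreover $W_A$ is an open neighborhood of $A$ contained in $\overline{W_A}$, and $V_B$ is an open neighborhood of $B$ contained in $X\setminus V_A$. Since $\overline{W_A}$ and $X\setminus V_A$ are disjoint closed sets in a normal space, a section of $\mathcal{A}$ over their union $K:=\overline{W_A}\cup(X\setminus V_A)$ is the same as an independent choice of a section on each piece, so let $\sigma\in\mathcal{A}(K)$ be given by the unit $1$ on $\overline{W_A}$ and by $0$ on $X\setminus V_A$; this makes sense precisely because $\mathcal{A}$ has a unit. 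Now apply the softness of $\mathcal{A}$ (Definition~\ref{defi: soft sheaf}) to extend $\sigma$ to a global section $s\in\mathcal{A}(X)$. Then $s|_{W_A}=1$ and $s|_{V_B}=0$, as required.

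The only genuine subtlety is the passage from closed sets to honest \emph{neighborhoods}: softness only lets one extend a section \emph{from} a closed set, so we must prescribe the value $1$ on a closed neighborhood $\overline{W_A}$ of $A$ (not just on $A$) and the value $0$ on the closed set $X\setminus V_A$, which is large enough to contain the open neighborhood $V_B$ of $B$. Arranging this is exactly the point of the normality-and-shrinking step; without it one would only get an endomorphism equal to $\mathrm{id}$ resp.\ $0$ on $A$ and $B$ themselves, which is not what the definition of fine demands. Everything else — that $m_s$ is a morphism of sheaves of $\mathcal{A}$-modules and that its restrictions are as claimed — is routine.
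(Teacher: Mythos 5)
Your proof is correct and is essentially the standard argument that the paper delegates to Bredon (Theorem 9.16): use normality to produce disjoint closed sets $\overline{W_A}$ and $X\setminus V_A$, use softness to extend the section that is $1$ on the former and $0$ on the latter to a global $s\in\mathcal{A}(X)$, and then multiply by $s$. One small remark: $m_s$ is \emph{a priori} only an endomorphism of $\mathcal{M}$ as a sheaf of abelian groups, which is all that Definition~\ref{defi: fine sheaf} requires; it is $\mathcal{A}$-linear only when $s$ is central (e.g.\ $\mathcal{A}$ commutative), so your parenthetical claim that $m_s$ is a morphism of $\mathcal{A}$-modules is harmless here but slightly more than is needed and not true in full generality.
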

\begin{proof}
See \cite{bredon1997sheaf} Theorem 9.16.
\end{proof}

\begin{eg}\label{eg: fine sheaves}
Let $X$ be a paracompact Hausdorff topological space, then the sheaf of continuous functions on $X$ is soft and hence fine.

Moreover, let $X$ be a Hausdorff smooth manifold, then the sheaf of $C^{\infty}$-functions on  $X$ is soft and hence fine.
\end{eg}

We have the following properties for soft sheaf of rings:

\begin{lemma}\label{lemma: partition of unity}
Let $(X, \mathcal{A})$ be a paracompact space such that the structure sheaf $\mathcal{A}$ is soft, then for any locally finite open cover $\mathcal{U}=\{U_i\}$ of $X$, there exists a partition of unity subordinate to the open cover $U_i$, i.e. there exists sections $\rho_i$ of $\mathcal{A}$ such that $\text{supp}\rho_i\subset U_i$ and $\sum_i \rho_i=1$.
\end{lemma}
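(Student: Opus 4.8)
The plan is to imitate the classical construction of a smooth partition of unity, using the softness of $\mathcal{A}$ in place of bump functions and a telescoping trick to deal with the fact that $\mathcal{A}$ may be noncommutative and the relevant products may involve infinitely many indices (only finitely many of which are locally nontrivial).

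First I would apply the shrinking lemma for locally finite open covers of a paracompact space twice: from $\{U_i\}$ I obtain an open cover $\{V_i\}$ with $\overline{V_i}\subseteq U_i$, and then from $\{V_i\}$ an open cover $\{V_i'\}$ with $\overline{V_i'}\subseteq V_i$. For each $i$ the closed sets $\overline{V_i'}$ and $X\setminus V_i$ are disjoint, so their union $K_i$ is a closed subspace of $X$ that is the topological disjoint union of these two pieces (each is open in $K_i$). Hence the pair given by the section $1$ on $\overline{V_i'}$ and the section $0$ on $X\setminus V_i$ is a well-defined section $\bar s_i\in\mathcal{A}(K_i)$, and by softness (Definition \ref{defi: soft sheaf}) it extends to a global section $s_i\in\mathcal{A}(X)$ with $(s_i)_x=1_x$ for $x\in\overline{V_i'}$ and $(s_i)_x=0$ for $x\notin V_i$. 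In particular $\mathrm{supp}\,s_i\subseteq V_i\subseteq U_i$, and $1-s_i$ vanishes on the open set $V_i'$.

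Next I fix a total order on the index set $I$ and set $\rho_i:=\big(\prod_{j<i}(1-s_j)\big)\,s_i$, where the product is interpreted locally: near a point $x$ only the finitely many factors with $x\in\mathrm{supp}\,s_j$ differ from $1$ (by local finiteness), and we take their ordered product in increasing order of $j$; inserting extra factors equal to $1$ does not change a product, so these local products patch to a global section, and $\rho_i\in\mathcal{A}(X)$ with $\mathrm{supp}\,\rho_i\subseteq\mathrm{supp}\,s_i\subseteq U_i$. Since $\{U_i\}$ is locally finite, so is $\{\mathrm{supp}\,\rho_i\}$, hence $\sum_i\rho_i$ is a well-defined section. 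To check $\sum_i\rho_i=1$, which is a local statement, I fix $x$ and let $J=\{j:x\in\mathrm{supp}\,s_j\}$, a finite set; near $x$ every $\rho_i$ with $i\notin J$ vanishes, and the algebraic identity $\big(\prod_{j<i}(1-s_j)\big)s_i=\prod_{j<i}(1-s_j)-\prod_{j\le i}(1-s_j)$ telescopes $\sum_{i\in J}\rho_i$ to $1-\prod_{j\in J}(1-s_j)$ near $x$. Finally, since $\{V_i'\}$ covers $X$ there is $k\in J$ with $x\in V_k'$, so the factor $1-s_k$ vanishes on the neighborhood $V_k'$ of $x$, whence $\prod_{j\in J}(1-s_j)=0$ near $x$ and $\sum_i\rho_i=1$ near $x$; as $x$ was arbitrary, $\sum_i\rho_i=1$ in $\mathcal{A}(X)$.

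The only genuinely delicate points are the bookkeeping for the possibly infinite, noncommutative products — handled by fixing the total order and only ever multiplying the finitely many locally nontrivial factors — and the identification of a section over the closed set $K_i$ with a pair of sections over its two clopen pieces, which is precisely where softness enters; the rest is routine point-set topology. (Alternatively one may simply cite \cite{bredon1997sheaf}, Chapter II, Theorem 9.16.)
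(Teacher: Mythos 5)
Your proof is correct but takes a genuinely different, more self-contained route than the paper's. The paper reduces the statement to the auxiliary notion of a \emph{fine} sheaf: it invokes Proposition~\ref{prop: sheaf of modules of soft rings is fine} (soft implies fine, via Bredon, Theorem II.9.16) and then treats the passage from fineness to partitions of unity as obvious. You instead build the partition directly from softness: shrink the locally finite cover twice, extend the tautological section on the disjoint closed union $\overline{V_i'}\sqcup(X\setminus V_i)$ to a global cutoff $s_i$ with $\mathrm{supp}\,s_i\subseteq V_i\subseteq U_i$, and then assemble $\rho_i=\bigl(\prod_{j<i}(1-s_j)\bigr)s_i$ by an ordered, locally finite, telescoping product. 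The telescoping trick is what makes this work in generality: it avoids any division (no need for $\sum_j s_j$ to be invertible, as in the smooth bump-function normalization) and is insensitive to noncommutativity of $\mathcal{A}$, which the paper's appeal to fineness handles only implicitly. Two cosmetic remarks: the identification of $\Gamma(K_i,\mathcal{A})$ with pairs of sections on the two clopen pieces is pure sheaf theory and is not itself where softness enters — softness is used only at the next step, to extend $\bar s_i$ from the closed set $K_i$ to all of $X$; and in the local verification of $\sum_i\rho_i=1$ it is slightly cleaner to index the telescoping sum by the finite set of $j$ whose supports meet a fixed small neighborhood $N$ of $x$ (guaranteed finite by local finiteness), rather than by the $j$ with $x\in\mathrm{supp}\,s_j$, so that all identities hold uniformly on $N\cap V_k'$. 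Neither point affects correctness.
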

\begin{proof}
We know that $\mathcal{A}$ is soft hence fine. Therefore it is obvious.
\end{proof}

\begin{lemma}\label{lemma: extension of a locally free finitely generated sheaf}
Let $(X, \mathcal{A})$ be a paracompact space such that the structure sheaf $\mathcal{A}$ is soft. Let $U$ be a contractible open subset of $X$, then any finite rank locally free sheaf on $U$ could be extended to a finite rank locally free sheaf on $X$.
\end{lemma}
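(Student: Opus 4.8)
The plan is to reduce the lemma to the triviality of finite-rank locally free sheaves on a contractible paracompact space, and then extend by a free sheaf.

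\textit{Reduction.} Since $U$ is connected, the given locally free sheaf $E$ on $U$ has a well-defined rank $n$, and I claim $E\cong(\mathcal{A}|_U)^{\oplus n}$. Granting this, put $\widetilde{E}:=\mathcal{A}^{\oplus n}$, a finite-rank locally free sheaf on $X$; then $\widetilde{E}|_U=(\mathcal{A}|_U)^{\oplus n}\cong E$, so $\widetilde{E}$ is the desired extension. Here $\mathcal{A}|_U$ is again a soft sheaf of rings (restriction of a soft sheaf to an open subset is soft), so by Proposition \ref{prop: sheaf of modules of soft rings is fine} every sheaf of $\mathcal{A}|_U$-modules — in particular $E$ and its endomorphism sheaf — is fine, hence soft; this supplies the partitions of unity of Lemma \ref{lemma: partition of unity} and the extendability of local sections that the next step uses.

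\textit{Triviality.} Pick a locally finite open cover $\{V_\alpha\}$ of $U$ with $\mathcal{A}$-linear trivializations $\psi_\alpha\colon(\mathcal{A}|_{V_\alpha})^{\oplus n}\xrightarrow{\ \sim\ }E|_{V_\alpha}$, and set $g_{\alpha\beta}=\psi_\alpha^{-1}\psi_\beta\in GL_n(\mathcal{A}(V_\alpha\cap V_\beta))$; these form a $1$-cocycle, and $E$ is trivial precisely when its class in $\check{H}^1(U;GL_n(\mathcal{A}|_U))$ vanishes. I would establish this by the classical homotopy-invariance mechanism: contractibility of $U$ lets one pass, after refining the cover, to trivializations whose transition functions are homotopic to the identity, and then, given a partition of unity $\{\rho_\alpha\}$ subordinate to the refined cover, the $\mathcal{A}$-module map $\sum_\alpha\rho_\alpha\psi_\alpha\colon(\mathcal{A}|_U)^{\oplus n}\to E$ — each summand $\rho_\alpha\psi_\alpha$ extended by zero off $V_\alpha$, which is legitimate since $\operatorname{supp}\rho_\alpha\subset V_\alpha$ — is a stalkwise isomorphism, since at each point it composes one $\psi_\gamma$ with a convex combination of matrices close to the identity. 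Equivalently, $E$ is isomorphic to the pullback of $E$ along the constant map $U\to\{p\}$, which is free.

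\textit{Main obstacle.} The real work is the triviality step. For the structure sheaves that motivate the paper — continuous or smooth functions — it is exactly the classical theorem that a vector bundle over a contractible paracompact space is trivial. For a general soft sheaf of rings there is no pullback along maps of spaces, so one must instead verify directly that softness and fineness are enough to run the "fine cover with near-identity transition functions, then average" argument along the contraction; this, rather than the reduction or the construction of $\widetilde{E}$, is where the difficulty lies.
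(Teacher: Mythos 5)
Your argument is, in outline, exactly the paper's: its entire proof reads ``Since $U$ is contractible, we know that any finite rank locally free sheaf on $U$ is trivializable and can be extended to $X$,'' so your reduction (triviality of $E$ on $U$, then extension by the free sheaf $\mathcal{A}^{\oplus n}$) is precisely what the paper does, just spelled out. You are also right that the triviality step is the substantive claim, and right to be uneasy about it. That unease is warranted: the tools you invoke in the ``Triviality'' paragraph --- transition matrices ``close to the identity,'' convex combinations of near-identity matrices, and pullback of $E$ along the constant map $U\to\{p\}$ --- make sense for $\mathcal{A}=C^0_X$ or $C^{\infty}_X$, but not for an arbitrary soft sheaf of rings, which carries no topology on its stalks and admits no pullback functor along maps of spaces. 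For those motivating examples the triviality is the classical theorem that a finite-rank vector bundle over a contractible paracompact space is trivial; for a general soft $\mathcal{A}$ it is not a standard result, and neither your proposal nor the paper's one-line proof establishes it. So the step you honestly flag is a genuine gap, but it is the same gap the paper itself elides; your write-up matches the paper's approach, is no weaker than its own justification, and correctly identifies where an additional hypothesis on $\mathcal{A}$ (or an extension argument that avoids trivializing $E$ first) would be needed to close it.
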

\begin{proof}
Since $U$ is contractible, we know that any finite rank locally free sheaf on $U$ is trivializable and can be extended to $X$.
\end{proof}

We also have the following useful lemma which is not limited to modules of soft sheaves:

\begin{lemma}\label{lemma: kernel of surjective locally free sheaf map is still locally free}
Let $(X, \mathcal{A})$ be a paracompact space such that the structure sheaf $\mathcal{A}$ is soft, $\mathcal{E}$ and $\mathcal{F}$ are two locally free finite generated sheaves of $\mathcal{A}$-modules  and $\pi$ is an $\mathcal{A}$-module map between them. If $\pi$ is surjective, then $\ker \pi$ is  a locally free finite generated  sub-sheaf of $\mathcal{E}$.
\end{lemma}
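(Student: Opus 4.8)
The statement is local on $X$, so the plan is to show that each point $x\in X$ has an open neighborhood on which $\ker\pi$ is free of finite rank; the idea is to split $\pi$ locally and then recognize $\ker\pi$ as a local direct summand of a free sheaf. First I would shrink to a neighborhood $U$ of $x$ over which both $\mathcal{E}$ and $\mathcal{F}$ are free, say $\mathcal{E}|_U\cong\mathcal{A}^n|_U$ and $\mathcal{F}|_U\cong\mathcal{A}^m|_U$, and let $e_1,\dots,e_m\in\mathcal{F}(U)$ be the images of the standard basis. Since $\pi$ is a surjective morphism of sheaves, each stalk map $\pi_y$ is surjective; in particular the germ $(e_i)_x$ lifts to $\mathcal{E}_x$, so, picking representatives and intersecting the finitely many neighborhoods on which $\pi(s_i)=e_i$ holds, we obtain an open $V\ni x$ with $V\subseteq U$ and sections $s_i\in\mathcal{E}(V)$ satisfying $\pi(s_i)=e_i|_V$. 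Sending $e_i|_V\mapsto s_i$ and extending $\mathcal{A}|_V$-linearly defines a morphism $\sigma\colon\mathcal{F}|_V\to\mathcal{E}|_V$ with $\pi\circ\sigma=\mathrm{id}$.

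Then $\varepsilon:=\sigma\pi\in\mathrm{End}_{\mathcal{A}|_V}(\mathcal{E}|_V)$ is idempotent and $\ker(\pi)|_V=\ker\varepsilon=\mathrm{im}(\mathrm{id}-\varepsilon)$, so $\mathcal{E}|_V=\sigma(\mathcal{F}|_V)\oplus\ker(\pi)|_V$. In other words $\ker(\pi)|_V$ is a direct summand of the free sheaf $\mathcal{A}^n|_V$ — equivalently the image of an idempotent matrix in $M_n(\mathcal{A}(V))$ — and it is finitely generated, being a summand of the finitely generated sheaf $\mathcal{E}|_V$. All of this is formal and uses only that $\mathcal{E},\mathcal{F}$ are locally free of finite rank and that $\pi$ is an epimorphism of sheaves.

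The hard part will be the last step: concluding that, after possibly shrinking $V$ once more, a finitely generated direct summand of a free $\mathcal{A}$-module sheaf is itself free of finite rank. This is the assertion that finitely generated projective $\mathcal{A}$-module sheaves are vector bundles, and it is exactly where the nature of $\mathcal{A}$ enters. For the sheaves of interest in this paper — the sheaf of continuous or of $C^\infty$ functions, as in Example \ref{eg: fine sheaves} — this is classical: the idempotent $\mathrm{id}-\varepsilon$ has locally constant rank, so near $x$ it is conjugate to a constant idempotent and its image is trivialized; I would invoke this standard fact (see \cite{bredon1997sheaf}).

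Alternatively, staying with a soft $\mathcal{A}$ whose stalks are local rings, I would argue directly with Nakayama: $(\ker\pi)_x$ is finitely generated and projective over the local ring $\mathcal{A}_x$, hence free, say of rank $r$; lifting a basis to sections over a neighborhood $W\subseteq V$ gives a morphism $\mathcal{A}^r|_W\to\ker(\pi)|_W$ which is an isomorphism on the stalk at $x$. Because both sheaves are finitely generated, surjectivity propagates to a neighborhood (the cokernel is finitely generated with vanishing stalk at $x$), and a left inverse to the corresponding matrix at $x$ lifts and stays invertible near $x$ (its determinant is a unit at $x$, hence on a neighborhood, using that $\mathcal{A}_x$ is local), so the morphism is an isomorphism near $x$. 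Either way one obtains $\ker(\pi)|_W\cong\mathcal{A}^r|_W$, completing the proof.
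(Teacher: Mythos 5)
The paper's ``proof'' of this lemma is the single sentence ``It is a standard result in sheaf theory,'' so there is nothing to compare your argument against; what you have done is supply the proof the paper omits. Your first two steps are correct and formal: shrink so $\mathcal{E}$ and $\mathcal{F}$ are free, lift a local basis of $\mathcal{F}$ through the stalkwise-surjective $\pi$ to get a splitting $\sigma$ on a smaller neighborhood $V$, and observe that $\ker(\pi)|_V$ is the image of the idempotent $\mathrm{id}-\sigma\pi$, hence a finitely generated direct summand of a free sheaf. You also identify the crux correctly: what remains is that a finitely generated projective sheaf of $\mathcal{A}$-modules is locally free, and this genuinely needs more than softness alone.

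Both of your two routes for the crux are sound, and the Nakayama route is the one that matches the paper's generality. But do note that it relies on the stalks $\mathcal{A}_x$ being commutative local rings, a hypothesis the lemma as stated does not impose. That hypothesis is not cosmetic: take $X$ a single point and $\mathcal{A}$ the constant sheaf valued in a commutative ring $R$ that has a stably free, non-free finitely generated module $P$ with $P\oplus R^k\cong R^n$. Then $\mathcal{A}$ is (vacuously) soft, $\pi\colon R^n\to R^k$ is a surjection of free modules, and $\ker\pi\cong P$ is finitely generated projective but not free, so the lemma fails. The paper silently assumes local stalks elsewhere (it speaks of locally ringed spaces in Definition~\ref{defi: good space}, and its running examples are $C^0$ and $C^\infty$), so you are not contradicting the paper — but the assumption should be made explicit in a self-contained proof. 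One small simplification at the end of your Nakayama route: once $u\colon\mathcal{A}^r|_W\twoheadrightarrow\ker(\pi)|_W$ is an isomorphism on the stalk at $x$, rather than lifting a left inverse of a non-square matrix, split $u$ using projectivity of $\ker(\pi)|_W$; the resulting idempotent $e\in M_r(\mathcal{A}(W))$ has $e_x=I$, so $\det e$ is a unit near $x$, $e$ is invertible near $x$, and an invertible idempotent is the identity — giving at once that $u$ is injective, hence an isomorphism, on a neighborhood of $x$.
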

\begin{proof}
It is a standard result in sheaf theory.
\end{proof}

\begin{coro}\label{coro: kernel of acyclic above complex is a vector bundle}
Let $(X, \mathcal{A})$ be a paracompact space such that the structure sheaf $\mathcal{A}$ is soft. Let $(\mathcal{F}^{\bullet},d^{\bullet})$ be a bounded above cochain complex of locally free finite generated sheaves of $\mathcal{A}$-modules on $X$. If $l$ is an integer such that the cohomology $H^n(\mathcal{F}^{\bullet},d^{\bullet})=0$ for all $n>l$, then for any $n\geq l$, $\ker d^n$ is a locally free finitely generated sub-sheaf of $\mathcal{F}^n$.
\end{coro}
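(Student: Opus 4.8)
The plan is to argue by descending induction on the degree, using Lemma~\ref{lemma: kernel of surjective locally free sheaf map is still locally free} at each step. Since $(\mathcal{F}^{\bullet},d^{\bullet})$ is bounded above, I would first fix an integer $N$ with $\mathcal{F}^m=0$ for all $m>N$. If $l>N$ there is nothing to prove, because then $\ker d^n=\mathcal{F}^n=0$ is locally free of rank zero for every $n\ge l$; so I may assume $l\le N$.

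For the base of the induction, observe that $\ker d^{N+1}$ is a sub-sheaf of $\mathcal{F}^{N+1}=0$, hence locally free and finitely generated. For the inductive step, suppose $l\le n\le N$ and that $\ker d^{n+1}$ is already known to be locally free and finitely generated. Since $d^{n+1}\circ d^n=0$, the map $d^n\colon \mathcal{F}^n\to\mathcal{F}^{n+1}$ factors through a map $\bar d^n\colon \mathcal{F}^n\to\ker d^{n+1}$, and the hypothesis $H^{n+1}(\mathcal{F}^{\bullet},d^{\bullet})=0$ --- which is available precisely because $n+1>l$ --- says exactly that $\bar d^n$ is surjective. As $\mathcal{F}^n$ and $\ker d^{n+1}$ are both locally free and finitely generated, Lemma~\ref{lemma: kernel of surjective locally free sheaf map is still locally free} applies to $\bar d^n$ and shows that $\ker\bar d^n=\ker d^n$ is a locally free finitely generated sub-sheaf of $\mathcal{F}^n$. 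Iterating this from $n=N$ down to $n=l$ yields the assertion for all $n\ge l$.

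I do not expect a genuine obstacle here. The only points that need a moment's care are that ``bounded above'' is exactly what is needed to start the induction at the top, and that the cohomology vanishing invoked at the step in degree $n$ is $H^{n+1}=0$, which the hypothesis supplies for every $n+1>l$; since each step of the induction has $n\ge l$, this is always available and the induction never leaves the range where the hypothesis holds.
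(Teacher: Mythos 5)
Your argument is correct and is exactly the downward-induction-plus-Lemma~\ref{lemma: kernel of surjective locally free sheaf map is still locally free} proof that the paper sketches, just written out in full detail. No discrepancy.
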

\begin{proof}
Since $\mathcal{F}^{\bullet}$ is bounded above, it could be easily proved by downward induction and repeatedly applying Lemma \ref{lemma: kernel of surjective locally free sheaf map is still locally free}.
\end{proof}


\bibliography{Descentofcohbib}{}
\bibliographystyle{alpha}

\end{document}